\newtheorem{theo+}{Theorem}[section]
\newtheorem{prop+}[theo+]{Proposition}
\newtheorem{coro+}[theo+]{Corollary}
\newtheorem{lemm+} [theo+]{Lemma}
\newtheorem{deep+}  [theo+]  {Deep Result}
\newtheorem{fact+}  [theo+]  {Fact}
\newtheorem{thmalph}{Theorem}
\theoremstyle{definition}
\newtheorem{exam+}  [theo+]  {Example}
\newtheorem{rema+}  [theo+]  {Remark}
\newtheorem{defi+}  [theo+]  {Definition}
\newtheorem{xca+}[theo+]{Exercise}
\newenvironment{theorem}{\begin{theo+}}{\end{theo+}}
\newenvironment{proposition}{\begin{prop+}}{\end{prop+}}
\newenvironment{corollary}{\begin{coro+}}{\end{coro+}}
\newenvironment{lemma}{\begin{lemm+}}{\end{lemm+}}
\newenvironment{remark}{\begin{rema+}}{\end{rema+}}
\numberwithin{equation}{section}
\newcommand\beq{\begin{equation}\label}
\newcommand\eeq{\end{equation}}
\renewcommand\a[1]{{\acute{#1}}}
\renewcommand\b[1]{{\breve{#1}}}
\newcommand\e[1]{{\ddot{#1}}}
\def\draft{\centerline{(Draft {\the \day}/{\the\month} \the \year.)}}
\def\refn#1.#2{\expandafter\def\csname#1\endcsname{[#2]}}
\def\refnr#1.{\csname#1\endcsname}
\def\fa{\mathfrak a}
\def\fe{\mathfrak e}
\def\fg{\mathfrak g}
\def\fgl{\mathfrak {gl}}
\def\fk{\mathfrak k}
\def\fl{\mathfrak l}
\def\fm{\mathfrak m}
\def\fn{\mathfrak n}
\def\fp{\mathfrak p}
\def\fq{\mathfrak q}
\def\fsl{\mathfrak {sl}}
\def\ft{\mathfrak t}
\def\fu{\mathfrak u}
\def\fso{\mathfrak{so}}
\def\fsp{\mathfrak{sp}}
\def\fspin{\mathfrak{spin}}
\def\fsu{\mathfrak{su}}
\def\a{\alpha}
\def\b{\beta}
\def\Claminv2{|C(\Lambda)|^{-2}}
\def\de{d\varepsilon}
\def\Aa2D{A^{\a,2}(D)}
\def\bAa2D{\bar{A^{\a,2}(D)}}
\def\Ab2D{A^{\beta,2}(D)}
\def\bAb2D{\bar{A^{\beta,2}(D)}}
\def\abs#1{\vert#1\vert}
\def\norm#1{\Vert#1\Vert}
\def\Norm#1_#2{\Vert#1\Vert_{#2}}
\def\phipl12{\phi_{p_{l_1}, p_{l_2}}}
\def\phip01{\phi_{p_{0}, p_{0}}}
\def\a{\alpha}
\def\b{\beta}
\def\Claminv2{|C(\Lambda)|^{-2}}
\def\ad{\operatorname{ad}}
\def\Ad{\operatorname{Ad}}
\def\det{\operatorname{det}}
\def\Det{\operatorname{Det}}
\def\Ind{\operatorname{Ind}}
\def\exp{\operatorname{exp}}
\def\sgn{\operatorname{sgn}}
\def\tr{\operatorname{tr}}
\def\Ker{\operatorname{Ker}}
\def\End{\operatorname{End}}
\def\de{d\varepsilon}
\def\Aa2D{A^{\a,2}(D)}
\def\bAa2D{\bar{A^{\a,2}(D)}}
\def\Ab2D{A^{\beta,2}(D)}
\def\bAb2D{\bar{A^{\beta,2}(D)}}
\def\phipl12{\phi_{p_{l_1}, p_{l_2}}}
\def\phip01{\phi_{p_{0}, p_{0}}}
\def\alg/{algebra}
\def\Alg/{Algebra}
\def\alt/{alternative}
\def\anal/{analytic}
\def\analfunc/{\anal/\ \func/}
\def\Ans/{\it Answer. \normal}
\def\ass/{associative}
\def\nass/{non-\ass/}
\def\autom/{automorphism}
\def\homom/{homomorphism}
\def\isom/{isomorphism}
\def\bdd/{bounded}
\def\Bdd/{Bounded}
\def\bddsymdom/{bounded \sym/ \dom/}
\def\Cartdom/{Cartan \dom/}
\def\bdry/{boundary}
\def\bsd/{\bdd/ \symdom/}
\def\bv/{boundary value}
\def\cf/{{\it cf}\.}
\def\Cf/{{\it Cf}\.}
\def\charr/{character}
\def\coeff/{coefficient}
\def\comm/{commutative}
\def\cpct/{compact}
\def\compl/{complex}
\def\comp/{complex}
\def\Comp/{Complex}
\def\conf/{conformal}
\def\conj/{conjugate}
\def\conn/{connect}
\def\cont/{continuous}
\def\conv/{converge} 
\def\convc/{convergence}
\def\convt/{convergent}
\def\convx/{convex}
\def\coord/{coordinate}
\def\lcoord/{local coordinate}
\def\Corr/{Corresponding}
\def\corr/{corresponding}
\def\corrd/{correspond}
\def\cov/{covariant}
\def\decomp/{decomposition}
\def\deco/{decompose}
\def\diff/{different} 
\def\Diff/{Different} 
\def\dimn/{dimension} 
\def\distr/{distribution} 
\def\div/{diverge} 
\def\dom/{domain}
\def\eg/{\hbox{\it e.g}\.}
\def\eigenf/{eigen\-\func/}
\def\eigensp/{eigen\-space}
\def\eigenv/{eigen\-value}
\def\eq/{equation}
\def\equa/{equation}
\def\de/{\diff/ial \equa/}
\def\do/{\diff/ial operator}
\def\ode/{ordinary \de/}
\def\pde/{partial \de/}
\def\pdo/{partial \diff/ial operator}
\def\psdo/{pseudo \diff/ial operator}
\def\fin/{finite}
\def\Ex/{\it Example.\ \normal}
\def\Exnr#1/{\it Example #1.\ \normal}
\def\foll/{follow}
\def\follg/{following}
\def\Follg/{Following}
\def\func/{function}
\def\Func/{Function}
\def\Fonc/{Fonc\-tion}
\def\fonc/{fonc\-tion}
\def\Funk/{Funk\-tion}
\def\funk/{Funk\-tion}
\def\gen/{general}
\def\har/{harmonic}
\def\Hint/{\it Hint. \normal}
\def\hist/{historic}
\def\histcl/{historical}
\def\hol/{holo\-morphic}
\def\homog/{ho\-mo\-ge\-ne\-ous}
\def\hyp/{hyper\-bolic}
\def\hyperg/{hyper\-geometric}
\def\ie/{\hbox{\it i.e.}}
\def\iff/{if and only if}
\def\ineq/{inequality}
\def\infra/{{\it inf\-ra}}
\def\ultra/{{\it ult\-ra}}
\def\Inpart/{In particular}
\def\inpart/{in particular}
\def\instof/{instead of}
\def\interps/{interpolation space}
\def\interp/{interpolation}
\def\Interp/{Interpolation}
\def\interpr/{Interpretation}
\def\Intr/{Introduction}
\def\intv/{interval}
\def\inv/{invariant}
\def\invc/{invariance}
\def\Iowords/{In other words}
\def\iowords/{in other words}
\def\ipr/{inner product}
\def\irred/{irreducible}
\def\lb/{line bundle}
\def\lin/{linear}
\def\lhs/{left hand side}
\def\rhs/{right hand side}
\def\loc/{local}
\def\math/{mathematic}
\def\mathcn/{\math/ian}
\def\manif/{manifold}
\def\meas/{measure}
\def\measl/{measurable}
\def\mero/{mero\-morphic}
\def\mon/{monomial}
\def\monog/{monogenic}
\def\mult/{multiple}
\def\multy/{multiply}
\def\multn/{multiplication}
\def\nas/{necessary and sufficient}
\def\nbd/{neighborhood}
\def\neg/{negative}
\def\nondeg/{nondegenerate}
\def\Oohand/{On the other hand}
\def\oohand/{on the other hand}
\def\Oonhand/{On the one hand}
\def\oonhand/{on the one hand}
\def\oper/{operator}
\def\orth/{ortho\-gonal}
\def\orthon/{ortho\-normal}
\def\otoh/{on the other hand}
\def\quat/{quaternion}
\def\pp/{\hbox{a. e.}}
\def\psh/{plurisubharmonic}
\def\pol/{polynomial}
\def\pot/{potential}
\def\pos/{positive}
\def\princ/{principle}
\def\prob/{probability}
\def\proj/{projective}
\def\projn/{projection}
\def\Proof/{\it Proof:\normal}
\def\Rem/{\it Remark\normal}
\def\Remnr#1/{\it Remark\ \normal #1. }
\def\rep/{representation}
\def\reps/{representations}
\def\meta/{metaplectic representation}
\def\repr/{reproducing}
\def\reprker/{reproducing kernel}
\def\resp/{respective} 
\def\resply/{respectively}
\def\restr/{restriction}
\def\sa/{self-adjoint}
\def\st/{such that}
\def\sol/{solution}
\def\ru/{space}
\def\sph/{spherical}
\def\ssp/{sub\ru/}
\def\sym/{symmetric}
\def\Sym/{Symmetric}
\def\symb/{symbol}
\def\symbc/{symbolic}
\def\symdom/{\sym/ domain}
\def\symp/{symplectic}
\def\Theor#1/{\fet Theorem #1.\ \normal}
\def\Lem#1/{\fet Lemma #1.\ \normal}
\def\Lemma/{\fet Lemma.\ \normal}
\def\topl/{topology}
\def\topll/{topological}
\def\transf/{transform}
\def\transl/{translation}
\def\transfn/{transformation}
\def\transv/{transvectant}
\def\trig/{trigonometric}
\def\tril/{trilinear}
\def\trilf/{trilinear form}
\def\uhp/{upper halfplane}
\def\uhs/{upper halfspace}
\def\vb/{vector bundle}
\def\vf/{vector field}
\def\vsp/{vector space}
\def\wrt/{with respect to}
\def\Wlog/{Without loss of generality}
\def\a{\alpha}
\def\e{\varepsilon}
\def\Ab/{Abel}
\def\Ban/{Banach}
\def\Bansp/{\Ban/ space}
\def\Belt/{Bel\-tra\-mi}
\def\Berg/{Berg\-man}
\def\Bern/{Ber\-nou\-lli}
\def\Berz/{Berezin}
\def\Bess/{Bessel}
\def\Cart/{Car\-tan}
\def\Cay/{Cay\-ley}
\def\CG/{Clebsch-Gordan}
\def\Cl/{Clifford}
\def\CR/{Cauchy-Rie\-mann}
\def\Dir/{Dirichlet}
\def\Eucl/{Euclide}
\def\Eucln/{Euclidean}
\def\F/{Fourier}
\def\Hank/{Hankel}
\def\Hankf/{\Hank/ form}
\def\Herm/{Hermite}
\def\Hilb/{Hilbert}
\def\Hilbs/{Hilbert space}
\def\Hilbsp/{Hilbert space}
\def\Lag/{La\-grange}
\def\Lap/{La\-place}
\def\LapBelt/{\Lap/-\Belt/}
\def\Leb/{Lebesgue}
\def\Marc/{Mar\-cin\-kie\-wicz}
\def\Moeb/{Moebius}
\def\Moebt/{Moebius transformation}
\def\Moebtransfn/{Moebius transformation}
\def\Pla/{Plan\-che\-rel}
\def\Poin/{Poin\-car\'e}
\def\Riem/{Rie\-mann}
\def\Riemn/{\Riem/ian}
\def\psRiemn/{pseudo-\Riem/ian}
\def\Riems/{Rie\-mann surface}
\def\Schroe/{Schr\"odinger}
\def\Weier/{Weier\-strass}
\def\im{\operatorname{Im}}
\def\anal/{analytic}
\def\bsd/{bounded symmetric domain  }
\def\bdd/{bounded}
\def\calc/{calculation}\def\conj{conjugate}
\def\calci/{calculating}\def\eg{e.g.}
\def\conj/{conjugate}
\def\deco/{decomposition}
\def\eg/{e.g.}
\def\fct/{function}
\def\gp/{group}
\def\hw/{highest weight}
\def\hwv/{highest weight vector}
\def\hwvs/{highest weight vectors}
\def\lw/{lowest weight}
\def\lwv/{lowest weight vector}
\def\lwvs/{lowest weight vectors}
\def\hds/{holomorphic discrete series}
\def\iff/{if and only if}
\def\inv/{invariant}
\def\irrde/{irreducible decomposition}
\def\meas/{measure}
\def\transf/{transform}
\def\rep/{representation}
\def\resp/{respectively}
\def\inters/{intertwines}
\def\interg/{intertwining}
\def\meta/{metaplectic representation}
\def\qu/{quaternion}
\def\rep/{representation}
\def\symdom/{ symmetric domain}
\def\st/{such that}
\def\shd/{subhead}
\def\transf/{transform}
\def\wrt/{with respect to}
\def\Norm#1#2#3{\Vert#1\Vert^{#3}_{{#2}}}
\def\tr{\operatorname{tr}}
\newcommand*\pFq[6][8]{%
	\begingroup 
	\pFqmuskip=#1mu\relax
	\mathchardef\normalcomma=\mathcode`,
	\mathcode`\,=\string"8000
	\begingroup\lccode`\~=`\,
	\lowercase{\endgroup\let~}\pFqcomma
	{}_{#2}F_{#3}{\left(\genfrac..{0pt}{}{#4}{#5};#6\right)}%
	\endgroup
}
\newcommand{\pFqcomma}{{\normalcomma}\mskip\pFqmuskip}
\DeclareMathOperator{\gl}{\mathfrak{gl}}
\DeclareMathOperator{\SL}{SL}
\let\sl\relax
\DeclareMathOperator{\sl}{\mathfrak{sl}}
\DeclareMathOperator{\upO}{O}
\DeclareMathOperator{\SO}{SO}
\DeclareMathOperator{\so}{\mathfrak{so}}
\DeclareMathOperator{\Sp}{Sp}
\DeclareMathOperator{\Mp}{Mp}
\let\sp\relax
\DeclareMathOperator{\sp}{\mathfrak{sp}}
\DeclareMathOperator{\SU}{SU}
\DeclareMathOperator{\su}{\mathfrak{su}}
\DeclareMathOperator{\spin}{\mathfrak{spin}}
\DeclareMathOperator{\upU}{U}
\newcommand{\fraka}{\mathfrak{a}}
\newcommand{\frake}{\mathfrak{e}}
\newcommand{\frakg}{\mathfrak{g}}
\newcommand{\frakk}{\mathfrak{k}}
\newcommand{\frakl}{\mathfrak{l}}
\newcommand{\frakm}{\mathfrak{m}}
\newcommand{\frakn}{\mathfrak{n}}
\newcommand{\frakp}{\mathfrak{p}}
\newcommand{\fraks}{\mathfrak{s}}
\newcommand{\frakt}{\mathfrak{t}}
\newcommand{\fraku}{\mathfrak{u}}
\newcommand{\CC}{\mathbb{C}}
\newcommand{\NN}{\mathbb{N}}
\newcommand{\RR}{\mathbb{R}}
\newcommand{\ZZ}{\mathbb{Z}}
\newcommand{\calC}{\mathcal{C}}
\newcommand{\calD}{\mathcal{D}}
\newcommand{\calE}{\mathcal{E}}
\newcommand{\calF}{\mathcal{F}}
\newcommand{\calH}{\mathcal{H}}
\newcommand{\calP}{\mathcal{P}}
\newcommand{\calS}{\mathcal{S}}
\DeclareMathOperator{\HS}{HS}
\DeclareMathOperator{\Hom}{Hom}
\DeclareMathOperator{\const}{const}
\DeclareMathOperator{\met}{met}
\renewcommand\Re{\operatorname{Re}}
\renewcommand\Im{\operatorname{Im}}
\newcommand{\cpt}{\textup{cpt}}
\DeclareMathOperator{\otimeshat}{\hat{\otimes}}
\begin{document}
	
\title[Heisenberg parabolically induced representations of Hermitian Lie groups, Part I]{Heisenberg parabolically induced representations of Hermitian Lie groups, Part I: Intertwining operators and Weyl transform}

\begin{abstract}
	For a Hermitian Lie group $G$, we study the family of
        representations induced from a character of the maximal
        parabolic subgroup $P=MAN$ whose unipotent radical $N$ is a
        Heisenberg group. Realizing these representations in the
        non-compact picture on a space $I(\nu)$ of functions on the
        opposite unipotent radical $\bar{N}$, we apply the Heisenberg
        group Fourier transform
        mapping functions on $\bar N$
          to operators on Fock spaces. The main result is an explicit
        expression for the Knapp--Stein intertwining operators
        $I(\nu)\to I(-\nu)$ on the Fourier transformed side.
        This gives a new construction of the complementary series and
        of certain unitarizable subrepresentations at points of
        reducibility.
	Further auxiliary results are a Bernstein--Sato identity for the Knapp--Stein kernel on $\overline{N}$ and the decomposition of the metaplectic representation under the non-compact group $M$.
\end{abstract}

\keywords{Hermitian Lie groups, Heisenberg parabolic subgroups, induced representations, complementary series, unitarizable subrepresentations}

\subjclass[2010]{17B15, 17B60, 22D30, 43A80, 43A85}

\author{Jan Frahm}
\address{Department of Mathematics, Aarhus University, Ny Munkegade 118, 8000 Aarhus, Denmark}
\email{frahm@math.au.dk}

\author{Clemens Weiske}
\address{Mathematical Sciences, Chalmers University of Technology and Mathematical Sciences, G\"oteborg University, SE-412 96 G\"oteborg, Sweden}
\email{weiske@chalmers.se}

\author{Genkai Zhang}
\address{Mathematical Sciences, Chalmers University of Technology and Mathematical Sciences, G\"oteborg University, SE-412 96 G\"oteborg, Sweden}
\email{genkai@chalmers.se}

\thanks{The first named author was supported by a research grant from the Villum Foundation (Grant No. 00025373), the second named author was supported by a research grant from the Knut and Alice Wallenberg foundation (KAW 2020.0275) 
and the third named author was supported partially by the Swedish Research Council (VR Grants 2018-03402, 2022-02861)}

\maketitle

\section*{Introduction}

In the representation theory of real reductive groups, a central role is played by parabolically induced representations. These are families of representations induced from a parabolic subgroup which depend on one or several complex parameters. While representations induced from a minimal parabolic subgroup are of importance for instance in the abstract classification of irreducible representations and in Harish-Chandra's Plancherel formula, representations induced from maximal parabolic subgroups are fundamental in the construction and analysis of more singular representations such as the minimal representation. In the case of maximal parabolic subgroups, the families of representations depend only on one complex parameter and the natural problems in this context are to determine those parameters for which the representations are irreducible and unitarizable, and to find irreducible unitarizable subquotients at points of reducibility. The simplest types of maximal parabolic subgroups are those with abelian unipotent radical, and the corresponding parabolically induced representations have been studied in great details by various authors using algebraic methods, see \cite{Joh90,Joh92,MS14,OZ95,Sah93,Sah95,Zha95}. However, for some purposes it is necessary to have analytic realizations of these representations. Using the so-called non-compact picture, the induced representations can be realized on functions on the unipotent radical, and the Euclidean Fourier transform on this abelian group has proven to provide interesting realizations of some small subquotients of the parabolically induced representations on spaces of $L^2$-functions, see \cite{BSZ06,DS99,DS03,KO03c,MS17,Sah92,VR76}.

Another class of maximal parabolic subgroups is the one where the unipotent radicals are Heisenberg groups, and it is natural to extend both the algebraic and the analytic results
above. For some special cases, similar algebraic methods were applied to obtain results about reducibility and unitarizability \cite{Farmer81,Fuj01,HT93}. More recently, the third author gave a systematic algebraic treatment for the class of Heisenberg parabolically induced representations of Hermitian Lie groups~\cite{Zha22}. In this work, we complement these results by the analytic picture. More precisely, we apply the Heisenberg group Fourier transform to the non-compact picture of the induced representations, and obtain an explicit description of the invariant Hermitian form. This extends earlier results of Cowling~\cite{Cow82} for the case $G=\SU(1,n)$ and complements recent work of the first author \cite{Fra22} for the case of non-Hermitian groups. In particular, we obtain a new description of the complementary series and some unitarizable subquotients. This description will be used in a subsequent work~\cite{FWZ} to study branching rules with respect to certain non-compact subgroups.

\subsection*{Statement of the results}

Every simple Hermitian Lie group $G$ possesses a unique (up to conjugation) parabolic subgroup $P=MAN$ whose unipotent radical $N$ is a Heisenberg group. The characters of $A$ are given by $e^\nu$ with $\nu\in(\mathfrak{a}^\CC)^*$ a functional on the (one-dimensional) complexified Lie algebra of $A$. Extending such a character to a character $1\otimes e^\nu\otimes1$ of $P=MAN$ by letting $M$ and $N$ act trivially, we define $\pi_\nu$ to be the (smooth normalized) parabolically induced representation
$$ \Ind_P^G(1\otimes e^\nu\otimes1) \qquad (\nu\in(\mathfrak{a}^\CC)^*). $$
The \emph{non-compact picture} is a realization of $\pi_\nu$ on a
space
$I(\nu)\subseteq C^\infty(\bar{N})$, where $\bar{N}$ is the opposite unipotent radical. Since $\bar{N}$ is a Heisenberg group, we can identify $\bar{N}\simeq V_1\times\RR$ where $V_1$ is a symplectic vector space. In many of the computations in this paper, we use that $V_1$ carries the additional structure of a Jordan triple system. More precisely, the Hermitian symmetric space $G/K$ corresponding to $G$ can be realized as a bounded symmetric domain in a larger Jordan triple system $V$, and $V_1$ occurs in the Peirce decomposition $V=V_2\oplus V_1\oplus V_0$ of $V$ with respect to a minimal tripotent (see Section~\ref{sec:HermGrpHeisenbergParabolic} for details).

The natural $L^2$-pairing on $\bar{N}$ with respect to the Haar measure defines a sesqui-linear form $I(\nu)\times I(-\bar{\nu})\to\CC$ which is invariant under $\pi_\nu\otimes\pi_{-\bar{\nu}}$. For purely imaginary $\nu\in i\fraka^*$ this induces a positive definite invariant Hermitian form on $I(\nu)$, so the representation $\pi_\nu$ is unitary. The key ingredient to study reducibility and unitarity for other parameters is a family of intertwining operators $A_\nu:I(\nu)\to I(-\nu)$ called Knapp--Stein operators. Combining them with the pairing above gives an invariant Hermitian form on $I(\nu)$ for all real $\nu\in\fraka^*$, and the main problem is to decide whether this pairing is positive (semi)definite. In the non-compact picture, this is equivalent to a spectral decomposition of the intertwining operator $A_\nu$. Since $A_\nu$ is a convolution operator on the Heisenberg group $\bar{N}$, this is a classical problem in non-commutative harmonic analysis and hence interesting in its own right.

To illustrate the problem and its difficulty we consider the case where $G$ is the rank one Hermitian Lie group $\SU(1,d+1)$. The Knapp--Stein intertwining operator $A_\nu$ is a convolution operator on $\bar{N}=V_1\times\RR$, $V_1\simeq\CC^d$, with the kernel
$$ u_\alpha(z,t) = (|z|^4 + t^2)^\alpha \qquad ((z,t)\in\CC^d\times\RR). $$
Convolution is turned into multiplication by the Heisenberg group Fourier transform of $\bar{N}$, and this Fourier transform decomposes the space $L^2(\bar{N})$ as
$$ L^2(\bar{N})=L^2(V_1\times\RR) \simeq \int_{\RR^\times} e^{i\lambda} (\mathcal F_\lambda(V_1)\otimes \mathcal F_\lambda(V_1)^\ast)\,d\lambda, $$
the isomorphism being defined by the Weyl transform $\sigma_{\lambda}:
L^2(V_1)\to \mathcal F_\lambda(V_1)\otimes \mathcal
F_\lambda(V_1)^\ast$ onto the space of Hilbert--Schmidt operators on
the Fock space $\mathcal F_\lambda(V_1)=\calF_\lambda(\CC^d)$, which
carries an irreducible unitary representation of $\bar{N}$. Since the
Knapp--Stein kernel is invariant under $M=\upU(d)$, its Weyl transform
will respect the decomposition of $\mathcal F_\lambda(\mathbb C^d)$ under $M=\upU(d)$, acting by the restriction of the metaplectic representation. This decomposition is a direct sum of all subspaces of homogeneous polynomials of a fixed degree, and the Weyl transform of the Knapp--Stein kernel is a diagonal operator with respect to this decomposition. The corresponding eigenvalues have been calculated by Cowling~\cite{Cow82} using explicit computations.

Now, when $G$ has higher rank the Knapp--Stein kernel is of the form
$$ u_\alpha(z,t)=(t^2-\Omega(z))^\alpha \qquad ((z,t)\in V_1\times\RR), $$
where $-\Omega(z)$ is a non-negative,
in most cases irreducible, quartic polynomial.
The kernel $u_\alpha(z,t)$
is much more intricate than
the rank one case, where  $t^2-\Omega(z) =(t^2 +|z|^4)=(it +|z|^2)(-it +|z|^2)$.
The subgroup $M$ still acts on the Fock space $\calF_\lambda(V_1)$ via
the restriction of the metaplectic representation to $M$. Our first
main result is a decomposition of $\mathcal F_\lambda(V_1)$ into
irreducible representations of $M$. Note that unless
$\frakg\simeq\su(1,d+1)$, the group $M$ is non-compact, 
so the representations appearing are infinite dimensional. The case $\frakg=\sp(n,\RR)$ is excluded since here $\Omega\equiv0$ (see Section~\ref{sec:symplectic_case} for more details).

\begin{thmalph}[{see Theorems~\ref{thm:M_decomposition} and \ref{thm:MetaplecticRestrictionMSU(p,q)}}]\label{thm:A}
Assume $\frakg\not\simeq\sp(n,\RR)$, then the representation $\calF_\lambda(V_1)$ of $M$ decomposes discretely into a multiplicity-free direct sum of unitary highest weight representations:
$$ \calF_\lambda(V_1) = \bigoplus_k\calF_{\lambda,k}(V_1), $$
where the summation is over $k\geq0$ for $\frakg\not\simeq\su(p,q)$ and over $k\in\ZZ$ if $\frakg\simeq\su(p,q)$. The highest weight of $\calF_{\lambda,k}(V_1)$ and a highest weight vector are given in Theorems~\ref{thm:M_decomposition} and \ref{thm:MetaplecticRestrictionMSU(p,q)}.
\end{thmalph}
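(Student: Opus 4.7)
The plan is to realize the decomposition as an instance of Howe's theory of reductive dual pairs. The Levi factor $M$ embeds into $\Sp(V_1,\RR)$ via its adjoint action on the symplectic space $V_1$, so the restriction of the oscillator representation on $\calF_\lambda(V_1)$ is what we need to decompose. The strategy is to identify a subgroup $M'\subset\Sp(V_1,\RR)$ commuting with $M$ so that $(M,M')$ is a reductive dual pair in which $M'$ is compact; Howe duality then yields a multiplicity-free decomposition
\begin{equation*}
\calF_\lambda(V_1)\;\simeq\;\bigoplus_{\pi'\in\widehat{M'}}\theta(\pi')\otimes\pi',
\end{equation*}
where each non-zero $\theta(\pi')$ is an irreducible unitary highest weight representation of $M$. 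The index set for the pieces $\calF_{\lambda,k}(V_1)$ in the statement is then read off from $\widehat{M'}$.

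The second step is the case-by-case identification of $M'$ using the Jordan triple structure on $V=V_2\oplus V_1\oplus V_0$. When $\frakg\simeq\su(p,q)$, the centre of $M$ endows $V_1$ with a complex structure preserved by the semisimple part of $M$; the corresponding dual partner is $M'\simeq\upU(1)$ acting on $V_1$ by scalar multiplication, and its unitary dual $\widehat{\upU(1)}\cong\ZZ$ produces the stated $\ZZ$-indexing, realized concretely as a bigrading of $\calF_\lambda(V_1)$ by holomorphic minus antiholomorphic degree. For the other Hermitian types ($\so^*(2n)$, $\so(2,n)$, $\frake_{6(-14)}$, $\frake_{7(-25)}$), the Jordan structure on $V_1$ supplies a compatible additional structure (quaternionic or real orthogonal) whose automorphism group provides a compact dual partner $M'$; in each case the irreducible representations of $M'$ occurring in the correspondence are labelled by a single parameter $k\geq 0$, yielding the non-negative indexing.

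The third step is to exhibit explicit highest weight vectors inside each $\calF_{\lambda,k}(V_1)$. Working in the polynomial model of the Fock space, these are built as distinguished polynomials from the Jordan triple data on $V_1$---typically powers of the generic norm on a Peirce subspace, or determinantal polynomials attached to a tripotent frame. Their weights with respect to a Cartan subalgebra of $\frakm$ are computed from the known action of $\frakm$ on $V_1$ together with the central character $\lambda$ of the metaplectic representation, matching the explicit formulas in Theorems~\ref{thm:M_decomposition} and \ref{thm:MetaplecticRestrictionMSU(p,q)}. The main obstacle is the uniform identification of the correct dual pair $M'$ across the four Hermitian families and the verification that each non-zero $\theta(\pi')$ is indeed an irreducible unitary highest weight module for $M$; once this is established, discreteness and multiplicity-freeness follow directly from Howe duality, and the split of the result into two separate theorems reflects exactly the dichotomy between the $\upU(1)$-grading in the $\su(p,q)$ case and the $k\geq 0$-indexing in all other cases.
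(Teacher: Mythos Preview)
Your dual pair strategy works cleanly for $\frakg\simeq\su(p,q)$, $\so^*(2n)$ and $\so(2,n)$: there $\frakm$ is a product with one factor $\fraku(1)$, $\su(2)$ or $\so(n-2)$, and $(M_{\mathrm{noncpt}},M_{\mathrm{cpt}})$ is a genuine reductive dual pair in $\Sp(V_1,\RR)$. The paper itself remarks on this connection (Appendix~\ref{app:explicit_decomp}). But the argument breaks down for the two exceptional cases $\frakg\simeq\frake_{6(-14)}$ and $\frake_{7(-25)}$. There $\frakm=\su(1,5)$ resp.\ $\so(2,10)$ is \emph{simple}, and it acts on $V_1$ through $\bigwedge^3\CC^6$ resp.\ the half-spin representation of $\so(12,\CC)$---not through a standard representation. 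No compact $M'$ exists with $(M,M')$ a reductive dual pair in Howe's sense: the centralizer of a diagonal $\upU(1)$ in $\Sp(V_1,\RR)$ is the compact $\upU(d_1)$, which cannot contain the non-compact $M$, and there is no quaternionic or orthogonal structure on $V_1$ commuting with $M$ either. Your sentence ``the Jordan structure on $V_1$ supplies a compatible additional structure\dots whose automorphism group provides a compact dual partner'' is precisely the missing step, and it is false for these two algebras.

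The paper avoids this problem by arguing uniformly without dual pairs. It first decomposes the polynomial Fock space under the \emph{compact} group $L=K\cap M$ via the Hua--Kostant--Schmid theorem (Lemma~\ref{prop:HKS}), obtaining explicit $L$-highest weight vectors $\Delta_{m_1,m_2}=\Det_1^{m_1-m_2}\Det_2^{m_2}$. It then checks directly, using the formula of Lemma~\ref{lem:MetaplecticLieAlgAction} for the $\frakp_\frakm^+$-action and the Capelli--Cayley identity for $\Det_2(\partial)$, which of these are annihilated by $\frakp_\frakm^+$; the answer is exactly $m_2=0$, giving the $k\ge 0$ family $\Det_1^k$. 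This argument uses only that $V_1$ is a rank-two Jordan triple and so covers the exceptional cases on equal footing with the classical ones.
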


Since the Knapp--Stein kernel $u_\alpha$ is $M$-invariant, its Weyl transform acts on each $\calF_{\lambda,k}(V_1)$ by a scalar, thanks to Schur's Lemma. Our second main result is an explicit formula for these eigenvalues:

\begin{thmalph}[{see Theorem~\ref{thm:eigenvalue_formula}}]
The eigenvalues of the Weyl transform $\sigma_\lambda(u_\alpha)$ of the Knapp--Stein kernel $u_\alpha$ are for $\frakg\not\simeq\sp(n,\RR),\su(p,q)$ given by
$$ \sigma_\lambda(u_\alpha)|_{\calF_{\lambda,k}(V_1)} = \const\times\frac{2^{2\alpha-1}(-\alpha-b_1-1)_k\Gamma(\alpha+\frac{a_1+2}{2})\Gamma(\alpha+\frac{d_1+1}{2})}{\Gamma(-\alpha)\Gamma(\alpha+a_1+b_1+2+k)}|\lambda|^{-2\alpha-d_1-1} \qquad (k\geq0) $$
and for $\frakg=\su(p,q)$ by
$$ \sigma_\lambda(u_\alpha)|_{\calF_{\lambda,k}(V_1)} = \const\times\frac{(-1)^k\Gamma(\alpha+1)\Gamma(2\alpha+p+q-1)}{\Gamma(-\alpha)\Gamma(\alpha+p+k)\Gamma(\alpha+q-k)}|\lambda|^{-2\alpha-d_1-1} \qquad (k\in\ZZ), $$
where $(d_1,a_1,b_1)$ are the structure constants of the Jordan triple system $V_1$ (see Table~\ref{tab:2}).
\end{thmalph}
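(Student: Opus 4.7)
The plan is to combine $M$-equivariance of the Weyl transform with a Bernstein--Sato recursion and a single base-case integral. First, the $M$-invariance of the kernel $u_\alpha$ together with Theorem~A and Schur's lemma forces $\sigma_\lambda(u_\alpha)$ to act on each summand $\calF_{\lambda,k}(V_1)$ by a scalar, say $c_k(\alpha,\lambda)$. The Heisenberg dilation $\delta_s:(z,t)\mapsto(sz,s^2t)$ rescales $u_\alpha$ by $s^{4\alpha}$ (as $\Omega$ is quartic) and implements an intertwiner between $\sigma_\lambda$ and $\sigma_{s^{-2}\lambda}$ on the Fock side up to a Jacobian. A scaling argument then yields
$$ c_k(\alpha,\lambda) = |\lambda|^{-2\alpha-d_1-1}\,c_k(\alpha), $$
reducing the problem to the determination of the one-variable function $c_k(\alpha)$.

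For the $\alpha$-dependence I would invoke the Bernstein--Sato identity $Du_\alpha=b(\alpha)u_{\alpha-1}$ for an $M$-invariant differential operator $D$ on $\bar N$ (announced in the abstract). Transporting $D$ via the Weyl transform and applying Schur once more shows that it acts on each $\calF_{\lambda,k}(V_1)$ by a scalar $\mu_k(\alpha)$, producing the recursion
$$ c_k(\alpha-1) = \frac{\mu_k(\alpha)}{b(\alpha)}\,c_k(\alpha). $$
Iterating this recursion and identifying the product as a ratio of $\Gamma$-values produces both the $\alpha$-dependent $\Gamma$-factors in the answer and, through the $k$-dependence of $\mu_k(\alpha)$, the Pochhammer symbol $(-\alpha-b_1-1)_k$ together with the $k$-shifted $\Gamma(\alpha+a_1+b_1+2+k)$ in the denominator.

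The remaining $k$-independent prefactor would then be fixed by evaluating $\sigma_\lambda(u_\alpha)$ directly on the vacuum $v_0\in\calF_{\lambda,0}(V_1)$: performing the $t$-integration first gives the classical Bessel transform of $(t^2+a^2)^\alpha$, producing a Macdonald function $K_{-\alpha-1/2}(|\lambda|\sqrt{-\Omega(z)})$, and the remaining integral against the Gaussian matrix coefficient $\langle\pi_\lambda(z,0)v_0,v_0\rangle$ over $V_1$ can be evaluated via the polar decomposition on the Jordan cone together with the Gindikin--Koecher Gamma integral; the structure constants of this integral naturally deliver the factors $\Gamma(\alpha+\frac{a_1+2}{2})\Gamma(\alpha+\frac{d_1+1}{2})$ and the explicit power of $2$.

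The main obstacle is the Bernstein--Sato step: identifying the correct $M$-invariant operator $D$ on $\bar N$ and computing its spectrum $\mu_k(\alpha)$ on each $\calF_{\lambda,k}(V_1)$, where the expected shift by $-b_1-1$ reflects the Peirce multiplicity $b_1$ of $V_1$ under the fixed rank-one tripotent. The case $\frakg\simeq\su(p,q)$ has to be handled separately, since there $V_1$ decomposes into two non-equivalent $M$-summands and the highest weight vectors are indexed by $k\in\ZZ$ rather than $k\geq 0$; the recursion then collapses to a simpler one-variable form producing just the single pair $\Gamma(\alpha+1)\Gamma(2\alpha+p+q-1)$, and the factor $(-1)^k$ emerges from the sign parity of the two sectors under complex conjugation in one of the two variables.
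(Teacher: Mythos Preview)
Your steps 1 and 2 (Schur's lemma plus Heisenberg dilation) coincide with the paper's Lemma~1.5. The divergence is in step~3, and there your argument has a genuine gap.

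The Bernstein--Sato relation gives a recursion in $\alpha$: for each \emph{fixed} $k$ it ties $c_k(\alpha)$ to $c_k(\alpha-1)$, but it never links $c_k$ to $c_{k'}$ for $k\neq k'$. Iterating therefore determines only the ratio $c_k(\alpha)/c_k(\alpha_0)$, with the initial datum $c_k(\alpha_0)$ still an unknown function of $k$. Your base-case computation on the vacuum $v_0\in\calF_{\lambda,0}(V_1)$ supplies $c_0(\alpha)$ alone and says nothing about $c_k(\alpha_0)$ for $k\geq1$, so the system remains underdetermined. You assert that the $k$-dependence ``comes through $\mu_k(\alpha)$'', but that only governs the $k$-dependence of the \emph{ratio}, not of the constant of integration; to close the argument you would need a specific $\alpha_0$ at which $c_k(\alpha_0)$ is known for all $k$ simultaneously (for instance the residue point $\tilde u_{-(d_1+1)/2}=\delta$, where $\sigma_\lambda(\delta)=\mathrm{Id}$), and then a verification that the product $\prod_j(\mu_k+c_{\alpha-j}\lambda^2)/d_{\alpha-j}$ actually reproduces the two separate $k$-dependent pieces $(-\alpha-b_1-1)_k$ and $\Gamma(\alpha+a_1+b_1+2+k)^{-1}$. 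Neither ingredient is supplied.

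The paper avoids this entirely by running the recursion in $k$ rather than in $\alpha$. Since $A_\nu$ intertwines $d\pi_\nu(\frakn)$ with $d\pi_{-\nu}(\frakn)$, applying the relation for $n_{v_1}^{(1,0)}\in\frakn^\CC$ to test functions $f_k$ whose Weyl transforms are the rank-one operators $\zeta_0\otimes\zeta_k^*$ yields (Theorem~3.5) the shift $L(\nu,k+1)\,\calE(\alpha,k+1)=L(-\nu,k+1)\,\calE(\alpha,k)$ with $L(\nu,k+1)=\nu+a_1+1+2k$. This reduces everything to the single case $k=0$, which is then computed by polar coordinates on $V_1$ and an explicit ${}_3F_2$ evaluation (Proposition~3.7). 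The Bernstein--Sato identity appears only later (Section~5) as an independent byproduct; it is not used in the proof of the eigenvalue formula.
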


When $\fg=\fsu(1, d+1)$, the eigenvalues can be explicitly computed using the definition of the Weyl transform and the result is obtained by evaluating Gamma integrals. This technique can be generalized to $\fg=\fsu(p, q)$, but it does not work for the other cases. However, it is possible to compute the eigenvalue for $k=0$ explicitly (see Proposition~\ref{prop:integral_formula}). Using the explicit formulas for the Lie algebra action of $\fn$ in the non-compact picture obtained by the first author~\cite{Fra22}, we derive a recursion formula for the eigenvalues (see Theorem~\ref{thm:recursion}). Combining the recursion with the formula for $k=0$ shows the claimed identities.

Using the explicit formulas for the eigenvalues, we are able to give new proofs for the existence of complementary series representations, i.e. representations $\pi_\nu$ which are both irreducible and unitarizable (see Theorem~\ref{thm:ComplSer}), and we are also able to identify certain unitarizable subrepresentations (see Theorem~\ref{thm:UnitarizableSubs}) and relate them to systems of conformally invariant differential operators of order two on the Heisenberg group as constructed by Barchini--Kable--Zierau~\cite{BKZ08} (see Theorem~\ref{thm:KernelConfInvSystem}). In particular, this sheds new light onto a construction of Wang~\cite{Wan05} of a representation of $\SU(p,q)$ using CR geometry (see Remark~\ref{rem:wang_rep}). As part of our analytic study of the Knapp--Stein kernel, we further find a Bernstein--Sato type formula for the kernel $u_\alpha$ (see Theorem~\ref{thm:BS_identity}). This might have some independent interest, for instance for the construction of fundamental solutions of certain differential operators on Heisenberg groups (see Corollary~\ref{cor:FundSol}).

\subsection*{Relation to other work}

The study of Heisenberg parabolic subgroups $P=MAN$ of a simple Lie group $G$ is also of geometric interest. This is because the homogeneous space $G/P=K/(M\cap K)$ has a parabolic structure, namely the tangent bundle has a $G$-invariant decomposition as $T_x(G/P)=\bar{\fn}_1+\bar{\fn}_2$ with $[X, Y]\in\bar{\fn}_2$ for sections of $\bar{\fn}_1$ locally near each $x\in G/P$ and a $K$-invariant CR-structure on $\bar{\fn}_1$. They have been classified in \cite{Che87}. The Lie algebra $\bar{\fn}_1$ has a symplectic structure and the connected component of the group $M$ acts on $\bar{\fn}_1$ preserving the symplectic form. The related geometry of this action has been studied in detail in \cite{SS15}. When $\fg$ is of real rank one, only $\fg=\fsu(1, d+1)$ has a Heisenberg parabolic subalgebra, the other rank one Lie algebras have either Heisenberg-type parabolic subalgebras, or just an abelian subalgebra when $\fg=\fso(1, d+1)$.

The corresponding Heisenberg parabolically induced representations for
$\fg=\fsu(1, d+1)$ have been studied in detail in the compact
realization by Johnson--Wallach~\cite{JW77} and in the non-compact
realization by Cowling \cite{Cow82}. For other families of groups,
further results about the compact picture using algebraic methods were
obtained in \cite{Farmer81,Fuj01,HT93}. In a recent paper
\cite{Zha22}, the third author has studied these representations for
all Hermitian groups in the compact realization. In a paper to appear
this is extended to quaternionic  Lie groups of real rank $4$.
For the case of non-Hermitian groups, the first author recently carried out a detailed analysis of one particular subrepresentation of a Heisenberg parabolically induced representation, the minimal representation (see \cite{Fra22}). By realizing the minimal representation in the non-compact picture and applying the Heisenberg group Fourier transform, he obtained a new realization on a space of $L^2$-functions.

\subsection*{Notation}

For convenience, we give a list of notation used in this paper.

\begin{itemize}
	\item $D=G/K$ is a bounded symmetric domain of rank $r$ realized as a Jordan triple system $V=\CC^d$ with Jordan triple product $\{x,\bar{y},z\}=D(x,\bar{y})z=Q(x,z)\bar{y}$ and Jordan characteristic $(a,b)$ where $d=r+\frac{a}{2}r(r-1)+rb$ (see Table~\ref{tab:2})
	\item The corresponding $\Hom(V,\bar{V})$-valued quadratic form $Q$ given by $Q(x)=\frac{1}{2}Q(x,x)$.
	\item $e=e_1$ is a fixed minimal tripotent in $V$.
	\item $V=V_2\oplus V_1\oplus V_0=V_2(e)\oplus V_1(e)\oplus V_0(e)$ is the Peirce decomposition with respect to $e$.
	\item $V_1=\CC^{d_1}$ is a itself a Jordan triple system. The Jordan characteristic $(a_1,b_1)$, dimension $d_1$ and rank are given in Table~\ref{tab:2}.
	\item Holomorphic vector fields on $D$ are given for $v \in V$ by $\xi_v$, where $\xi_v(z)=v-Q(z)\bar{v}$.
	\item $\mathfrak{g}=\mathfrak{k}\oplus\mathfrak{p}$ is the Cartan decomposition of $\mathfrak{g}$, where $\mathfrak{p}=\{ \xi_v; v\in V  \}$.
	\item $\mathfrak{g}^\CC=\mathfrak{p}^-\oplus\mathfrak{k}^\CC\oplus\mathfrak{p}^+$ is the Harish-Chandra decomposition of $\mathfrak{g}^\CC$ with respect to the center element $Z\in \mathfrak{k}$. $\frakp^+$ is identified with $V$ by constant vector fields.
	\item $\mathfrak{a}=\RR \xi_e$ and $\fg=\bar{\frakn}_2\oplus\bar{\frakn}_1\oplus(\frakm\oplus\fraka)\oplus\frakn_1\oplus\frakn_2$ is the root space decomposition of $\fg$ with respect to $\fa$ (see Section~\ref{sec:HermGrpHeisenbergParabolic}).
	\item $(\fa^\CC)^*$ is identified with $\CC$ by $\nu\to \nu(\xi_e)$. In particular the half sum of positive roots is given by $\rho=d_1+1$.
	\item $\fn_2=\RR E$ and $\bar{\fn}_2=\RR F$ with $E,F$ defined in Section~\ref{sec:HermGrpHeisenbergParabolic}
	\item $\fn =\fn_1\oplus \fn_2$ and $\bar{\fn}=\bar{\fn}_1\oplus\bar{\fn}_2$ are Heisenberg algebras.
	\item $\bar{N},M,A,N$ are the subgroups of $G$ corresponding to $\bar{\fn},\fm,\fa,\fn$, in particular $\bar{N}$ and $N$ are Heisenberg groups.
	\item $P=MAN$ is a maximal parabolic subgroup with Heisenberg nilradical $N$.
	\item $\fl=\fk \cap \fm$ such that $(\fm,\fl)$ is a Hermitian symmetric subpair of $(\fg,\fk)$ with symmetric space $M/L=D\cap V_0$. Explicitly $L=\{ k \in K; k\cdot e=e  \}$.
	\item $K/L_0=\mathbb{P}(K/L)$ is the compact Hermitian symmetric space, given as the projectivization of $K/L$. Explicitly $L_0=\{k\in K; k\cdot e \in \CC e\}$.
	\item $\fk'=[\fk,\fk]$ is the semisimple part of $\fk$.
	\item $\langle \cdot, \cdot \rangle$ is the inner product on $V$ given by a multiple of $\tr D(\cdot, \bar{\cdot})$, normalized such that $\langle e,e\rangle=1$. It is given explicitly on $V_1$ in Section~\ref{sec:SympInvariants}.
	\item $\omega$ is the symplectic form on $V_1$ given by $\omega(v,w)=4\Im \langle v,w \rangle$.
	\item $\mu: V_1\to \fm\oplus \fa$, $\Psi: V_1\to V_1$ and $\Omega:V_1\to \RR$ are the symplectic invariants defined in Section~\ref{sec:SympInvariants} and $B_\mu$, $B_\Psi$ and $B_\Omega$ denote their symmetrizations.
	\item $(\pi_\nu,I(\nu))$ is the degenerate principal series representation of $G$ defined in Section~\ref{sec:DegPrincSeries}.
	\item $A_\nu:I(\nu)\to I(-\nu)$ is the $G$-intertwining operator defined in Section~\ref{sec:pre_intertwining}.
	\item $u_{\alpha}$ is the integral kernel given by $u_\alpha(v,t)=(t^2-\Omega(v))^\alpha$.
	\item $\calF_\lambda=\calF_\lambda(V_1)$ is the Fock space carrying a irreducible unitary representation $\sigma_\lambda$ of the Heisenberg group $\bar{N}$ with central character $-i\lambda$, as defined in Section~\ref{sec:fourier_transform}.
	\item $\calF_{\lambda,k}(V_1)$ denotes the $k$-th irreducible component of $\calF_\lambda(V_1)$ under $d\omega_{\met,\lambda}|_\fm$ (see Section~\ref{sec:Metaplectic} and Theorem~\ref{thm:A}).
	\item $P_k:\calF_\lambda(V_1)\to \calF_{\lambda,k}(V_1)$ is the orthogonal projection onto the $k$-th component of $\calF_\lambda(V_1)$.
	\item $\sigma_\lambda : L^1(V_1\times \RR)\to \End(\calF_\lambda(V_1))$ denotes the Weyl transform as defined in Section~\ref{sec:fourier_transform}.
	\item $\calF: L^1(V_1\times \RR)\to \bigsqcup_{\lambda\in\RR^\times}\End(\calF_\lambda(V_1))$ denotes the Heisenberg group Fourier transform as defined in Section~\ref{sec:fourier_transform}.
	\item $\omega_{\met,\lambda}$ is the metaplectic representation of $\Sp(V_1,\omega)$ on $\calF_\lambda(V_1)$ (as projective representation) and $d\omega_{\met,\lambda}$ is the derived representation of $\sp(V_1,\omega)$ (see Section~\ref{sec:Metaplectic}).
	\item $\calP(V_1)$ is the space of polynomials on $V_1$.
	\item $\calE(\alpha,k)$ times $|\lambda|^{-2\alpha-d_1-1}$ is the eigenvalue of the Weyl transform of $u_\alpha$ on the $k$-th component $\calF_{\lambda,k}(V_1)$ (see Section~\ref{sec:Eigenvalue}).
	\item $\nabla_vf(x)=\left.\frac{d}{dt}\right|_{t=0}f(x+tv)$ is the directional derivative.
	\item  $\partial_v=\frac{1}{2}(\nabla_v-i\nabla_{iv})$ and $\bar\partial_v=\frac{1}{2}(\nabla_v+i\nabla_{iv})$ denote the holomorphic and antiholomorphic derivatives.
\end{itemize}

\subsection*{Acknowledgements} We would like
to thank Bent {\O}rsted for
several interesting discussions.

\section{Preliminaries}

\subsection{Hermitian Lie groups and Heisenberg parabolic subgroups}\label{sec:HermGrpHeisenbergParabolic}

Let $D=G/K$ be an irreducible Hermitian symmetric space of rank $r$
realized as the unit ball in a Hermitian Jordan triple system $V$.
We write the triple product as $\{u,\bar v,w\}=D(u,\bar v)w$ with
$D:V\times\bar{V}\to\End(V)$, where $\bar V$ is the complex conjugate
vector space. The corresponding operator $Q:V\times
V\to\Hom(\bar{V},V)$ is defined by $Q(u, w)\bar v=D(u,\bar v)w$, and
we abuse notation to write $Q(v)=\frac{1}{2}Q(v,v)$.
  Any $K$-invariant inner product
  $\langle\cdot,\cdot\rangle$ on $V$ is a scalar multiple of $(v,w)\mapsto\tr D(v,\bar{w})$, and we normalize it such that a minimal tripotent has norm one.

The Lie algebra $\fg$ consists of vector fields on $V$ and we have the
Cartan decomposition $\fg=\fk\oplus\fp$ into the Lie algebra $\fk$ of
$K$ consisting of linear vector fields, i.e. $\fk\subseteq\gl(V)$, and
$\fp=\{\xi_v;v\in V\}$ where $\xi_v(x)=v-Q(x)\bar{v}$ ($x\in V$). The
Lie algebra $\fk$ has a one-dimensional center containing the vector
field $Z\in\gl(V)$ given by multiplication with $i$ on $V$. The
corresponding Harish-Chandra decomposition of the complexification
$\fg^\CC$ of $\fg$ is the decomposition into eigenspaces of $\ad(Z)$:
$\fg^{\mathbb C}=\fp^+\oplus \fk^{\mathbb C}\oplus \fp^-$. Here,
$\fp^+$ is the space of constant vector fields and can therefore be
identified with $V$ as a complex vector space and we write $\frakp^+=V$.

Denote by $(r, a, b)$ the Jordan characteristic of $V$, i.e. $r$ is the rank of $V$ and $a$ and $b$ the dimensions of the joint eigenspaces of a Jordan frame (see \cite{Loo77} for details and Table~\ref{tab:2} for the Jordan characteristics in all cases). Let $e\in V$ be a minimal tripotent, then $\xi_e\in\frakp$ defines a Heisenberg grading of $\frakg$. More precisely, $\ad(\xi_e)$ has eigenvalues $\{-2,-1,0,1,2\}$ on $\frakg$ and we write
$$ \frakg = \bar{\frakn}_2\oplus\bar{\frakn}_1\oplus(\frakm\oplus\fraka)\oplus\frakn_1\oplus\frakn_2 $$
for the corresponding decomposition into eigenspaces. Here,
$\fraka=\RR\xi_e$ and $\frakm\oplus\fraka$ is an orthogonal
decomposition of the centralizer of $\xi_e$. Moreover
$$ \frakn_2 = \RR E \qquad \mbox{and} \qquad \bar{\frakn}_2 = \RR F $$
with
$$ E=\frac{1}{2}(\xi_{ie}-iD(e,\bar{e})) \qquad \mbox{and} \qquad F=\frac{1}{2}(\xi_{ie}+iD(e,\bar{e})). $$
In particular $\fn=\fn_1\oplus\fn_2$ and
$\bar{\fn}=\bar{\fn}_1\oplus\bar{\fn}_2$ are Heisenberg algebras. The
corresponding parabolic subalgebra $\fm + \fa +\fn$ of $\frakg$ is
called a \emph{Heisenberg parabolic subalgebra} and we write $P=MAN$
for
 the corresponding (maximal) parabolic subgroup of $G$ and $\bar{N}$ for connected subgroup with Lie algebra $\bar{\fn}$.

To also describe $\fn_1$ and $\bar{\fn}_1$ explicitly, let $V=V_2\oplus  V_1\oplus  V_0$ be the Peirce decomposition of $V$ with respect to the minimal tripotent $e$, i.e. the decomposition into eigenspaces of $D(e,\bar{e})$. Then
\begin{align*}
	\mathfrak{n}_1 &= \{ n_v:=\xi_v+(D(e,\bar{v})-D(v,\bar{e})); v\in V_1\},\\
	\bar{\mathfrak{n}}_1 &= \{\bar{n}_v:= \xi_v-(D(e,\bar{v})-D(v,\bar{e})); v\in V_1\}.
\end{align*}

Let $\fl=\fk\cap \fm$, then $(\fm, \fl)$ is a Hermitian symmetric
subpair of $(\frakg,\frakk)$ whose symmetric space $M/L$ is
the subdomain $M/L=D\cap V_0\subset D$ in $V_0\subset V$ of rank $r-1$. Consequently, the complex structure for $(\frakg,\frakk)$ also defines a complex structure for $(\frakm,\frakl)$, and we write
$$ \fm^{\mathbb C} = \fp_{\fm}^{+} \oplus \fl^{\mathbb C} \oplus   \fp^{-}_{\fm} $$
for the corresponding Harish-Chandra decomposition, where $\fp_{\fm}^\pm=\fm^{\mathbb C}\cap\fp^\pm$, where $\frakp_\frakm^+=V_0$ under the above identification.

\subsection{Symplectic invariants}\label{sec:SympInvariants}

Recall the inner product $\langle\cdot,\cdot\rangle$ on $V$ and note that on $V_1$ it satisfies
\begin{equation}
	D(v,\bar{w})e = \langle v,w\rangle e \qquad (v,w\in V_1).\label{eq:DvsIP}
\end{equation}
Then $V_1$ carries a natural symplectic form $\omega$ given by
\begin{equation}
	\omega(v,w) = 4\Im\langle v,w\rangle.\label{eq:Defomega}
\end{equation}

Following \cite[Chapter 2.4]{Fra22}, we define the following $\Ad(M)$-equivariant maps on $V_1\simeq\bar{\fn}_1$: the moment map
$$\mu: V_1\to \mathfrak{m}+\mathfrak{a}\qquad \mu(v)=\frac{1}{2!}\ad(\bar{n}_v)^2E, $$
the cubic map
$$\Psi: V_1 \to V_1, \qquad \Psi(v)=\frac{1}{3!}\ad(\bar{n}_v)^3E,$$
and the $\Ad(M)$-invariant quartic map 
$$ \Omega:V_1\to \RR, \qquad \Omega(v)F=\frac{1}{4!}\ad(\bar{n}_v)^4E.$$
We denote the $B_\mu$, $B_\Psi$ and $B_\Omega$ the symmetrizations of the maps above and refer the reader to \cite[Chapter~2.4]{Fra22} for more details. The following formulas express the three symplectic invariants $\mu$, $\Psi$ and $\Omega$ in terms of the Jordan triple product:

\begin{prop+}\label{prep-1}
	For $v,w\in V_1$ the following identities hold:
	\begin{enumerate}[label=(\roman*), ref=\ref{prep-1}(\roman*)]
		\item\label{prep-1-1} $[\bar{n}_v,\bar{n}_w]=\omega(v,w)F$,
		\item\label{prep-1-2} $\ad(\bar{n}_v)E=n_{iv}$,
		\item\label{prep-1-3} $\mu(v)=\xi_{iD(v,\bar{e})v}+i(\abs{v}^2D(e,\bar{e})-2D(v,\bar{v}))$,
		\item\label{prep-1-4} $\Psi(v)=i(2Q(v)\bar{v}-\abs{v}^2v)$,
		\item\label{prep-1-5}$\Omega(v)=\abs{v}^4-\langle D(v,\bar{v})v,v\rangle $,
		\item\label{prep-1-6} $[\mu(v), \bar{n}_w]  =\bar{n}_x$, where $x= i(|v|^2 w- 2 D(v, \bar v) w) +iD(e, \bar w) D(v, \bar e) v$.
	\end{enumerate}
\end{prop+}

The proof makes use of the following fact about the triple product and the Peirce decomposition:

\begin{lemma}[{\cite[Theorem~3.13]{Loo77}}]
	For $\alpha,\beta,\gamma\in\{0,1,2\}$ we have $D(V_\alpha,\bar V_\beta)V_\gamma \subseteq V_{\alpha-\beta+\gamma}$, where we put $V_j=\{0\}$ for $j\neq 0,1,2$.
\end{lemma}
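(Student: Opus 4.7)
The plan is to reduce the Peirce product rule to a single commutator identity on the Peirce operator $L:=D(e,\bar e)$. By definition, $V_\alpha$ is the $\alpha$-eigenspace of $L$ for $\alpha\in\{0,1,2\}$, so it suffices to show that whenever $u\in V_\alpha$, $v\in V_\beta$ and $z\in V_\gamma$, the vector $D(u,\bar v)z$ is an eigenvector of $L$ with eigenvalue $\alpha-\beta+\gamma$; it must then vanish unless this value lies in $\{0,1,2\}$, giving the stated inclusion.

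The starting point is the fundamental identity of the Hermitian Jordan triple system,
$$[D(x,\bar y),\,D(u,\bar v)] = D\bigl(D(x,\bar y)u,\,\bar v\bigr) - D\bigl(u,\,\overline{D(y,\bar x)v}\bigr),$$
which is among the defining axioms of $V$ recorded in~\cite{Loo77}. I will specialize to $x=y=e$, which turns the left-hand side into $[L,D(u,\bar v)]$ and yields
$$[L,\,D(u,\bar v)] = D(Lu,\,\bar v) - D(u,\,\overline{Lv}).$$
Since the eigenvalues of $L$ are real, the assumptions $u\in V_\alpha$ and $v\in V_\beta$ give $Lu=\alpha u$ and $\overline{Lv}=\beta\bar v$, which collapses the right-hand side to $(\alpha-\beta)D(u,\bar v)$.

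To finish, I will apply both sides to $z\in V_\gamma$ and use $Lz=\gamma z$ to deduce
$$L\bigl(D(u,\bar v)z\bigr) = (\alpha-\beta+\gamma)\,D(u,\bar v)z.$$
Since the spectrum of $L$ on $V$ is contained in $\{0,1,2\}$, the vector $D(u,\bar v)z$ vanishes whenever $\alpha-\beta+\gamma\notin\{0,1,2\}$, and otherwise lies in $V_{\alpha-\beta+\gamma}$ by the very definition of the Peirce subspaces.

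No step presents a real obstacle: the whole argument is a three-line computation once the fundamental identity is available. The only subtlety worth flagging is the conjugation in the second slot of $D$, which is what produces the sign $-\beta$ (rather than $+\beta$) and turns the Peirce multiplication rule into an honest $\ZZ$-grading (truncated to $\{0,1,2\}$) rather than a bigrading.
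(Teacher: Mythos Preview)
The paper does not supply its own proof of this lemma; it simply cites Loos~\cite[Theorem~3.13]{Loo77}. Your argument is correct and is precisely the standard proof: specialize the fundamental Jordan triple identity $[D(x,\bar y),D(u,\bar v)]=D(D(x,\bar y)u,\bar v)-D(u,\overline{D(y,\bar x)v})$ to $x=y=e$, then read off that $D(u,\bar v)$ shifts the $D(e,\bar e)$-eigenvalue by $\alpha-\beta$. This is in substance the same computation one finds in Loos, so there is nothing to compare.
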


\begin{proof}[Proof of Proposition~\ref{prep-1}]
	Ad (i):
	Clearly $[\bar{n}_v,\bar{n}_w]\in \bar{\mathfrak{n}}_2$ and since $F(0)=\frac{1}{2}ie$, checking
	$$[\bar{n}_v,\bar{n}_w](0)=D(v,\bar{w})e-D(w,\bar{v})e$$
	we obtain $$[\bar{n}_v,\bar{n}_w]=\omega'(v,w)F,$$
	where $\omega'(v,w)e=-2i(D(v,\bar{w})-D(w,\bar{v}))e$ is a symplectic form on $V_1$ and induces the hermitian form $\langle v , w \rangle'e=-4D(w,\bar{v})e$, such that indeed $\omega'(v,w)=-4 \im \langle w,v \rangle = 4\im \langle v, w\rangle$.
	
	Ad (ii):
	Since $\ad(\bar{n}_v)E \in \mathfrak{n}_1$ it is enough to check the constant terms and it is easily checked, that
	$$\ad(\bar{n}_v)E(0)=iv.$$
	
	Ad(iii): $\mu(v)\in \mathfrak{m}+\mathfrak{a}$ such that
	$\mu(v)=\alpha \xi_e+\xi_w+L$, for a scalar $\alpha$, some $w\in V_0$ and $L\in \mathfrak{l}$. In particular $L$ is linear in $z$. First, checking
	$$\mu(v)(0)=\frac{1}{2}[\bar{n}_v,n_{iv}](0)=iD(v,\bar{e})v\in V_0$$
	gives $\alpha=0$ and $w=iD(v,\bar{e})v.$
	Let $\tilde{v}=v-\xi_v$ be the quadratic part of $\xi_v$.
	Since all the linear terms of $\mu(v)$ are $L$, we have that 
	$$L=2i[D(e,\bar{v}),D(v,\bar{e})]+[v,\tilde{iv}]-[iv,\tilde{v}].$$
	Following \cite[Lemma~2.6 and Chapter~8.6]{Loo77} we obtain
	$$L=i \abs{v}^2D(e,\bar{e})-2iD(v,\bar{v}).$$
	
	Ad (iv): As before, since $\Psi(v)\in \bar{\mathfrak{n}}_1$, it is enough to check the constant term. We find
	$$\Psi(v)(0)=\frac{1}{3}[\bar{n}_v,\mu(v)](0)=\frac{1}{3}([v,l]+[D(v,\bar{e}),iD(v,\bar{e})v]-[D(e,\bar{v}),iD(v,\bar{e})v]),$$
	which is easily evaluated to $i(\abs{v}^2v-2Q(v)\bar{v})$ using the identities of \cite[Lemma~2.6 and Appendix JP12]{Loo77}.
	
	Ad(v): We have by (i) and \cite[Appendix~JP2]{Loo77}
	\begin{multline*}
	\Omega(v)e =\frac{1}{4}\omega(v,\Psi(v))e=-\frac{i}{2}(D(v,\bar{\Psi(v)})-D(\Psi(v),\bar{v}))e\\=(\abs{v}^2D(v,\bar{v})-2D(v,Q(\bar{v})v)-2D(Q(v)\bar{v},\bar{v}))e=(\abs{v}^4-\langle D(v,\bar{v})v,v\rangle)e.
	\end{multline*}
	
	Ad (vi): This is proven in the same way as (iv), by evaluating the constant term.
\end{proof}

\subsection{Degenerate principal series representations}\label{sec:DegPrincSeries}

For $\nu \in (\mathfrak{a}^\CC)^*$ we let $(\pi_\nu,I(\nu))$ be the induced representation $\Ind_P^G(\mathbf{1}\otimes e^\nu \otimes \mathbf{1})$, acting by left-translation on
$$I(\nu)=\{ f\in C^\infty(G); f(gman)=a^{-\nu-\rho} f(g) \forall man \in MAN  \},$$
where $\rho \in (\mathfrak{a}^\CC)^*$ denotes the half sum of all
positive roots and $a^\lambda=e^{\lambda(\log a)}$ for $a\in
A$. We identify $(\mathfrak{a}^\CC)^*$ with $\CC$ by
$\nu\mapsto\nu(\xi_e)$, then $\rho=d_1+1$, where $d_1=\dim_\CC V_1$.
In this article we will be concerned with the non-compact picture of the degenerate principal series representations, hence we consider $I(\nu)\subseteq C^\infty(\bar{N})$, by the restriction to the dense open subset $\bar{N}P\subseteq G$. Identifying the Heisenberg group $\bar{N}$ with $V_1\times\RR$ by
$$ V_1\times\RR\mapsto\bar{N}, \quad (v,t)\mapsto\exp(\bar{n}_v+tF), $$
we will frequently write functions on $\bar{N}$ as $f(v,t)$ with $v\in V_1$ and $t\in\RR$.

\subsection{Intertwining operators}\label{sec:pre_intertwining}

Let $w_0=\exp(\frac{\pi}{2}(E-F))\in K$, then $w_0^{-1}Pw_0=\bar{P}$. For $\Re\nu>\rho$ and $f\in I(\nu)$ the following integral converges for all $g\in G$:
$$ A_\nu f(g) = \int_{\bar{N}}f(gw_0\bar{n})\,d\bar{n}. $$
This defines a family of intertwining operators $A_\nu:I(\nu)\to I(-\nu)$ which can be extended meromorphically in $\nu\in\CC$. In \cite[Proposition~3.3.1]{Fra22} it is shown that
$$ A_\nu f(v,t) = \int_{V_1\times\RR} u_{\frac{\nu-\rho}{2}}(w,s)f((v,t)\cdot(w,s))\,d(w,s), $$
where the integral kernel is given by
$$ u_\alpha(v,t)=(t^2-\Omega(v))^\alpha. $$
(We remark that for non-Hermitian groups one has to use the absolute value of $t^2-\Omega(v)$ while for Hermitian groups $t^2-\Omega(v)\geq0$ since $\Omega\leq0$ by \cite[Theorem~2.9.1]{Fra22}.)
Note that since $u_\alpha(x^{-1})=u_\alpha(x)$ for all $x\in\bar{N}$, this can be written as
\begin{equation}
	A_\nu f = f*u_{\frac{\nu-\rho}{2}},\label{eq:KSConvolution}
\end{equation}
where
$$ (f*g)(x) = \int_{\bar{N}}f(y)g(y^{-1}x)\,dy \qquad (x\in\bar{N}). $$
To understand this convolution, we apply the Heisenberg group Fourier transform which transforms a convolution into a (non-abelian) multiplication.

\subsection{The Weyl transform and the Heisenberg group Fourier transform}\label{sec:fourier_transform}

For every $\lambda\in\RR^\times$, the Heisenberg group $\bar{N}$ has a unique irreducible unitary representation with central character $-i\lambda$. For $\lambda>0$ resp. $\lambda<0$ we realize this representation on the Fock space $\calF_\lambda=\calF_\lambda(V_1)$ consisting of holomorphic resp. antiholomorphic functions $\zeta:V_1\to\CC$ such that
$$ \|\zeta\|_{\calF_\lambda}^2 = \int_{V_1}|\zeta(z)|^2e^{-2|\lambda||z|^2}\,dz<\infty $$
by
$$ \sigma_\lambda(v,t)\zeta(z) = \zeta(z+v)\times\begin{cases}e^{-i\lambda t- |\lambda||v|^2 - 2|\lambda| \langle z, v\rangle}&\mbox{for $\lambda>0$,}\\e^{-i\lambda t- |\lambda||v|^2-2|\lambda|\langle v,z\rangle}&\mbox{for $\lambda<0$.}\end{cases} $$
Here we normalize Lebesgue measure on $V_1$ using the inner product $\langle\cdot,\cdot\rangle$.
The space $\calP(V_1)$ of holomorphic/antiholomorphic polynomials on $V_1$ is dense in $\calF_\lambda(V_1)$ 
and the derived representation of $\bar{\frakn}$ on $\calP(V_1)$ is easily computed:
\begin{equation}
	d\sigma_\lambda(v,t)\zeta(z) = \begin{cases}\partial_v\zeta(z)-2|\lambda|\langle z,v\rangle\zeta(z)-i\lambda\zeta(z)&\mbox{for $\lambda>0$,}\\\bar\partial_v\zeta(z)-2|\lambda|\langle v,z\rangle\zeta(z)-i\lambda\zeta(z)&\mbox{for $\lambda<0$.}\end{cases}\label{eq:HeisenbergRepLieAlg}
\end{equation}

The Weyl transform $\sigma_\lambda(u)\in\End(\calF_\lambda)$ of $u\in L^1(V_1\times \mathbb R)$ is defined by
$$ \sigma_\lambda(u)=\int_{V_1\times \mathbb R} u(v, t) \sigma_\lambda(v,t)\,dv\,dt. $$
It turns convolution into composition of operators on $\calF_\lambda$:
\begin{equation}
	\sigma_\lambda(f*g) = \sigma_\lambda(f)\circ\sigma_\lambda(g) \qquad (f,g\in L^1(\bar{N})).\label{eq:WeylTransformConvolution}
\end{equation}
Combining all $\sigma_\lambda(u)$, $\lambda\in\RR^\times$, we obtain the Heisenberg group Fourier transform
$$ \calF:L^1(V_1\times\RR)\to\bigsqcup_{\lambda\in\RR^\times}\End(\calF_\lambda(V_1)), \qquad \calF u(\lambda)=\sigma_\lambda(u). $$
For $u\in\mathcal{S}(V_1\times\RR)$ the Fourier transform satisfies the inversion formula
\begin{equation}
	u(v, t) = c \int_{\mathbb R} \tr_{\mathcal F_\lambda}(\sigma_\lambda(-v, -t)\sigma_\lambda(u))|\lambda|^{d_1}\, d\lambda\label{eq:InversionWeylTransform}
\end{equation}
and the Plancherel formula
\begin{equation}\label{eq:plancherel_heisenberg}
	\|u\|_{L^2(\bar{N})}^2 = c \int_\RR\|\sigma_\lambda(u)\|_{\operatorname{HS}(\calF_\lambda)}^2|\lambda|^{d_1}\,d\lambda
\end{equation}
with$c>0$  depending only on the normalization of the measures. Here, $\|T\|_{\operatorname{HS}(\calF_\lambda)}^2=\tr_{\calF_\lambda}(T^*T)$ denotes the Hilbert--Schmidt norm of a Hilbert--Schmidt operator $T$ on $\calF_\lambda$.

Since $I(\nu)$ is not always contained in $L^1(\bar{N})$ or
$L^2(\bar{N})$, we will also need a distributional version of the
Heisenberg group Fourier transform. This version is easiest to formulate if we realize all representations $\sigma_\lambda$ on the same Hilbert space $\calH$ having the same space $\calH^\infty$ of smooth vectors (e.g. in the Schr\"{o}dinger model on $\calH=L^2(\Lambda)$ for a Lagrangian subspace $\Lambda\subseteq V_1$ where $\calH^\infty=\calS(\Lambda)$, the space of Schwartz functions.) In \cite[Corollary 3.5.3]{Fra22} it was shown that whenever $\Re\nu>-\rho$, the Fourier transform $\calF:L^2(\bar{N})\to L^2(\RR^\times,\operatorname{HS}(\calH);|\lambda|^{d_1}\,d\lambda)$ can be extended to an injective continuous linear operator
$$ \calF:I(\nu)\to\calD'(\RR^\times)\otimeshat\Hom(\calH^\infty,\calH^{-\infty}), \quad \calF u(\lambda)=\sigma_\lambda(u). $$

Now, if $\Re\nu\in(-\rho,\rho)$, then the Fourier transform is injective both on $I(\nu)$ and $I(-\nu)$, and there exists an operator $\widehat{A}_\nu:\calF(I(\nu))\to\calF(I(-\nu))$ such that
$$ \sigma_\lambda(A_\nu u) = \widehat{A}_\nu\sigma_\lambda(u). $$
By \eqref{eq:KSConvolution} and \eqref{eq:WeylTransformConvolution}, we have that $\widehat{A}_\nu$ is given by composition with $\sigma_\lambda(u_{\frac{\nu-\rho}{2}})$.

The main result of this article is to compute $\sigma_\lambda(u_{\alpha})$ in the Fock space model. For this we first need to understand the action of $M$ on the Fourier transformed side, and this involves the metaplectic representation.

\subsection{The metaplectic representation}\label{sec:Metaplectic}

Let $\Sp(V_1,\omega)$ denote the symplectic group of the symplectic vector space $(V_1,\omega)$ and $\sp(V_1,\omega)$ its Lie algebra. For fixed $\lambda\in\RR^\times$, there exists a unique projective representation $\omega_{\met,\lambda}$ of $\Sp(V_1,\omega)$ on $\calF_\lambda(V_1)$ such that
$$ \sigma_\lambda(gv,t) = \omega_{\met,\lambda}(g)\circ\sigma_\lambda(v,t)\circ\omega_{\met,\lambda}(g)^{-1} \qquad (g\in\Sp(V_1,\omega),(v,t)\in V_1\times\RR). $$
Denote by $d\omega_{\met,\lambda}$ the derived representation of $\sp(V_1,\omega)$, then $d\omega_{\met,\lambda}(T)$ ($T\in\sp(V_1,\omega)$) is the unique holomorphic/antiholomorphic (depending on whether $\lambda>0$ or $\lambda<0$) differential operator on $V_1$ of order at most $2$, skew-symmetric with respect to the inner product on $\calF_\lambda(V_1)$, such that
\begin{equation}
	d\sigma_\lambda(Tv,t) = [d\omega_{\met,\lambda}(T),d\sigma_\lambda(v,t)] \qquad \mbox{for all }v\in V_1,t\in\RR.\label{eq:DefMetRepLieAlg}
\end{equation}
The underlying Harish-Chandra module of $\omega_{\met,\lambda}$ is $\calP(V_1)$.

We are interested in the restriction of $d\omega_{\met,\lambda}$ to
the subalgebra $\frakm\subseteq\sp(V_1,\omega)$. Since the center of
the maximal compact subalgebra $\fraku(V_1)$ of $\sp(V_1,\omega)$ is contained in $\frakm$ (see \cite[Corollary 2.9.4]{Fra22}), the restriction of $d\omega_{\met,\lambda}$ to $\frakm$ decomposes discretely into a direct sum of irreducible unitary highest weight representations $\calF_{\lambda,k}(V_1)$ of $\frakm$:
\begin{equation}
  \calF_\lambda(V_1)
  = \bigoplus_k\calF_{\lambda,k}(V_1).\label{eq:AbstractDecompFockSpace}
\end{equation}
(We will later see that the sum is over $\mathbb Z$ if $\frakg\simeq\su(p,q)$ and over $\mathbb Z_{\ge 0}$ if $\frakg\not\simeq\sp(n,\RR),\su(p,q)$.) Actually, it is sufficient to decompose $d\omega_{\met,\lambda}|_\frakm$ for some fixed $\lambda$ and use the $\frakm$-intertwining operators
$$ \calF_\lambda(V_1)\to\calF_{s^{-2}\lambda}(V_1), \quad \zeta\mapsto \zeta_s, \quad \zeta_s(z):= \zeta(s^{-1} z) \qquad (s>0) $$
between $d\omega_{\met,\lambda}$ and $d\omega_{\met,s^{-2}\lambda}$. Then we can arrange that $\calF_{s^{-2}\lambda,k}(V_1)=\calF_{\lambda,k}(V_1)_s$ for $s>0$. Further, since complex conjugation also defines an $\frakm$-intertwining operator
$$ \calF_\lambda(V_1)\to\calF_{-\lambda}(V_1), \zeta\mapsto\bar{\zeta} $$
between $d\omega_{\met,\lambda}$ and $d\omega_{\met,-\lambda}$, we can arrange that $\overline{\calF_{\lambda,k}(V_1)}=\calF_{-\lambda,k}(V_1)$.

Let $P_k$ denote the orthogonal projection onto $\calF_{\lambda,k}(V_1)$. We will later see that the decomposition \eqref{eq:AbstractDecompFockSpace} is in fact multiplicity-free (see Theorems~\ref{thm:M_decomposition} and \ref{thm:MetaplecticRestrictionMSU(p,q)}). Then the action of the Weyl transform $\sigma_\lambda(u_\alpha)$ on $\calF_\lambda(V_1)$ is diagonalizable:

\begin{lemm+}\label{lemma:intertwiner_scalar_on_M-types}
	If the decomposition \eqref{eq:AbstractDecompFockSpace} is multiplicity-free, then there exist scalars $\calE(\alpha,k)$ such that
	$$ \sigma_\lambda(u_\alpha) = |\lambda|^{-2\alpha-d_1-1}\sum_k \calE(\alpha,k)\cdot P_k. $$
\end{lemm+}

\begin{proof}
	It follows from the $M$-invariance of $u_\alpha$ and Schur's Lemma that
	$$ \sigma_\lambda(u_\alpha) = \sum_k \calE(\alpha,k;\lambda)\cdot P_k $$
	for some scalars $\calE(\alpha,k;\lambda)\in\CC$. We claim that the homogeneity of $u_\alpha$ implies that $\calE(\alpha,k;\lambda)$ is homogeneous in $\lambda$ of degree $-2\alpha-d_1-1$. If we denote the natural action of $\RR^+$ on functions/distributions $u$ on $\bar{N}=V_1\times\RR$ by
	$$ f\mapsto f_s, \qquad f_s(z, t):= f(s^{-1} z, s^{-2}t), $$
then a straightforward computation shows that for $s>0$:
	\begin{equation}
		\label{s-action}
		(\sigma_{s^2\lambda}(f) \zeta)_s = s^{-2d_1-2}\sigma_{\lambda}(f_s) \zeta_s \qquad (\zeta\in\calF_{s^2\lambda}(V_1)).
	\end{equation}
	Note that for $f=u_\alpha$ we have $f_s=s^{-4\alpha}f$. Hence, applying \eqref{s-action} to $f=u_\alpha$ and $\zeta\in\calF_{s^2\lambda,k}(V_1)$ shows that $\calE(\alpha,k;s^2\lambda)=s^{-4\alpha-2d_1-2}\calE(\alpha,k;\lambda)$. That $\calE(\alpha,k;\lambda)$ only depends on $|\lambda|$ and not on $\sgn\lambda$ follows by taking complex conjugates.
\end{proof}

In Section~\ref{sec:MetaplecticRestrictionM} we make the decomposition \eqref{eq:AbstractDecompFockSpace} explicit, and in Section~\ref{sec:Eigenvalue} we find the eigenvalues $\calE(\alpha,k)$. For this, we first need to understand the action of $\frakm$ in the metaplectic representation. Recall the decomposition $\frakm^\CC=\frakp_\frakm^+\oplus\frakl^\CC\oplus\frakp_\frakm^-=V_0\oplus\frakl^\CC \oplus \bar{V}_0$.

\begin{lemma}\label{lem:MetaplecticLieAlgAction}
	Let $\lambda>0$. The restriction of $d\omega_{\met,\lambda}$ to $\frakm$ is given by
	\begin{align*}
		d\omega_{\met,\lambda}(w) &= -\frac{1}{4\lambda}\sum_{\alpha,\beta}  \langle w, Q(v_\alpha, v_\beta)\bar e\rangle\partial_{v_\alpha}\partial_{v_\beta}\\
		d\omega_{\met,\lambda}(T) &= -\partial_{Tz}-\tfrac{1}{2}\tr_{V_1}(T)\\
		d\omega_{\met,\lambda}(Q(x)\bar{w}) &= 2\lambda\langle Q(z)\bar{e},w\rangle,
	\end{align*}
	where $(v_\alpha)_\alpha$ is any basis of $V_1$.
\end{lemma}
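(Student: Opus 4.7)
The plan is to verify each of the three formulas directly against the characterizing commutator relation \eqref{eq:DefMetRepLieAlg},
$$ d\sigma_\lambda(Tv, t) = [d\omega_{\met,\lambda}(T), d\sigma_\lambda(v, t)] \qquad (v \in V_1,\, t \in \RR), $$
which, combined with the constraint that $d\omega_{\met,\lambda}(T)$ is a differential operator of order $\leq 2$ on $\calF_\lambda(V_1)$, determines $d\omega_{\met,\lambda}(T)$ up to a zeroth-order constant. I would work throughout with $\lambda > 0$; the case $\lambda < 0$ follows from the antilinear intertwiner $\calF_\lambda(V_1) \to \calF_{-\lambda}(V_1)$, $\zeta \mapsto \bar\zeta$, recalled in Section~\ref{sec:Metaplectic}.

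First I would compute the induced linear action $v \mapsto T \cdot v$ on $\bar\frakn_1 \simeq V_1$ for $T$ in each of the three subspaces $\frakl^\CC$, $\frakp_\frakm^+ = V_0$, and $\frakp_\frakm^- = \bar V_0$. Since $\frakm$ centralizes $\fraka = \RR\xi_e$, the bracket $[T, \bar n_v]$ lies in $\bar\frakn_1$, so this map is well defined. For $T \in \frakl^\CC \subseteq \fraku(V_1)^\CC$ it is the natural complex-linear action; for elements of $\frakp_\frakm^\pm$ it is computed from the Jordan triple product using the Peirce rules $D(V_\alpha, \bar V_\beta)V_\gamma \subseteq V_{\alpha-\beta+\gamma}$ and identities analogous to Proposition~\ref{prep-1}(vi). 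The right-hand side $d\sigma_\lambda(T \cdot v, 0)$ is then explicit from \eqref{eq:HeisenbergRepLieAlg}.

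Next I would substitute the proposed operators into the left-hand side and match. For $T \in \frakl^\CC$, the basic commutators $[\partial_{Tz}, \partial_v] = -\partial_{Tv}$ and $[\partial_{Tz}, \langle z, v\rangle] = \langle Tz, v\rangle = -\langle z, Tv\rangle$ (the second equality uses that $\frakl \subseteq \fraku(V_1)$ acts anti-Hermitianly, extended $\CC$-linearly) give $[-\partial_{Tz}, d\sigma_\lambda(v,t)] = \partial_{Tv} - 2\lambda\langle z, Tv\rangle = d\sigma_\lambda(Tv, 0)$, confirming the principal symbol. For the $\frakp_\frakm^\pm$ cases, the elementary identity $[\partial_{v_\alpha}\partial_{v_\beta}, \langle z, v\rangle] = \langle v_\alpha, v\rangle\partial_{v_\beta} + \langle v_\beta, v\rangle\partial_{v_\alpha}$ and its dual for commutation with the quadratic polynomial $\langle Q(z)\bar e, w\rangle$ reduce the check to the Jordan-triple identity $Q(v_\alpha, v_\beta)\bar e = D(v_\alpha, \bar e) v_\beta$, which links the polarization of $Q(z)\bar e$ on the Fock side with the expression for $[T, \bar n_v]$ obtained in Step~1.

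Finally, the zeroth-order constant $-\tfrac{1}{2}\tr_{V_1}(T)$ in the $\frakl^\CC$-formula is invisible to the commutator relation (which only sees the principal symbol) and must be fixed independently. The correct value is the classical metaplectic correction, forced by the requirement that $\omega_{\met,\lambda}$ be the genuine, not merely projective, metaplectic representation; it can be checked by evaluating $d\omega_{\met,\lambda}(iI)$ on the vacuum vector $1 \in \calF_\lambda(V_1)$ and matching the standard expression $d\omega_{\met,\lambda}(iI) = -i(N + \tfrac{d_1}{2})$, where $N = \sum_j z_j \partial_j$ is the number operator. The hard part will be the Jordan-triple bookkeeping in the $\frakp_\frakm^\pm$ cases: translating between the embeddings $\frakm \subseteq \frakg$ and $\frakm \subseteq \sp(V_1, \omega)$, and matching the polarization of the quadratic multiplication and second-order operators with the appropriate Jordan-triple expressions for $[T, \bar n_v]$, even though all identities needed are standard consequences of the Peirce calculus.
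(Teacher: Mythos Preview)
Your approach is essentially the paper's: compute the adjoint action of $\frakm^\CC$ on $\bar\frakn_1$ and then invoke the commutator characterization \eqref{eq:DefMetRepLieAlg}. The paper streamlines your Step~1 by first complexifying $\bar\frakn_1^\CC$ and splitting it as $\{\bar n_v^+\}\oplus\{\bar n_{\bar w}^-\}$, on which $d\sigma_\lambda$ acts as $\partial_v$ and $-2\lambda\langle z,v\rangle$ separately; the action of $\frakp_\frakm^\pm$ is then strictly triangular with respect to this splitting (e.g.\ $\ad(w)\bar n_v^+=0$, $\ad(w)\bar n_{\bar v}^-=\bar n_{D(w,\bar v)e}^+$), so each of the six brackets is a single Jordan-triple term and the formulas drop out without the polarization bookkeeping you anticipate. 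Your direct real-variable route works but is heavier.

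One correction on the constant: it is not fixed by requiring a ``genuine'' representation---the paper explicitly treats $\omega_{\met,\lambda}$ as projective on $\Sp(V_1,\omega)$---but by the skew-symmetry requirement built into the characterization of $d\omega_{\met,\lambda}$ stated just before \eqref{eq:DefMetRepLieAlg}. That condition pins down the additive constant simultaneously in all three cases, whereas your vacuum-vector argument only handles $\frakl^\CC$ and leaves the (vanishing) constants for $\frakp_\frakm^\pm$ unaddressed.
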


\begin{proof}
	We first compute the action of $\frakm^\CC=\frakp_\frakm^+\oplus\frakl^\CC\oplus\frakp_\frakm^-$ on $\bar{\frakn}_1^\CC\simeq V_1^\CC$. Recall the linear isomorphism $V_1\to\bar{\frakn}_1,\,v\mapsto\bar{n}_v=\xi_v-((D(e,\bar{v})-D(v,\bar{e}))$. It follows that $\bar{\frakn}_1^\CC=\{\bar{n}_v^++\bar{n}_{\bar{w}}^-;v\in V_1,\bar{w}\in\bar{V}_1\}$, where
	$$ \bar{n}_v^+(x) = v+D(v,\bar{e})x \qquad \mbox{and} \qquad \bar{n}_{\bar{w}}^-(x) = -Q(x)\bar{w}-D(e,\bar{w})x. $$
	Moreover, $\frakp_\frakm^+$ consists of the constant vector
        fields $w$ for $w\in V_0$ and $\frakp_\frakm^-$ consists of
        the vector fields $Q(x)\bar{w}$ for $w\in V_0$. A short
        computation using basic Jordan identities \cite[Appendix]{Loo77}  shows that
	\begin{align*}
		\ad(w)\bar{n}_v^+ &=  0 & \ad(w)\bar{n}_{\bar{v}}^- &= \bar{n}_{D(w,\bar{v})e}^+ && (w\in V_0),\\
		\ad(T)\bar{n}_v^+ &= \bar{n}_{Tv}^+ & \ad(T)\bar{n}_{\bar{v}}^- &= \bar{n}_{\bar{Tv}}^- && (T\in\frakm^\CC\cap\frakl^\CC),\\
		\ad(Q(x)\bar{w})\bar{n}_v^+ &= \bar{n}^-_{D(\bar{w},v)\bar{e}} & \ad(Q(x)\bar{w})\bar{n}_{\bar{v}}^- &= 0 && (w\in V_0).
	\end{align*}
	From \eqref{eq:HeisenbergRepLieAlg} it follows that
	$$ d\sigma_\lambda(\bar{n}_v^+) = \partial_v \qquad \mbox{and} \qquad d\sigma_\lambda(\bar{n}_{\bar{v}}^-) = -2|\lambda|\times\begin{cases}\langle z,v\rangle&\mbox{for $\lambda>0$,}\\\langle v,z\rangle&\mbox{for $\lambda<0$.}\end{cases} $$
	The rest is a simple computation using the characterization \eqref{eq:DefMetRepLieAlg} of $d\omega_{\met,\lambda}$.
\end{proof}

We know that each $\calF_{\lambda,k}(V_1)$ is a unitary highest weight representation of $\frakm$. We can therefore determine the decomposition \eqref{eq:AbstractDecompFockSpace} by finding all highest weight vectors in $\calF_\lambda(V_1)$, i.e. solving the equation
\begin{equation}\label{eq:HighestWeightIdentity}
	d\omega_{\met,\lambda}(w)f = -\frac{1}{4\lambda}\sum_{\alpha,\beta}  \langle w, Q(v_\alpha, v_\beta)\bar e\rangle\partial_{v_\alpha}\partial_{v_\beta} f=0 \qquad \mbox{for all }w\in V_0.
\end{equation}

\section{The metaplectic representation restricted to $M$}\label{sec:MetaplecticRestrictionM}

We find the explicit decomposition of the restriction of the
metaplectic representation of $\sp(V_1,\omega)$ to the subalgebra
$\frakm$.
The decomposition is multiplicity-free. In order to describe the highest weights, we need the strongly orthogonal roots for the Hermitian symmetric pair $(\frakm,\frakl)$. We then use the Hua--Kostant--Schmid decomposition of polynomials on $V_1$, the $K$-finite vectors in the Fock space realization of the metaplectic representation, to obtain the decomposition into irreducible representations of $\frakm$.

\subsection{Cartan subalgebras and Harish-Chandra strongly orthogonal roots}

In order to identify a suitable Cartan subalgebra of $\frakl$ (and hence of $\frakm$), we make use of another Hermitian symmetric pair, the pair $(\fk', \fl')$ where
$$ \frakk'=[\frakk,\frakk] \qquad \mbox{and} \qquad \fl' = \{X\in \fk'; \ad (X) e\in \mathbb Ce\}. $$
The central element
$$
Z_0 := -\frac{i}2 \Big( D(e,\bar e) - \frac{2+ a(r-1) + b}{d} Z\Big) \in \fl',
$$
where $d=\dim_\CC V$, induces the Harish-Chandra decomposition
$$ \fk'^{\mathbb C} = \fq^+ +\fl'^{\mathbb C} + \fq^-, $$
where
\begin{equation}\label{fq-V1}
	\fq^+ = \{D(v,\bar e);v\in V_1\} \simeq V_1.
\end{equation}
Here $Z$ is the central element in of $\fk$ 
defining the complex
structure on $\fp$; see Section~\ref{sec:HermGrpHeisenbergParabolic}.

To find the Harish-Chandra strongly orthogonal roots for $(\frakk',\frakl')$, we follow \cite[Section 2.2]{Zha22} and fix a Jordan quadrangle $\{e, v_1, w, v_2\}$, $v_1, v_2\in V_1$. We have
\begin{gather}\label{quadruple}
	\begin{aligned}
		D(e,\bar w) &= D(v_1,  \bar v_2) = 0,\qquad & D(e,\bar {v_1})w &= v_2,\\
		D(v_1, \bar e)v_1 &= D(v_2,  \bar e)v_2 = 0,\qquad & D(v_1,\bar e) v_2 &= w.
	\end{aligned}
\end{gather}
Under the identification \eqref{fq-V1}, the elements
$E_1^+=D(v_1,\bar e), E_2^+= D(v_2, \bar e) \in   \fq^+ $
form a frame of minimal tripotents in $\fq^+ $,
as well as $E_1^-=D(e, \bar v_1), E_2^-= D(e, \bar v_2) \in   \fq^- $,
and
$$
[E_1^+, E_1^-]=
D(v_1, \bar v_1)- D(e, \bar e),\;
[E_2^+, E_2^-]=D(v_2, \bar v_2)- D(e, \bar e) \in \fl'^{\mathbb C}
$$
are commuting.

Now, $(\fk', \fl')$ is Hermitian symmetric of rank two, so it is a well-known fact that the elements $D(v_1, \bar v_1)- D(e, \bar e),D(v_2, \bar v_2)- D(e, \bar e)$ can be extended to a Cartan subalgebra for $\fl'^{\mathbb C}$, and the dual basis for $D(v_1, \bar v_1)- D(e, \bar e)$ and $D(v_2, \bar v_2)- D(e, \bar e)$, denoted by $2\alpha_1, 2\alpha_2$, yields the Harish-Chandra strongly orthogonal roots $\alpha_1,\alpha_2$ for $(\fk',  \fl')$.

To transfer this information to $\frakl$, we use the isomorphism
\begin{equation}\label{iso-two-l}
	\fl\to\fl', \quad X\mapsto X -\frac{1}{id}\tr\ad (X)|_{\fp^+}\cdot Z,
\end{equation}\\
where $Z\in\frakk$ is as in Section~\ref{sec:HermGrpHeisenbergParabolic}. Applying this isomorphism to the considerations for $\frakl'$ above gives a Cartan subalgebra $\frakt_\frakl$ of $\frakl^\CC$ and two non-compact positive roots $\beta_1<\beta_2$ which are the pull-backs of $\alpha_1$ and $\alpha_2$.

\subsection{The Hua--Kostant--Schmid decomposition}

By Lemma~\ref{lem:MetaplecticLieAlgAction}, the connected subgroup $L\subseteq M$ with Lie algebra $\frakl$ acts in the metaplectic representation $\omega_{\met,\lambda}$ by
\begin{equation}
	\omega_{\met,\lambda}(l)\zeta(z) = (\det_{V_1}l)^{-\frac{1}{2}}\zeta(l^{-1}z) \qquad (l\in L,z\in V_1,\zeta\in\calF_\lambda(V_1)).\label{eq:MetaplecticActionL}
\end{equation}
Apart from the determinant character, this is just the left regular action of $L$ on functions on $V_1$. Hence, to decompose $\omega_{\met,\lambda}|_L$, it suffices to decompose the space $\calP(V_1)$ of polynomials on $V_1$, the $K$-finite vectors in $\calF_\lambda(V_1)$, under the left regular action of $L$. This follows from a classical result of Hua--Kostant--Schmid (see e.g. \cite{Sch69}). We exclude the case $\frakg\simeq\su(p,q)$, because here $V_1$ is not simple, but the sum of two simple ideals, so the statement is slightly different.

\begin{lemm+}\label{prop:HKS}
	Assume $\frakg\not\simeq\su(p,q)$. Then the space $\mathcal P(V_1)$ of polynomials on $V_1$ decomposes under the action of $L$ as
	$$ \mathcal P(V_1) = \bigoplus_{m_1\ge m_2\ge 0} W(-m_1\beta_1-m_2\beta_2), $$
	where $W(-m_1\beta_1-m_2\beta_2)$ is irreducible of highest weight $-m_1\beta_1-m_2\beta_2$. Moreover, a highest weight vector in $W(-m_1\beta_1-m_2\beta_2)$ is given by
	$$ \Delta_{m_1,m_2}(z) = \Det_1(z)^{m_1-m_2}\Det_2(z)^{m_2}, $$
	where $\Det_1(z)=\langle z,v_1\rangle$ and $\Det_2(z)$ is the Jordan determinant of the maximal Jordan subalgebra of $V_1$ with identity element $v_1+v_2$.
\end{lemm+}

\begin{proof}
	Apply the Hua--Kostant--Schmid decomposition to the Hermitian symmetric pair $(\frakk',\frakl')$ and pull back the information to $\frakl$ via the isomorphism \eqref{iso-two-l}.
\end{proof}

\subsection{Decomposition of the metaplectic representation}

We now use Lemma~\ref{prop:HKS} to derive the explicit decomposition of the restriction of the metaplectic representation of $\sp(V_1,\RR)$ to $\frakm$. We restrict to $\lambda>0$, the case $\lambda<0$ is similar.

\begin{theo+}\label{thm:M_decomposition}
	Let $\lambda>0$ and assume $\frakg\not\simeq\su(p,q)$. Then the restriction $d\omega_{\met,\lambda}|_\frakm$ of the metaplectic representation of $\sp(V_1,\omega)$ to $\frakm$ decomposes as
	\begin{equation}
		\label{m-deco}
		d\omega_{\met,\lambda}|_\frakm= \bigoplus_{k=0}^\infty \tau_{-k\delta_0 -\frac 12 \zeta_0},
	\end{equation}
	where $\tau_\mu$ denotes the unitary highest weight representation of $\frakm$ with highest weight $\mu\in\frakt_\frakl^*$, $\delta_0=\beta_1$ is the lowest root of $V_1$ and $\zeta_0$ is the central character of $\fl$ obtained by restriction of the trace of the defining action of $\fraku(V_1)\subseteq\fsp(V_1,\omega)$ on $V_1$ to $\ft_{\fl}$. Moreover, the function $\Det_1(z)^k=\langle z,v_1\rangle^k$ is a highest weight vector in $\tau_{-k\delta_0-\frac{1}{2}\zeta_0}$.
\end{theo+}

\begin{proof}
	By Lemma~\ref{prop:HKS} it suffices to determine which highest weight vectors $\Delta_{m_1,m_2}$ for the action of $\frakl$ also are highest weight vectors for the action of $\frakm$, i.e. which $\Delta_{m_1,m_2}$ satisfy \eqref{eq:HighestWeightIdentity}. For this, we abbreviate
	$$ Q(\partial,\partial) = \sum_{\alpha,\beta}Q(v_\alpha,v_\beta)\partial_{v_\alpha}\partial_{v_\beta}. $$
	We claim that $\langle w,Q(\partial,\partial)\bar e\rangle\Delta_{m_1,m_2}=0$ for all $w\in V_0$ if and only if $m_2=0$. Assume first that $m_2=0$, then
	\begin{align*}
		\langle w, Q(\bar\partial, \bar\partial)\bar e\rangle\Delta_{m_1,0}(z) &= m (m-1) \langle w, Q(v_1, v_1)\bar e\rangle \Det_1(z)^{m-2}\\
		&= m (m-1) \langle w, D(v_1, \bar e)v_1	\rangle \Det_1(z)^{m-2}	= 0,
	\end{align*}
	since $D(v_1, \bar e)v_1=0$  by (\ref{quadruple}). We now prove the converse using the Cayley identity (see  \cite[Proposition VII.1.6]{FK94} or \cite{KS91}). Let $V_1=W_2+W_1+W_0$ be the 
	Peirce decomposition of $V_1$ with respect to the maximal
	tripotent $v_1+v_2$. Then $W_2$ is a Jordan algebra
	with identity element $v_1+v_2$ and $\Det_2$ its Jordan determinant. Let $\Det_2(\partial)$ be the
	corresponding differential
	operator. Since
	$\Det_2(u)$ is of
	weight $-(\beta_1 +\beta_2)$,
	the differential
	operator $\Det_2(\partial)$ is
	of weight $(\beta_1 +\beta_2)$
	and thus is given by
	$$\Det_2(\partial)
	=\langle w, Q(\partial, \partial)\bar
	e\rangle$$
	for some $w\in V_0$. Now, if $\Delta_{m_1,m_2}$ is a highest weight vector for $\frakm$, then in particular $\Det_2(\partial)\Delta_{m_1,m_2}=0$.
	Since this operator only contains differentiation in $W_2$ and $\Delta_{m_1,m_2}(x)$ only depends on the projection of $x\in V_1=W_2\oplus W_1\oplus W_0$ to $W_2$, it follows that
	$\Det_2(\partial)\Delta_{m_1,m_2}|_{W_2} =\Det_2(\partial)(\Delta_{m_1,m_2}|_{W_2} )=0$.
	But by the Cayley identity:
	$$
	\Det_2(\partial_{W_2}) (\Delta_{m_1,m_2}|_{W_2} )
	=\left(m_1+\frac {a_1}2\right) m_2\cdot \Delta_{m_1-1,m_2-1}|_{W_2},
	$$
	where $(a_1, b_1)$ is the Jordan characteristic
	of $V_1$.
	This implies $ (m_1+\frac {a_1}2) m_2=0 $ and hence $m_2=0$. The rest of the statement is clear with $\delta_0=\beta_1$ and $\zeta_0=\tr\ad|_{V_1}$, see \eqref{eq:MetaplecticActionL}.
\end{proof}

For the classical cases $\fg=\fso(2,n),\fso^\ast(2n)$,  the Lie algebra $\frakm$ is a product of two simple ideals and the decomposition~\eqref{m-deco} is related to the dual pair correspondence. For convenience, we make the decomposition explicit in Appendix~\ref{app:explicit_decomp}.

The missing case $\fg=\su(p,q)$ is treated in the next section.

\subsection{The case $\frakg=\su(p,q)$} 

This case is treated in detail in \cite{SW78}. Let $V=M_{p, q}(\mathbb C)$ be the Jordan triple system
of $p\times q$-matrices, and $\fg=\fu(p, q)$ the corresponding Lie algebra. We take $\fg=\fu(p, q)$ instead of $\fsu(p, q)$ as the former is more convenient. Note that irreducible unitary representations of $\fu(p,q)$ are irreducible unitary representations of $\su(p,q)$ on which the center $\fu(1)$ of $\fu(p,q)$ acts by a scalar.

We fix the matrix $e=E_{1q}$ as a minimal tripotent, then $V_1=M_{p-1,1}(\CC)\oplus M_{1,q-1}(\CC)\simeq\CC^{p-1}\oplus\CC^{q-1}$ and
$$
\frakm = \left\{\begin{pmatrix}is&&\\&X&\\&&is\end{pmatrix}:s\in\RR,X\in\fu(p-1, q-1)\right\} \simeq \fu(1)\oplus\fu(p-1,q-1). $$
We choose the subspace spanned by the
diagonal matrices $\{E_{jj}\}_{j=1}^{p+q}$
as a Cartan subalgebra $\ft^{\mathbb C}$
of
$\fg^{\mathbb C}$, which is
also Cartan subalgebra of
$\fk^{\mathbb C}$.
Let $\{\e_j\}_{j=1}^{p+q}$ be the dual basis and fix the ordering $\e_1>\ldots>\e_{p+q}$.
The rank of $\fm^{\mathbb C}$ is $p+q-1$ with 
$\ft^{\mathbb C}\cap\fm^{\mathbb C}$
being a Cartan subalgebra
of $\fm^\CC$. 

The same discussion as in the proof of Theorem~\ref{thm:M_decomposition} can be applied to each of the simple ideals $\CC^{p-1}$ and $\CC^{q-1}$ of $V_1$. We write $(z,w)\in\CC^{p-1}\oplus\CC^{q-1}=V_1$.

\begin{theo+}\label{thm:MetaplecticRestrictionMSU(p,q)}
	For $\lambda>0$ we have
	$$
	\omega_{\met,\lambda}|_\frakm=
	\bigoplus_{m=0}^\infty
	\tau_{
		-\frac 12 
		( \e_1+
		\cdots 
		+\e_p 
		-\e_{p+1}-
		\cdots 
		-\e_{p+q}) 
		-m \e_p 
	}
	\oplus\bigoplus_{n=1}^\infty \tau_{
		-\frac 12 
		( \e_1+
		\cdots 
		+\e_p 
		-\e_{p+1}-
		\cdots 
		-\e_{p+q}) 
		+ n \e_{p+1} 
	}
	$$
	where each $\tau_{\mu}$
	is an irreducible
	of $\frakm$ with highest weight $\mu$
	and highest weight vector $\zeta(z,w)=z_{p-1}^m$
	for $\mu
	=
	-\frac 12 
	( \e_1+
	\cdots 
	+\e_p 
	-\e_{p+1}-
	\cdots 
	-\e_{p+q}) 
	-m \e_p $ and $\zeta(z,w)=w_1^n$
	for
	$  \mu=
	-\frac 12 
	( \e_1+
	\cdots 
	+\e_p 
	-\e_{p+1}-
	\cdots 
	-\e_{p+q}) 
	+n \e_{p+1}.
	$
	Here all  $\{\e_j\}_{j=1}^{p+q}$
	are identified with their restriction to
	$\ft^{\mathbb C}\cap\fm^{\mathbb C}$.
\end{theo+}

\begin{rema+}
	The decomposition above has been found in this case by Sternberg--Wolf~\cite{SW78}, where the $\frakm$-representations are described explicitly as polynomials in the Fock space, which are products of homogeneous polynomials in the first $p-1$ and last $q-1$ variables with fixed difference of degrees $k\in\ZZ$. In this sense we will in the following write
	$$\calF_\lambda(V_1)=\bigoplus_{k\in \ZZ} \mathcal{F}_{\lambda,k}(V_1),$$
	where $\mathcal{F}_{\lambda,k}(V_1)$ corresponds to $\tau_\mu$ with $\mu=-\frac 12 
		( \e_1+
		\cdots 
		+\e_p 
		-\e_{p+1}-
		\cdots 
		-\e_{p+q}) 
		-k \e_p $
for $k \geq 0$ and $\mu=
	-\frac 12 
	( \e_1+
	\cdots 
	+\e_p 
	-\e_{p+1}-
	\cdots 
	-\e_{p+q}) 
	-k \e_{p+1}$
for $k\leq 0$. Note that if $p=1$ resp. $q=1$, then only $k\leq0$ resp. $k\geq0$ contributes.
\end{rema+}

\section{Weyl transform of the intertwining kernel $u_\alpha(z, t)$}\label{sec:Eigenvalue}

In this section we shall find the Weyl transform
of $u_\alpha(z, t)$. This generalizes earlier
results of Cowling \cite[Theorem 8.1]{Cow82} for the rank one case $\frakg=\mathfrak{su}(1,n)$. By $M$-invariance, $\sigma_\lambda(u_\alpha)$ acts by a scalar on the components in \eqref{m-deco}. Let $P_k$ be the orthogonal projection onto
the $k$-th component
in Theorems \ref{thm:M_decomposition} and \ref{thm:MetaplecticRestrictionMSU(p,q)}, 
and
let for $\mathfrak{g}\not\simeq\mathfrak{sp}(n,\RR),\mathfrak{su}(p,q)$,
\begin{equation}
	\label{eig-E}
	\mathcal{E}(\alpha, k):=\const \times (-\alpha-{b_1}-1)_k \frac{2^{2\alpha-1}\Gamma(\alpha+\frac{{a_1}+2}{2})\Gamma(\frac{2\alpha+d_1+1}{2})}{\Gamma(-\alpha)\Gamma(\alpha+{a_1}+{b_1}+2+k)}
\end{equation}
and for $\mathfrak{g}\simeq\mathfrak{su}(p,q)$
\begin{equation}
	\label{eig-E-su(p,q)}
	\mathcal{E}(\alpha, k):=\const \times (-1)^k \frac{\Gamma(\alpha+1)\Gamma(2\alpha+p+q-1)}{\Gamma(-\alpha)\Gamma(\alpha+p+k)\Gamma(\alpha+q-k)}.
\end{equation}
Here the constant is positive and only depends on the normalization of the measures involved and on the structure constants $(a_1,b_1,d_1)$ (see \eqref{eq:polar_coordinates} and Proposition~\ref{prop:integral_formula} for a more detailed description of the constant).
The rest of this section is dedicated to prove the following theorem.

\begin{theo+}\label{thm:eigenvalue_formula}
	The Weyl transform of $u_\alpha(z, t)$
	is for $\mathfrak{g}\not\simeq\mathfrak{sp}(n,\RR), \mathfrak{su}(p,q)$
	$$
	\sigma_\lambda (u_\alpha)
	=|\lambda|^{-2\alpha-d_1-1}\sum_{k=0}^\infty \mathcal{E}(\alpha, k) P_k
	$$
	and for $\mathfrak{g}\simeq\mathfrak{su}(p,q)$
		$$
	\sigma_\lambda (u_\alpha)
	=|\lambda|^{-2\alpha-d_1-1}\sum_{k=-\infty}^\infty \mathcal{E}(\alpha, k) P_k.
	$$
\end{theo+}
The missing case $\mathfrak{g}\simeq \mathfrak{sp}(n,\RR)$ is actually the easiest one and will be treated separately in Section~\ref{sec:symplectic_case}. The corresponding result is Theorem~\ref{thm:SP(n)_eigenvalues}.

To find $\mathcal{E}(\alpha, k)$ we first show a recursion relation for $\mathcal{E}(\alpha, k)$ as a sequence in $k$ in Section~\ref{sec:Recursion}, before evaluating explicitly at $k=0$ in Section~\ref{sec:Evaluationk=0} to prove the stated formula.

\subsection{A recursion formula}\label{sec:Recursion}

Let $\mathfrak{g}\not\simeq\mathfrak{sp}(n,\RR)$, then $V_1$ is a
Jordan triple system of rank 2 with characteristic $(a_1,b_1)$ for
$\frakg\not\simeq\su(p,q)$, and for $\frakg\simeq\su(p,q)$ it is the
direct sum of two Jordan triple systems of rank $1$ with
characteristics $(0,b_1)=(0,p-2)$ and $(0,b_1')=(0,q-2)$. We can treat
both cases simultaneously by a slight abuse of notation. We assume
throughout this section that $V_1$ is irreducible, i.e. in the case
$\frakg\simeq\su(p,q)$ we consider $V_1$ to be one of the two
components with characteristics $(0,b_1)\in \{(0,p-2),(0,q-2)\}$.
To simplify the formulas, we define $b_1'=b_1$ in case $\frakg\not\simeq\su(p,q)$ and $a_1=0$ in case $\frakg\simeq\su(p,q)$. In particular, we have
$$ d_1 = \dim V_1 = a_1+b_1+b_1'+2 = \begin{cases}a_1+2b_1+2&\mbox{for $\frakg\not\simeq\su(p,q)$,}\\b_1+b_1'+2&\mbox{for $\frakg\simeq\su(p,q)$.}\end{cases} $$

Recall that in our normalization of
$\langle z, w\rangle$ on $V$, a minimal tripotent has norm $1$.
We fix our identification $\bar\fn_1\simeq V_1$ along with
their complex structures via $\bar n_v\mapsto v\in V_1$.

Recall again our convention
that $V_1$ is viewed as a Jordan subtriple
system of $V$ so that a minimal tripotent
in $v_1$ is also a minimal tripotent in $V$.
In particular we have
\begin{equation}
	\label{genus-V1}
	p_1:=\tr_{V_1} D(v_1, \bar v_1)= 2 + a_1 +b_1.  
\end{equation}
We shall need also the evaluation
of $\tr_{V_1} D(e, \bar v_1) D(v_1, \bar e)$.

\begin{lemm+}\label{lem:TraceD(e,v1)D(v1,e)}
	\label{prep-2}$\tr_{V_1} D(e, \bar v_1) D(v_1, \bar e)= b_1+1$.
\end{lemm+}

\begin{proof} 
	For any $u\in V_1$, we find using \eqref{eq:DvsIP}:
	\begin{equation*}
		\begin{split}
			\langle   D(e, \bar v_1)     D(v_1, \bar e)
			u,    
			u\rangle
			& =   \langle   D(v_1, \bar e) D(e, \bar v_1)   u,u\rangle + \langle
			[ D(e, \bar v_1) ,
			D(v_1, \bar e)]u, 
			u\rangle
			\\
			&    =
			\langle  u, v_1\rangle \langle  D(v_1, \bar e) e, u\rangle
			+\langle (D(e,  \bar e) - D(v_1, \bar v_1)) u, 
			u\rangle
			\\
			&= |\langle  u, v_1\rangle|^2+\Vert u\Vert^2 - \langle D(v_1, \bar v_1) u, 
			u\rangle.
		\end{split}
	\end{equation*}
	Consider the Peirce decomposition of $V_1$ with respect to the tripotent $v_1$: $V_1=
	W_2\oplus W_1\oplus W_0$ with $W_2 = \mathbb Cv_1$. Then the above computation shows: if $u=v_1\in
	W_2$
	then
	$ D(v_1, \bar e)u=0$; if $u\in W_1$
	then
	$\langle   D(e, \bar v_1)     D(v_1, \bar e)
	u,    
	u\rangle=0$;
	if $u\in W_0$
	then $\langle   D(e, \bar v_1)     D(v_1, \bar e)
	u,    
	u\rangle=\|u\|^2$. Thus
	$\tr_{V_1} D(e, \bar v_1) D(v_1, \bar e)= \dim
	W_0=
	b_1+1$ since $V_1$ is a Jordan triple of rank two
	and $v_1$ is a minimal tripotent.
\end{proof}

Recall the Fock space $\calF_\lambda(V_1)$, its irreducible $\frakm$-submodules $\calF_{\lambda,k}(V_1)$, and the Weyl transform $f\mapsto\sigma_\lambda(f)$ from Section~\ref{sec:fourier_transform}. By Theorem~\ref{thm:M_decomposition}, the unit vector $\zeta_k\in\calF_{\lambda,k}(V_1)$ given by
$$ \zeta_k (w) = \sqrt{	\frac	{(2|\lambda|)^{d_1+k}}	{ \pi^{d_1} k!}}w_1^k \qquad (z\in V_1) $$ 
is a highest weight vector for the action of $\frakm$. Here we write $w_1=\langle w,v_1\rangle$.

\begin{lemm+}
	\label{normal-const}
	For $h\in C_c^\infty((0,\infty))$ let $f\in \mathcal S(\bar\fn)=\mathcal S(V_1 \times 
	\mathbb R)$ be defined by
	$$
	f(z, s)=\int_0^\infty h(\lambda) e^{i\lambda s - |\lambda|
          |z|^2} \bar{z}_1^k \lambda^{d_1}
        \,d\lambda.
	$$
	Then the Weyl transform
	$\sigma_\lambda(f)
	\in\HS(\calF_\lambda(V_1))$ of $f$ is a scalar multiple of the rank one operator $\zeta_0\otimes\zeta_k^*$, more precisely:
	$$
	\sigma_\lambda(f)\zeta = 
	\frac{1}{c}\sqrt{\frac{k!}{(2|\lambda|)^k}}h(\lambda) \langle\zeta,\zeta_k\rangle
	\zeta_0 \qquad (\zeta\in\calF_\lambda(V_1)),
	$$
	where $c$ is the constant in \eqref{eq:InversionWeylTransform}.
\end{lemm+}

\begin{proof} We take $X_\lambda$
	the rank one operator
	$$
	X_\lambda = \sqrt{\frac{k!}{(2|\lambda|)^k}}h(\lambda) 
	\zeta_0\otimes \zeta_k^\ast:
	\zeta\mapsto\sqrt{\frac{k!}{(2|\lambda|)^k}}h(\lambda)\langle\zeta,\zeta_k\rangle\zeta_0,
	$$
	and compute the trace
	\begin{multline*}
			\tr_{\calF_\lambda}(\sigma_\lambda(-z, -s)  X_\lambda )
		=\langle \sigma_\lambda(-z, -s)  X_\lambda {\zeta_k}, 
		\zeta_k\rangle_{\mathcal F_\lambda}
		=\sqrt{\frac{k!}{(2|\lambda|)^k}}h(\lambda)  \langle 
		\zeta_0, 
		\sigma_\lambda(z, s)  \zeta_k\rangle_{\mathcal F_\lambda}
		\\
		=h(\lambda) e^{i\lambda s} e^{-|\lambda| |z|^2}  \bar{z}_1^k.
	\end{multline*}
	It follows by the inversion  formula
	\eqref{eq:InversionWeylTransform} that
	the Weyl transform of $f_k$ is precisely $X_\lambda$.
\end{proof}

We say that a homogeneous  polynomial $p(z)$
in $(z, \bar z)\in V_1\times \bar{ V_1}$ is
of bi-degree $(p, q)$ if $p(\lambda z) =
\lambda^p {\bar \lambda}^q p(z)$ for all $\lambda\in\CC$.
If $p$ and $q$ are two polynomials 
of different bi-degrees then $e^{-|\lambda| |z|^2}p(z)$
and $e^{-|\lambda| |z|^2}q(z)$ 
are orthogonal in the space $L^2(V_1)$ and in particular 
they are linearly independent. We  call 
the function $e^{-|\lambda| |z|^2}p(z)$ 
of bi-degree $(p, q)$ if 
$p(z)$  is of bi-degree $(p, q)$.

\begin{prop+}\label{prop:OrthogonalDecompositions} In $L^2(V_1)$, we have the following orthogonal
	decompositions:
	\begin{enumerate}
		\item\label{prop:OrthogonalDecompositions1} $\displaystyle z_1^{k+1} |z_1|^2 e^{-|\lambda| |z|^2} = \frac{k+2}{2|\lambda|} z_1^{k+1}e^{-|\lambda| |z|^2}   + \textup{Rest}$,
		\item\label{prop:OrthogonalDecompositions2}
                  $\displaystyle z_1^{k+1} |z_p|^2 e^{-|\lambda|
                    |z|^2} = \frac{1}{2|\lambda|}z_1^{k+1}
                  e^{-|\lambda| |z|^2}  +\textup{Rest}$, \,  $p\neq1$,
		\item\label{prop:OrthogonalDecompositions3} $\displaystyle z_1^{k+1} |z|^2  e^{-|\lambda| |z|^2} = \frac{k+d_1+1}{2|\lambda|}z_1^{k+1}e^{-|\lambda| |z|^2}	+ \textup{Rest}$,
		\item\label{prop:OrthogonalDecompositions4} $\displaystyle \langle D(z, \bar z) z,v_1 \rangle z_1^k e^{-|\lambda| |z|^2} = \frac{k+  p_1 }{|\lambda|}z_1^{k+1}e^{-|\lambda| |z|^2} 	+ \textup{Rest}$,
		\item\label{prop:OrthogonalDecompositions5} $\displaystyle \langle D(e, \bar v_1) D(z, \bar e) z, z\rangle	z_1^{k} e^{-|\lambda||z|^2} = \frac{b_1+1}{|\lambda|}z_1^{k+1}e^{-\lambda |z|^2}    + \textup{Rest}$,
	\end{enumerate}
	where the terms $\textup{Rest}$ are orthogonal to the leading term $z_1^{k+1} e^{-|\lambda| |z|^2}$. Complex conjugating each identity also produces orthogonal decompositions.
\end{prop+}

\begin{proof}
	\eqref{prop:OrthogonalDecompositions1} and \eqref{prop:OrthogonalDecompositions2} follow easily by computing the relevant inner products, and \eqref{prop:OrthogonalDecompositions3} is a consequence of \eqref{prop:OrthogonalDecompositions1} and \eqref{prop:OrthogonalDecompositions2}. We prove the remaining two identities.
	
	Ad \eqref{prop:OrthogonalDecompositions4}: Choose an orthonormal basis $\{u_p\}_{p=1,\ldots,d_1}$ of $V_1$ with $u_1=v_1$.
	Then
	$$
	\langle  D(z, \bar z) z, v_1
	\rangle
	z_1^k 
	e^{-|\lambda| |z|^2}
	=\sum_{p, q, m}  
	\langle D(u_p, \bar u_m) u_q, v_1\rangle 
	{z_p z_q} \bar{z}_m z_1^k 
	e^{-|\lambda| |z|^2},
	$$
	and the summands are symmetric in $p$ and $q$.
	All terms   are orthogonal to $z_1^{k+1} 
	e^{-|\lambda| |z|^2}$ except the ones for $(p,q)=(1,m)$ or $(p,q)=(m,1)$.
	We divide them into the three disjoint cases
	$p=q=m=1$,  $p=1, q=m>1$, and 
	$q=1, p=m>1$.
	First, let $p=q=m=1$. Then, by \eqref{prop:OrthogonalDecompositions1}:
	$$
	\langle D(u_p, \bar u_m) u_q, v_1\rangle 
	{z_p z_q} \bar{z}_m     z_1^k =
	2|z_1|^2     z_1^{k+1} 
	e^{-\lambda |z|^2}
	= \frac{2(k+2)}{2|\lambda|}   z_1^{k+1} 
	e^{-|\lambda| |z|^2} + \textup{Rest}.
	$$
	Next, let $p=1$ and $q=m>1$, 
	then
	$$
	\sum_{q>1}  
	\langle D(v_1, \bar u_q) u_q,v_1\rangle 
	|{z_q}|^2     z_1^{k+1} 
	e^{-|\lambda| |z|^2}
	=  \sum_{q>1}
	\langle D(v_1, \bar v_1)u_q,  u_q\rangle 
	|{z_q}|^2    z_1^{k+1} 
	e^{-|\lambda| |z|^2}.
	$$
	Its projection onto $z_1^{k+1} 
	e^{-|\lambda| |z|^2}$
	is, by \eqref{prop:OrthogonalDecompositions2}
	\begin{multline*}
		\frac{1}{2|\lambda|} z_1^{k+1} 	e^{-|\lambda| |z|^2}\sum_{q>1}    \langle D(v_1, \bar v_1)u_q,  u_q\rangle =  \frac{1} {2|\lambda|}	z_1^{k+1} 	e^{-|\lambda| |z|^2}\left(\sum_{q= 1}^{d_1}    \langle D(v_1, \bar v_1)u_q,  u_q\rangle -2\right)\\
		=  \frac{1} {2|\lambda|} z_1^{k+1} 	e^{-|\lambda| |z|^2}(\tr_{V_1} D(v_1, \bar v_1) -2)	= \frac{1}{2|\lambda|} z_1^{k+1} e^{-|\lambda| |z|^2}	(p_1 -2).
	\end{multline*}
	The third case $q=1, p=m>1$ is
	the same as the second case.
	Hence
	\begin{align*}
		\langle  D(z, \bar z) z, v_1\rangle	z_1^k e^{-|\lambda| |z|^2} &= \frac{2(k+2) + 2 (p_1-2) }	{2|\lambda|}   z_1^{k+1} 	e^{-|\lambda| |z|^2} + \textup{Rest}\\
		&= 	\frac{k +  p_1 }	{|\lambda|}	z_1^{k+1} 	e^{-|\lambda| |z|^2} + \textup{Rest}.
	\end{align*}
	
	Ad \eqref{prop:OrthogonalDecompositions5}: Using the same orthonormal basis as before, we have
	\begin{equation*}
			\langle  
			D(e, \bar v_1) D(z,  \bar e) z, z
			\rangle z_1^{k}  e^{-|\lambda| |z|^2}   \\
			= 
			\sum_{p, q, m}
			\langle D(e, \bar v_1) D(u_p,  \bar e) u_q, u_m
			\rangle
			z_1^{k}  {  z_p z_q} \bar{z}_m  e^{-|\lambda| |z|^2}.
	\end{equation*}
	Each summand is orthogonal to
	$z_1^{k+1}e^{-|\lambda| |z|^2}$ except when
	$(p, q, m)$ is among the following three disjoint
	cases:  $(p, q, m)=(1, 1, 1)$;  $(p, q, m)=(1, q, m)$,  $q=m>1$;
	$(p, q, m)= (p, 1, m) $, $p=m>1$.
	By symmetry, the second and the third cases
	produce the same orthogonal projection.
	First, let $(p, q, m)=(1, 1, 1)$,
	then
	$  \langle D(e, \bar v_1) D(u_p,  \bar e) u_q, u_m\rangle
	=  \langle D(e, \bar v_1) D(v_1,  \bar e) v_1, v_1\rangle=0
	$ since  $D(v_1, \bar e) v_1=0.$
	Next, let $(p, q, m)=(1, q, m)$,  $q=m>1$, then
	the sum over these terms is
	\begin{align*}
		\sum_{q>1}	\langle 	D(e, \bar v_1) D(v_1,  \bar e) u_q,	u_q   \rangle	z_1^{k+1} |z_q|^2	e^{-\lambda |z|^2} &= \frac{1}{2\lambda}  z_1^{k+1} e^{-\lambda |z|^2}	\sum_{q>1} \langle 	D(e, \bar v_1) D(v_1,  \bar e) u_q,	u_q   \rangle + \textup{Rest}\\
		&=\frac{1}{2\lambda}  z_1^{k+1} e^{-\lambda |z|^2}	\sum_{q=1}^{d_1} \langle 	D(e, \bar v_1)D(v_1,  \bar e) u_q,	u_q   \rangle + \textup{Rest}\\
		&=\frac{1}{2\lambda}  z_1^{k+1} e^{-\lambda |z|^2}	\tr_{V_1}D(e, \bar v_1) D(v_1,  \bar e) + \textup{Rest},
	\end{align*}
	again due to the fact that $D(v_1,  \bar e) v_1=0$. Finally, we get
	\begin{align*}
		\langle  	D(e, \bar v_1) D(z,  \bar e) z, z	\rangle z_1^{k}	  e^{-|\lambda| |z|^2} &= \frac{2\tr_{V_1}D(e, \bar v_1) D(v_1,  \bar e)}	{2\lambda} 	z_1^{k+1} e^{-|\lambda| |z|^2} + \textup{Rest}\\
		&= \frac{\tr_{V_1}D(e, \bar v_1) D(v_1,  \bar e)}{\lambda}  z_1^{k+1} e^{-|\lambda| |z|^2} + \textup{Rest},
	\end{align*}
	the factor $2$ being due to the symmetry of $p$ and $q$. The claim now follows with Lemma~\ref{lem:TraceD(e,v1)D(v1,e)}.
\end{proof}  

We now state the main theorem of this section:

\begin{theo+}\label{thm:recursion}
	Let $T_\nu : \pi_\nu \to \pi_{-\nu}$
	be any continuous $G$-intertwining operator, then
	\begin{equation}
		\sigma_\lambda(T_\nu )=\sum_k t(\nu,k;\lambda)P_k\label{eq:TnuDiagonal}
	\end{equation}
	with $t(\nu,k;\lambda)\in \CC$ satisfying the recursive relation
	\begin{equation}
		L(\nu, k+1)  t(\nu, k+1; \lambda) = L(-\nu, k+1)   t(\nu,k;\lambda),\label{eq:RecursiveRelation}
	\end{equation}
	where
		\begin{enumerate}[label=(\roman*)]
		\item For $\mathfrak{g}\not\simeq\mathfrak{sp}(n,\RR), \mathfrak{su}(p,q)$ and $k\geq 0$:
		$$
		L(\nu, k+1) = \nu+a_1+1 +2k.
		$$
		\item For $\mathfrak{g}\simeq \mathfrak{su}(p,q)$ and $k\in \ZZ$:
		$$
		L(\nu, k+1) = \nu +q-p+1 +2k.
		$$
	\end{enumerate}
\end{theo+} 

The proof requires some long computations and we first briefly sketch the idea. Since $T_\nu$ is in particular $M$-intertwining and $\omega_{\met,\lambda}|_M$ is the multiplicity-free direct sum of $\calF_{\lambda,k}(V_1)$, it follows that $\sigma_\lambda(T_\nu)$ is of the form \eqref{eq:TnuDiagonal}. Using Lemma~\ref{normal-const}, we choose Schwarz functions $f_{k}(z, s)$ on $\bar{N}$
whose Weyl transform $\sigma_\lambda(f_k)$ is a rank one operator mapping the polynomial $w_1^k$ to a multiple of the constant function $w_1^0=1$, all in the Fock space $\mathcal F_\lambda(V_1)$. Since $w_1^k\in\calF_{\lambda,k}(V_1)$, we have
$\sigma_\lambda(T_\nu f_k) 
=t(\nu, k;\lambda)\sigma_\lambda(f_k)$. Since $T_\nu$ is in particular $N$-intertwining, we have
\begin{equation}
	T_\nu (d\pi_\nu(n_{v})f_k)=d\pi_{-\nu}(n_{v}) (T_\nu f_k) \qquad (v\in V_1).\label{eq:IntertwiningRelationN}
\end{equation}
The map $v\in V_1\mapsto d\pi_\nu(n_{v})$
is $\RR$-linear and we consider its complexification
and the corresponding $(1, 0)$-part
$d\pi_{\nu}(n_{v}^{(1, 0)})$, with 
$$n_{v}^{(1, 0)}=\frac{1}2(n_{v} -i
n_{iv} )\in\frakn^\CC;$$ in other words we consider
the complex linear part in the formula for 
$d\pi_\nu(n_{v})$.
We shall compute
$d\pi_\nu(n_{v}^{(1, 0)})f_k$ for $v=v_1$ and  prove that
it is the form $iL(\nu, k+1) f_{k+1} + R_k$,
with the rest term $R_k$ satisfying that
its Weyl transform is orthogonal to the Weyl
transform of $f_{k+1}$, in the space of
Hilbert-Schmidt operators. Taking Weyl
transform of the intertwining relation we
obtain then the recursion formula.

\begin{proof} It is enough
	to prove the identity for $\lambda >0$.
	Fix the Jordan quadrangle as above.
	The intertwining relation \eqref{eq:IntertwiningRelationN} for $n_{v_1}^{(1, 0)}\in \fn^{\mathbb C}$ is
	\begin{equation}
		\label{eq:intertw-v1}
		T_\nu  \left(d\pi_\nu   (n_{v_1}^{(1, 0)}) f\right)
		= d\pi_{-\nu} (n_{v_1}^{(1, 0)}) \left(  T_\nu f    \right),
	\end{equation}
	for any Schwarz class function $f\in \mathcal S(V_1\times \mathbb R)\subseteq I(\nu)$.
	We choose a special class of functions $f$ in order to find the recursion formula. Let $\eta=\eta(z, s; \lambda)=e^{i\lambda s-\lambda |z|^2}$
	which will appear in all formulas below, and
	$$
	f_{k}(z, s)=
	\int_0^\infty  h(\lambda) 
	F_k(z, s; \lambda) 
	\lambda^{d_1}\,d\lambda,
	$$
	where
	$$
	F_k(z, s; \lambda)=\eta(z, s;\lambda)
	\bar{z}_1^k
	$$
	and   $h(\lambda)$ is as in Lemma        	\ref{normal-const}.
       In particular, $f_k$ is in the Schwartz class
	$\mathcal S(\bar\fn)=\mathcal S(V_1 \times \mathbb R)$.
	We have
	$$
	d\pi_\nu   (n_{v_1}^{(1, 0)}) f_k
	=  \int_0^\infty
	h(\lambda) 
	d\pi_\nu   (n_{v_1}^{(1, 0)})  F_k(z, s; \lambda) 
	\lambda^{d_1}\,d\lambda
	$$
	and we shall prove that
	\begin{equation}
		\label{eq:shift}
		\left   ( d\pi_\nu   (n_{v_1}^{(1, 0)})  F_k\right)(z, s; \lambda)
		= i L(\nu, k+1)
		F_{k+1}(z, s; \lambda) + R_k(z, s;\lambda)\eta(z,s;\lambda),
	\end{equation}
	where the rest term $R_k(z,s;\lambda)$  is
	a polynomial of $(z, \bar z)$ and 
	$    R_k(z, s; \lambda)\eta(z, s; \lambda)
	$ is orthogonal to $F_{k+1}$ in the space $L^2(V_1)$.
	Thus
	$$
	d\pi_\nu(n_{v_1}^{(1, 0)})    f_{k}(z, s)=
	i L(\nu, k+1)   f_{k+1}(z, s)
	+ R_k(z, s),
	$$
	where
	$$
	R_k(z, s)=    \int_0^\infty {R_k(z, s; \lambda)}
	\eta(z, s; \lambda)
	\lambda^{d_1}\,d\lambda.
	$$
	It then follows from the Plancherel formula \eqref{eq:plancherel_heisenberg}
	that
	$$
	\langle \sigma_\lambda( R_k), \zeta_0\otimes
	\zeta_k^\ast\rangle_{\HS(\mathcal F_\lambda)}
	=\langle \sigma_\lambda( R_k)\zeta_k, \zeta_0\rangle_{
		\mathcal F_\lambda     }=0.
	$$
	We perform Weyl transform
	$\sigma_\lambda$ on the equation (\ref{eq:intertw-v1}) with $f=f_k$,
	we let it act on the element $\zeta_k\in \mathcal F_{\lambda,k}(V_1)$, and evaluate its inner product with $\zeta_0$ in $\calF_\lambda(V_1)$.
	By   Lemma~\ref{normal-const} above we get
	$$
	h(\lambda) 
	i L(\nu, k+1)   t(\nu, k+1; \lambda )
	=
	h(\lambda)   i L(-\nu, k+1) t(\nu,k; \lambda).
	$$
	This identity holds for any $h(\lambda)$, so \eqref{eq:RecursiveRelation} follows.

	The rest of the proof is to compute
	$d\pi_\nu   (n_{v_1}^{(1, 0)})  F_k(z, s; \lambda) $.
	The function  $d\pi_\nu   (n_{v_1}^{(1, 0)}) F_{k} $
	is a linear combination of   $F_{k+1}$   
	and a rest term   
	of the
	form
	$$R_k(z,s;\lambda)\eta(z,s;\lambda),
	$$
	where $R_k(z,s;\lambda)$ is a sum of polynomials of $(z, \bar z)$
	of bi-degrees $(0, k+1)$, $(1, k)$, $(0, k-1)$
	and $(2, k+1)$, which are orthogonal to $F_{k+1}$.
	So we shall  perform  orthogonalization in the space $L^2(V_1)$
	and project onto the vector $F_{k+1}$.
	If two functions $f$ and $g$ are
	such that $f-g$ is orthogonal to functions
	of bi-degree $(0, k+1)$, equivalently
	if then have the same orthogonal projection
	on functions of bi-degree $(0, k+1)$, then we write $f\equiv g$.
	
	Let $\nabla_vF(x)=\left.\frac{d}{dt}\right|_{t=0}F(x+tv)$ denote the derivative of $F$ at $x$ in the direction $v$. With this notation, the Lie algebra action of $n_{v_1}$ is given
	in \cite[Corollary 3.2.2]{Fra22}:
	\begin{multline*}
		d\pi_{\nu}(n_{v_1})F_{k} = \frac{\nu +\rho}{2}  \omega(z, v_1) F_k
			+ \frac 12 \omega(\Psi(z), v_1) \frac{d}{ds} F_k\\
			+\nabla_{\omega(z, v_1) z} F_k+\nabla_x F_k
			+\frac s2\omega(z, {v_1})\frac{d}{ds} F_{k} -s \nabla_{v_1}
			F_{k},
	\end{multline*}
	where $[\mu(z),\bar{n}_{v_1}] =\bar{n}_{x}$,
	with $x=
	i( |z|^2 v_1 - 2D(z, \bar z) v_1) +i D(e, \bar v_1) D(z, 
	\bar e) z$ by Proposition~\ref{prep-1-6}. The complex linear and conjugate linear terms in $x$
	are $x^{(1, 0)}=i( |z|^2 v_1 - 2D(z, \bar z) v_1)$
	and $x^{(0, 1)}=i D(e, \bar v_1) D(z, 
	\bar e) z$. With \eqref{eq:Defomega}, it follows that the complex linear part of $v\mapsto d\pi_\nu(n_v)F_k$ at $v=v_1$ is
	$$ d\pi_{\nu}(n_{v_1}^{(1, 0)})F_{k}= A + B + C + D + E, $$
	where
	\begin{align*}
		A &= i (\nu +\rho) \langle v_1, z\rangle F_{k}	=  i (\nu +\rho) \bar{z}_1 F_{k} = i (\nu +\rho) F_{k+1},\\
		B &=  i \langle v_1, \Psi(z)\rangle \frac{d}{ds} F_k,\\
		C &= (2i) \langle v_1, z\rangle\nabla_{z} F_k=(2i) \bar{z}_1(\partial_{z} F_k + \bar\partial_z F_k),\\
		D &= \bar\partial_{i D(e, \bar v_1) D(z, \bar e) z} F_k + \partial_{i |z|^2 v_1 -D(z, \bar z) v_1} F_k,\\
		E &= (is)\langle v_1, z\rangle\frac{d}{ds} F_{k} -s \partial_{v_1}F_{k},
	\end{align*}
	with $\partial_v=\frac{1}{2}(\nabla_v-i\nabla_{iv})$ and $\bar\partial_v=\frac{1}{2}(\nabla_v+i\nabla_{iv})$.
	For each term, we find the orthogonal projection onto $F_k(z,s;\lambda)=\bar{z}_1^{k+1}\eta(z, s; \lambda)$.
	
	The last term $E$ is
	\begin{equation*}
		E =(is)\langle v_1, z\rangle (i\lambda) \bar{z}_1^k\eta(z, s;\lambda) -s  (-\lambda) \langle v_1,  z_1\rangle\bar{z}_1^k\eta(z, s;\lambda) = 0.
	\end{equation*}
	
	The second term, using Proposition~\ref{prep-1-4}:
	$$
	B
	=(-i\lambda) |z|^2 \bar{z}_1^{k+1}
	\eta(z, s; \lambda) 
	+(i\lambda)  \langle v_1, D(z, \bar z) z\rangle \bar{z}_1^{k}
	\eta(z, s; \lambda).
	$$
	We leave this as it is and will combine it with the term $D$.

	The third term $C$ is
	$$
	C
	=(2i) \bar z_1(\partial_{z} F_k + \bar \partial_z F_k)
	$$
	and $\partial_{z}  =\sum z_q \partial_q$ being the
	holomorphic Euler operator. Then
	$$
	C= (2i) \bar z_1 \left[
	k\bar {z_1}^{k} +(-2\lambda) \bar {z_1}^{k} |z|^2
	\right]
	\eta(z, s; \lambda)
	= (2i)  k\bar {z_1}^{k+1}
	\eta(z, s; \lambda)
	-(4i\lambda)  \bar{z}_1^{k+1} |z|^2
	\eta(z, s; \lambda).
	$$
	Its $(0, k+1)$-term is given, using Proposition~\ref{prop:OrthogonalDecompositions}~\eqref{prop:OrthogonalDecompositions3}, by
	\begin{equation*}
		C \equiv (2i)k \bar {z_1}^{k+1}\eta(z, s; \lambda)- (4i\lambda) \frac{1}{2\lambda} (k+d_1+1)  \bar{z}_1^{k+1}\eta(z, s; \lambda) = -2i(d_1+1) F_{k+1}.
	\end{equation*}
		
	The last term $D$ requires some more computations.  We write
	$$
	D=D_1 + D_2 + D_3
	$$
	with
	\begin{align*}
		D_1 &= (\bar\partial_{i D(e, \bar v_1) D(z, \bar e) z} \bar{z}_1^{k}) \eta(z, s;\lambda),\\
		D_2 &= \bar{z}_1^{k}(\bar\partial_{i D(e, \bar v_1) D(z, \bar e) z} \eta(z,s;\lambda)),\\
		D_3 &= \bar{z}_1^k(\partial_{i(|z|^2 v_1- 2D(z, \bar z) v_1)}\eta(z,s;\lambda)).
	\end{align*}
	We treat these three terms separately. The term
	$$
	D_1= -i k \bar{z}_1^{k-1}
	\langle v_1, D(e, \bar v_1) D(z, \bar e)z\rangle 
	\eta(z, s; \lambda)
	=  -i k \bar{z}_1^{k-1}
	\langle D(v_1, \bar e)v_1,  D(z, \bar e)z\rangle 
	\eta(z, s; \lambda)=0
	$$
	since $D(v_1, \bar e)v_1=0$.
	We further have
	$$
	D_2   = 
	(i\lambda)
	\langle z, D(e, \bar v_1) D(z, 
	\bar e) z   \rangle  \bar{z}_1^{k}
	\eta(z, s; \lambda),
	$$
	which is treated in Proposition~\ref{prop:OrthogonalDecompositions}~\eqref{prop:OrthogonalDecompositions5} above, so that
	$$
	D_2 
	\equiv
	i(b_1+1)\bar{z}_1^{k+1}
	\eta(z, s; \lambda).
	$$
	
	Finally,
	\begin{equation*}
		\begin{split}
			D_3&= (-\lambda) \langle i(|z|^2 v_1- 2D(z, \bar z) v_1), z\rangle \bar{z}_1^{k}\eta(z, s; \lambda)\\
			&= (-i\lambda) |z|^2
			\bar{z}_1^{k+1}
			\eta(z, s; \lambda)
			+ (2i\lambda)
			\bar{z}_1^{k}
			\langle v_1, D(z, \bar z) z\rangle  \eta(z, s; \lambda) .
		\end{split}
	\end{equation*}
	Combining with the term $B$,
	we have
	\begin{align*}
		B+ D_3 &= (-2i\lambda) |z|^2 \bar{z}_1^{k+1}	\eta(z, s; \lambda) 	+(3i\lambda)  \langle v_1, D(z, \bar z) z\rangle \bar{z}_1^{k}	\eta(z, s; \lambda)\\
		&\equiv	i(2k + 3p_1 -d_1-1)\bar{z}_1^{k+1} \eta(z, s; \lambda),
	\end{align*}
	where we have used Proposition~\ref{prop:OrthogonalDecompositions}~\eqref{prop:OrthogonalDecompositions3} and \eqref{prop:OrthogonalDecompositions4} in the last step. 
	Altogether this gives
	$$
	d\pi_{\nu}(
	n_{v_1}^{(1, 0)}) F_{k}
	= iL(\nu; k+1) F_{k+1} + \textup{Rest},
	$$
	where
	$$
	L(\nu, k+1)=
	(\nu +\rho) + 2k-3(d_1+1) + b_1+1 +3p_1
	=(\nu +\rho) +2k -2 -2b_1.
	$$
	For $\mathfrak{g}\not\simeq \mathfrak{su}(p,q)$ we have $\rho=a_1+2b_1+3$, which shows the claim in this case.
	For $\mathfrak{g}\simeq \mathfrak{su}(p,q)$, the above computation is still valid for each of the two simple ideals of $V_1$, and we obtain two different recursions for $k\geq0$:
		$$
	L(\nu, k+1)  t(\nu,k+1; \lambda) 
	= L(-\nu, k+1)   t(\nu,k; \lambda),
	$$
	$$
	L'(\nu, k+1)  t(\nu, -k-1; \lambda) 
	= L'(-\nu, k+1)   t(\nu,-k; \lambda),
	$$
	where
	\begin{align*}
		L(\nu,k+1) &= (\nu+\rho)+2k-2-2b_1 = (\nu+\rho)-2p+2+2k = \nu+q-p+1+2k,\\
		L'(\nu,k+1) &= (\nu+\rho)+2k-2-2b_1' = (\nu+\rho)-2q+2+2k = \nu+p-q+1+2k.
	\end{align*}
	Since $L'(\nu,k+1)=-L(-\nu,-k)$, this proves the statement for $\mathfrak{g}\simeq \mathfrak{su}(p,q)$.
\end{proof}

\subsection{Evaluation at $k=0$}\label{sec:Evaluationk=0}

We fix minimal orthogonal tripotents $e_1,e_2\in V_1$ with $\norm{e_1}=\norm{e_2}=1$.

By Theorem~\ref{thm:M_decomposition}, the constant function $\mathbf{1}$ is contained in $\calF_{\lambda,0}(V_1)$. We can therefore compute $\calE(\alpha,0)$ by applying $\sigma_\lambda(u_\alpha)$ to $\mathbf{1}$. By homogeneity in $\lambda$, we may even set $\lambda=1$:
$$ \sigma_{1}(u_\alpha)\mathbf{1} = \mathcal E(\alpha,0)\mathbf{1}. $$
Then
\begin{equation}\label{eq:eigenvalue_M_invariant}
	\mathcal E(\alpha,0)=\int_{\bar{\mathfrak{n}}}u_\alpha(z,t)e^{it} e^{-\abs{z}^2}\, dz\, dt.
\end{equation}

We consider the compact Hermitian symmetric space $K/L_0=\mathbb{P}(K/L)$ given as the projectivization of $K/L$. Explicitly $$L_0=\{ k\in K;  k\cdot e\in \CC e\}.$$ We denote the Lie algebra of $L_0$ by $\fl_0$. Let $\fk^\CC=\fq^+\oplus \fl_0^\CC \oplus \fq^-$ be the Harish-Chandra decomposition. Then $\fq^+=\{  D(v,\bar{e}); v\in V_1\}$ is naturally isomorphic to the Jordan triple system $V_1$ (see \cite{Zha22} for more details).
Hence we can use polar coordinates (see \cite[Section~1]{FK90}) on $\fq^+\cong V_1$ given by
$z=h(t_1e_1+t_2e_2)$, $h\in L_0$. Here $t_1\geq t_2\geq 0$ if $\mathfrak{g}\not\simeq \mathfrak{su}(p,q)$ and $t_1,t_2\geq 0$ otherwise. Then
\begin{equation}\label{eq:polar_coordinates}
	\int_{V_1}dv=\const \times \int_{t_1,t_2} (t_1^2-t_2^2)^{a_1}t_1^{2{b_1}+1}t_2^{2b_1'+1}\int_{L_0}dh\,dt_1\,dt_2,
\end{equation}
with positive constant determined by the normalization of the measures.

A key observation to solve the integral \eqref{eq:eigenvalue_M_invariant} is the following:

\begin{lemm+}\label{lemma:polar_coordinates_kernel}
	For $h\in L_0$ and $t_1,t_2\geq0$, $t\in\RR$:
	$$u_\alpha(h(t_1e_1+t_2e_2),t)=((t_1^2-t_2^2)^2+t^2)^\alpha.$$
\end{lemm+}
\begin{proof}
	Let $w=t_1e_1+t_2e_2$.
	Since $\Omega$ is $M$-invariant, we have $\Omega(hw)=\Omega(w)$ for all $h \in L_0\subseteq M$. Consider the formula for $\Omega$ given in Proposition~\ref{prep-1-5}.
	Then $\abs{w}^2=t_1^2+t_2^2$ by orthogonality.
	It remains to consider $\langle D(w,\bar{w})w,w \rangle$.
	By \cite[Lemma~3.15]{Loo77} we can expand
	$$
	\langle D(w,\bar{w})w,w \rangle=2t_1^3 \langle e_1,w \rangle +2t_2^3 \langle e_2, w \rangle=2t_1^4+2t_2^4,
	$$
	such that
	$$\Omega(w)=(t_1^2+t_2^2)^2-2t_1^4-2t_2^4-=-(t_1^2-t_2^2)^2,$$
	which proves the statement.
\end{proof}

It follows that
$$ \calE(\alpha,0) = \const \times \int_\RR e^{it}\int_{t_1\geq t_2\geq0}((t_1^2-t_2^2)^2+t^2)^\alpha e^{-(t_1^2+t_2^2)} (t_1^2-t_2^2)^{a_1} t_1^{2{b_1}+1}t_2^{2{b_1'}+1}\, dt_1\,dt_2\,dt.$$

\begin{prop+}\label{prop:integral_formula}
	For $-1-\frac{a_1}{2}<\alpha <-\frac{1}{2}$ the following integral exists as an oscillatory integral:
	$$ \int_{\mathbb R} e^{it}\int_{t_1>t_2\geq 0}((t_1^2-t_2^2)^2+t^2)^\alpha e^{-(t_1^2+t_2^2)} (t_1^2-t_2^2)^{a_1} t_1^{2{b_1}+1}t_2^{2{b_1'}+1}\, dt_1\,dt_2\,dt. $$
	It can be evaluated in the following two cases:
	\begin{enumerate}
		\item For $b_1=b_1'$ the integral equals
		$$ 2^{a_1-2}\Gamma(\tfrac{{a_1}+2{b_1}+2}{2})\Gamma(\tfrac{a_1+1}{2})\Gamma({b_1}+1)\times\frac{2^{2\alpha}\Gamma(\alpha+\frac{{a_1}+2}{2})\Gamma(\alpha+\frac{{a_1}+2{b_1}+3}{2})}{\Gamma(-\alpha)\Gamma(\alpha+a_1+b_1+2)}. $$
		\item For $a_1=0$ the integral equals
		$$ 2^{-3-{b_1}-b_1'}\pi \Gamma({b_1}+1)\Gamma(b_1'+1)\times\frac{\Gamma(\alpha+1)\Gamma(2\alpha+{b_1}+b_1'+3)}{\Gamma(-\alpha)\Gamma(\alpha+b_1+2)\Gamma(\alpha+b_1'+2)}. $$
	\end{enumerate}
\end{prop+}

\begin{proof}
	First we obtain after change of coordinates $t_1^2=r$, $t_2^2=rs$
	$$ \frac{1}{4}\int_{\mathbb R} e^{it}\int_{\mathbb R_+}\int_0^1(r^2(1-s)^2+t^2)^\alpha e^{-r(1+s)} (1-s)^{a_1} r^{{a_1}+{b_1}+b_1'+1}s^{{b_1}}\, ds \,dr\,dt.$$
	Changing coordinates further to $t\mapsto tr(1-s)$ we obtain
	$$
		\frac{1}{4}
		\int_{\mathbb R} (1+t^2)^{\alpha} \int_{\mathbb R_+}e^{-r(1-it)} r^{2\alpha+{a_1}+{b_1}+b_1'+2} \int_0^1   e^{-rs(1+it)} (1-s)^{2\alpha+{a_1}+1} s^{{b_1}} \, ds \,dr\,dt.
	$$
	We can calculate the integral over $s$ using \cite[3.383 (1)]{GR07} and obtain
	\begin{multline*}
			\frac{B(2\alpha+{a_1}+2,{b_1}+1)}{4}\\ \times
		\int_{\mathbb R} (1+t^2)^{\alpha} \int_{\mathbb R_+}e^{-r(1-it)} r^{2\alpha+{a_1}+{b_1}+b_1'+2} \pFq{1}{1}{{b_1}+1} {2\alpha+{a_1}+{b_1}+3}{-r(1+it)}\,dr\,dt.
	\end{multline*}
	Using \cite[7.522 (9)]{GR07} we can evaluate the integral over $r$:
	\begin{multline*}
		\frac{B(2\alpha+{a_1}+2,{b_1}+1)\Gamma(2\alpha+{a_1}+{b_1}+b_1'+3)}{4}\\ \times
		\int_{\mathbb R} (1+t^2)^{\alpha} (1-it)^{-(2\alpha+{a_1}+{b_1}+b_1'+3)}\pFq{2}{1}{{b_1}+1, 2\alpha+{a_1}+{b_1}+b_1'+3}{2\alpha+{a_1}+{b_1}+3}{- \frac{1+it}{1-it}}\,dt.
	\end{multline*}
	We expand the hypergeometric function and obtain using the Cauchy Beta Integral
	\begin{multline*}
		\int_{\mathbb R} (1+t^2)^{\alpha} (1-it)^{-(2\alpha+{a_1}+{b_1}+b_1'+3)}\pFq{2}{1}{{b_1}+1, 2\alpha+{a_1}+{b_1}+b_1'+3}{2\alpha+{a_1}+{b_1}+3}{- \frac{1+it}{1-it}}\,dt\\=
		\sum_{m=0}^\infty(-1)^m \frac{({b_1}+1)_m(2\alpha+{a_1}+{b_1}+b_1'+3)_m}{m!(2\alpha+{a_1}+{b_1}+3)_m}\int_{\mathbb R}(1+it)^{\alpha+m}(1-it)^{-\alpha-{a_1}-{b_1}-b_1'-m-3}\, dt\\
		=\sum_{m=0}^\infty (-1)^m \frac{({b_1}+1)_m(2\alpha+{a_1}+{b_1}+b_1'+3)_m}{m!(2\alpha+{a_1}+{b_1}+3)_m} \frac{2^{-1-{a_1}-{b_1}-b_1'}\pi \Gamma({a_1}+{b_1}+b_1'+2)}{\Gamma(-\alpha-m)\Gamma(\alpha+{a_1}+{b_1}+b_1'+m+3)}\\
		=\frac{2^{-1-{a_1}-{b_1}-b_1'}\pi\Gamma({a_1}+{b_1}+b_1'+2)}{\Gamma(-\alpha)\Gamma(\alpha+{a_1}+{b_1}+b_1'+3)}
		\pFq{3}{2}{{b_1}+1, 2\alpha+{a_1}+{b_1}+b_1'+3, \alpha+1}{2\alpha+{a_1}+{b_1}+3, \alpha+{a_1}+{b_1}+b_1'+3}{1}.
	\end{multline*}
	For $b_1=b_1'$, the hypergeometric function can be evaluated using Proposition~\ref{prop:SpecialValue3F2a}, and for $a_1=0$ we can use Proposition~\ref{prop:SpecialValue3F2b}. The claimed formulas now follow after applying the duplication formula for the Gamma function.
\end{proof}

\begin{proof}[Proof of Theorem~\ref{thm:eigenvalue_formula}]
	Proposition~\ref{prop:integral_formula} implies the formula for $k=0$. Then applying the recursion of Theorem~\ref{thm:recursion} yields the formula.
\end{proof}

\subsection{The symplectic case}\label{sec:symplectic_case}
We will shortly handle the case $\mathfrak{g}=\mathfrak{sp}(n,\RR)$. 
Here $M=\Sp(n-1,\RR)$ acts by the projective representation $\omega_{\met,\lambda}$ irreducibly on even and odd functions in the Fock space such that
$$\mathcal{F}_\lambda(V_1)|_{\mathfrak{sp}(n-1)}=\mathcal{F}_{\lambda,0}(V_1)\oplus \mathcal{F}_{\lambda,1}(V_1).$$
Let $P_k$, $k\in \{0,1\}$ be the corresponding orthogonal projections and define
\begin{equation}
	\label{Eigenvalue_SP}
	\mathcal{E}(\alpha,k)= (-1)^k 2\pi^{n-1} \Gamma(2\alpha+1)\cos\left(\pi \frac{2\alpha+1}{2}\right).
\end{equation}
In this section we prove the following theorem:
\begin{theo+}\label{thm:SP(n)_eigenvalues}
	We have for $\mathfrak{g}\simeq \mathfrak{sp}(n,\RR)$
	$$\sigma_\lambda(u_\alpha)=\abs{\lambda}^{-2\alpha-n}\big[\mathcal{E}(\alpha,0)P_0 + \mathcal{E}(\alpha,1)P_1\big].$$
\end{theo+}

\begin{lemma}
	We have $\Omega(v)=0$ for all $v \in V_1$.
\end{lemma}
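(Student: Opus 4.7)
The plan is to apply Proposition~\ref{prep-1-5}, which gives
$$
\Omega(v) = |v|^4 - \langle D(v,\bar v)v, v\rangle,
$$
so it suffices to show $D(v,\bar v)v = |v|^2 v$ for every $v\in V_1$ in the symplectic case; this forces $\Omega(v) = |v|^4 - |v|^4 = 0$.

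When $\mathfrak{g}=\mathfrak{sp}(n,\RR)$, the ambient Hermitian Jordan triple system is $V=\mathrm{Sym}(n,\CC)$ with triple product $\{x,\bar y,z\}=\tfrac{1}{2}(x\bar y z+z\bar y x)$, and one takes $e=E_{11}$ as a minimal tripotent. The Peirce-$1$ space is then
$$
V_1 = \bigl\{\hat v := \textstyle\sum_{j=2}^{n} v_{j-1}(E_{1j}+E_{j1}) : v\in\CC^{n-1}\bigr\},
$$
a Jordan subtriple of $V$. A direct matrix multiplication shows that $\hat v\,\bar{\hat v}$ is block-diagonal of the form $|v|^2 E_{11} \oplus v\bar v^{T}$ with $|v|^2 = \sum_j |v_j|^2$, and multiplying by $\hat v$ once more annihilates the lower block, yielding $\hat v\,\bar{\hat v}\,\hat v = |v|^2 \hat v$. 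A short trace calculation of $\tr_V D(\hat v,\bar{\hat v})$ confirms that $|v|^2$ coincides with $\|\hat v\|^2$ in the paper's normalization (where $\|e\|=1$). Hence $D(\hat v,\bar{\hat v})\hat v = \hat v\,\bar{\hat v}\,\hat v = \|\hat v\|^2 \hat v$, and substituting into Proposition~\ref{prep-1-5} gives $\Omega(\hat v) = 0$.

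Conceptually, this reflects the fact that $V_1\cong\CC^{n-1}$ is a rank-one Jordan subtriple of $V$ precisely when $\mathfrak{g}=\mathfrak{sp}(n,\RR)$, in contrast to rank two in all other Hermitian cases; this is exactly why the symplectic case has to be split off from the rank-two analysis in the rest of the paper. In any rank-one Hermitian Jordan triple system, equivariance under the automorphism group forces $D(v,\bar v)v$ to be a scalar multiple of $v$, and evaluation on a unit tripotent pins the scalar down to $|v|^2$; the matrix computation above merely realizes this concretely. I anticipate no real obstacle, only the minor bookkeeping point of checking the inner-product normalization on $V_1$.
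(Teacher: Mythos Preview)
Your proof is correct and follows essentially the same approach as the paper: realize $V$ as $\mathrm{Sym}(n,\CC)$, compute the triple product explicitly on $V_1$, and feed the result into Proposition~\ref{prep-1-5}. The paper's proof is a single sentence (``an easy computation\ldots to find that $\langle D(v,\bar v)v,v\rangle=|v|^4$''), whereas you spell out the matrix multiplication and normalization check in detail and supply the conceptual rank-one explanation, but the underlying argument is the same.
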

\begin{proof}
	This is an easy computation, by for example realizing $V$ as the space of symmetric complex $n \times n$ matrices to find that $\langle D(v,\bar{v})v,v\rangle=\abs{v}^4$ for all $v\in V_1$.
\end{proof}

\begin{proof}[Proof of Theorem~\ref{thm:SP(n)_eigenvalues}]
Assume $\lambda>0$.
Further we assume $-\frac{1}{2}<\Re(\alpha)<0$
and let $N\in \ZZ_{\geq 0}^{n-1}$ be a multiindex.
Then
\begin{align*}
	\mathcal{E}(\alpha,k)|\lambda|^{-2\alpha-n}&=\frac{1}{N!}\frac{\partial^{\abs{N}}}{\partial z^N}\Bigg|_{z=0}\sigma_\lambda(u_\alpha)z^N\\
	&=\sum_{N'\leq N}\binom{N}{N'}\frac{(-2\lambda)^{\abs{N'}}}{N'!}\int_\RR e^{i\lambda t} \abs{t}^{2\alpha}\, dt\int_{\CC^{n-1}}e^{-\lambda\abs{z}^2}\abs{z_1}^{N'_1}\dots \abs{z_{n-1}}^{N'_{n-1}}\,dz,
\end{align*}
where $k\in \{0,1\}$ is determined by the multiindex $N$ being even or odd.
The first integral is just a Fourier transform (in the distribution sense) and we evaluate
\begin{equation*}
	\int_\RR e^{i\lambda t} \abs{t}^{2\alpha}\, dt= 2\abs{\lambda}^{-2\alpha-1}\int_0^{\infty} \cos(t) t^{2\alpha}\, dt= 2\abs{\lambda}^{-2\alpha-1} \Gamma(2\alpha+1)\cos\left(\pi \frac{2\alpha+1}{2}\right).
\end{equation*}
For the second integral we choose $N\in\{(0,\ldots,0),(1,0,\ldots,0)\}$, so that we only need to consider $N'_1=0,1$,
 and use polar coordinates to find
$$
	\int_{\CC^{n-1}}e^{-\lambda\abs{z}^2}\abs{z_1}^{N'_1}\,dz=
	\int_0^\infty r^{2n-3+2N'_1} e^{-\lambda r^2}\, dr \int_{S^{2n-3}} \abs{\omega_1}^{2N'_1} \, d\omega ,
$$
Applying the coordinates $\{z \in \CC; \abs{z}<1 \}\times S^{2n-3} \to S^{2n-1}$, $(z,\eta)\to (z,\sqrt{1-\abs{z}^2} \eta)$
we can evaluate this to
$$
\int_0^\infty r^{2n-3+2N'_1} e^{-\lambda r^2}\, dr \int_{S^{2n-3}} \abs{\omega_1}^{2N'_1} \, d\omega =
\abs{\lambda}^{-n+1-N'_1}\pi^{n-1}.
$$
Hence, in total we obtain the claimed formula.
The statement follows for all $\alpha$ by meromorphic continuation.
\end{proof}

\section{The complementary series and its endpoint}

We use the Plancherel formula for the Heisenberg group Fourier transform combined with the formula for the Fourier transform of the Knapp--Stein kernel to identify the complementary series in $\pi_\nu$, as well as a unitary quotient/subrepresentation at the end of the complementary series which is related to the kernel of a conformally invariant system of differential operators.

\subsection{The Hermitian form under the Fourier transform}

For any $\nu\in\CC$, the bilinear pairing
$$ I(\nu)\times I(-\nu)\to\CC, \quad (f_1,f_2)\mapsto\int_{\bar{N}}f_1(\bar{n})f_2(\bar{n})\,d\bar{n} $$
is invariant for $\pi_\nu\otimes\pi_{-\nu}$. Twisting with the intertwining operator $A_\nu:I(\nu)\to I(-\nu)$, it follows that for $\nu\in\RR$ the sesquilinear form
$$ I(\nu)\times I(\nu)\to\CC, \quad (f_1,f_2)\mapsto\langle
f_1,f_2\rangle_\nu=\int_{\bar{N}}(A_\nu f_1)
(\bar{n})\overline{f_2(\bar{n})}
\,d\bar{n} $$
is $G$-invariant. Combining the Plancherel formula \eqref{eq:plancherel_heisenberg} with Theorems~\ref{thm:eigenvalue_formula} and \ref{thm:SP(n)_eigenvalues} shows that for $\Re\nu>-\rho$ this sesqui-linear form can be rewritten as
\begin{equation}\label{eq:plancherel_KS}
	\langle f_1,f_2\rangle_\nu = c\sum_k\calE(\tfrac{\nu-\rho}{2},k)\int_{\RR^\times}\tr\big(\sigma_\lambda( f_1)\circ P_k\circ \sigma_\lambda(f_2)^*\big)|\lambda|^{-\nu+\rho-1}\, d\lambda. 
\end{equation}

\subsection{The complementary series}

By \eqref{eq:plancherel_KS} we can use the results of Section~\ref{sec:Eigenvalue} to find the complementary series of the degenerate principal series representations.
\begin{theo+}\label{thm:ComplSer}
	For $\nu\in\RR$, the representation $(\pi_\nu,I(\nu))$ is irreducible and unitary (also called a complementary series representation) if and only if
	$$
	\nu\in \begin{cases}
		(-1,1) & \text{if $\mathfrak{g}\simeq\mathfrak{su}(p,q)$ and $p-q$ is even,}\\
		(-a_1-1,a_1+1) & \text{if $\mathfrak{g}\not\simeq\mathfrak{su}(p,q),\mathfrak{sp}(n,\RR)$.}
	\end{cases}
	$$
\end{theo+}
\begin{proof}
	By \eqref{eq:plancherel_KS} it is enough to check if the scalars $\calE(\frac{\nu-\rho}{2},k)$ are real valued and of the same sign.
	First, for $\mathfrak{g}=\mathfrak{sp}(n,\RR)$ by Theorem~\ref{thm:SP(n)_eigenvalues}, we have 
	$$\frac{\mathcal{E}(\alpha,0)}{\mathcal{E}(\alpha,1)}=-1$$
	for all $\alpha \in \CC$, such that there is no complementary series.
	
	Now let $\mathfrak{g}\simeq \mathfrak{su}(p,q)$. Let wlog $p-q=r\geq 0$.
	By Theorem~\ref{thm:eigenvalue_formula} we have that
	\begin{equation}\label{eq:u(p,q)_eigenvalue_equality}
		\calE(\tfrac{\nu-\rho}{2},k)=(-1)^r\calE(\tfrac{\nu-\rho}{2},-k-r).
	\end{equation}
	
	Hence for $r$ odd we have no complementary series.
	Now assume $r=2m$ is even and $k\geq 0$.
	Then $$\calE(\tfrac{\nu-\rho}{2},k-m)=\const \times \frac{(-1)^{k-m}}{\Gamma(\frac{\nu+1}{2}+k)\Gamma(\frac{\nu+1}{2}-k)}
	=\const \times \frac{(\frac{-\nu+1}{2})_{k}}{\Gamma(\frac{\nu+1}{2}+k)},
	$$
	which is definite if and only if $\nu\in (-1,1)$. By \eqref{eq:u(p,q)_eigenvalue_equality} the same follows for $k\leq 0$.
	
	Finally in the remaining cases we have by Theorem~\ref{thm:eigenvalue_formula},
	$$\calE(\tfrac{\nu-\rho}{2},k)=\const \times \frac{(\frac{-\nu+a_1+1}{2})_k}{\Gamma(\frac{\nu+a_1+1}{2}+k)},$$
	which is definite if and only if $\nu\in(-a_1-1,a_1+1)$.
\end{proof}

\begin{rema+}
	In the cases where there is no complementary series, i.e. $\mathfrak{g}\simeq \mathfrak{su}(p,q)$ with $p-q$ odd or $\mathfrak{g}\simeq \mathfrak{sp}(n,\RR)$, the representation $I(\nu)$ for $\nu=0$ decomposes into the direct sum of two unitarizable irreducible representations (see \cite[Section~4]{HT93} for $\mathfrak{su}(p,q)$ and \cite[Theorem~2.29]{Farmer81} for $\sp(n,\RR)$). On the Fourier transformed side, these subrepresentations correspond to the decomposition of the Fock spaces $\calF_\lambda(V_1)$ into even and odd functions.
\end{rema+}

\subsection{The end of the complementary series}

In this section we study subrepresentations realized as the kernel of the intertwining operator $A_\nu$ at the end of the complementary series. Let therefore 
$$\nu_0=\begin{cases}
	1 & \text{if $\mathfrak{g}\simeq\mathfrak{su}(p,q)$ and $p-q$ even,} \\
	a_1+1 & \text{if $\mathfrak{g}\not\simeq \mathfrak{su}(p,q),\mathfrak{sp}(n,\RR)$.}
\end{cases}$$
and assume throughout this section that $\frakg\not\simeq\sp(n,\RR)$ and that $p-q$ is even in case $\frakg\simeq\su(p,q)$. Then it is easily read off from Theorem~\ref{thm:eigenvalue_formula} that 
$\calE(\frac{-\nu_0-\rho}{2},k)=0$ if and only if $k \neq k_0$, where
$$
k_0=\begin{cases}
	\frac{q-p}{2} & \text{if $\mathfrak{g}\simeq \mathfrak{su}(p,q)$ and $p-q$ even,}\\
	0 &  \text{if $\mathfrak{g}\not\simeq \mathfrak{su}(p,q),\mathfrak{sp}(n,\RR)$.}
\end{cases}
$$
Hence, by Theorems~\ref{thm:eigenvalue_formula} and
\ref{thm:SP(n)_eigenvalues}, the kernel
$\Ker A_{-\nu_0}$ consists of those functions $f\in I(\nu)$ such that
$$ \sigma_\lambda (f)|_{\calF_{\lambda,k}(V_1)} = 0,$$
for all $k\neq k_0$.
We view this as the image of $A_{\nu_0}$ since $\calE(\frac{\nu_0-\rho}{2},k)=0$ if and only if $k= k_0$. Then obviously, the invariant sesqui-linear form $\langle\cdot,\cdot\rangle_\nu$ vanishes on $\Ker A_{-\nu_0}$. But since $\Ker A_{-\nu_0}= \im A_{\nu_0}$ we can as well consider the parameter $\nu_0$:
\begin{equation}\label{eq:norm_on_subrep}
	\langle f_1,f_2\rangle_{\nu_0} =  \const\times\int_{\RR^\times}\tr\big(\sigma_\lambda (f_1)\circ P_{k_0}\circ \sigma_\lambda( f_2)^*\big)|\lambda|^{-\nu_0+\rho-1}\, d\lambda. 
\end{equation}
This should really be viewed as an invariant form on $\im A_{\nu_0}$. 

For $f\in I(\nu_0)$ let $A_f(\lambda,k)=\sigma_\lambda (f)|_{\calF_{\lambda,k}(V_1)}:\calF_{\lambda,k}(V_1)\to\calF_\lambda(V_1)$, then
$$ \langle f_1,f_2\rangle_{\nu_0} = \const\times\int_{\RR^\times}\tr\big(A_{f_1}(\lambda,k_0)\circ A_{f_2}(\lambda,k_0)^*\big)|\lambda|^{\rho-\nu_0-1}\, d\lambda, $$
which is obviously positive semidefinite (if one normalizes the constant to be positive). We have therefore proved:

\begin{theorem}\label{thm:UnitarizableSubs}
	The subrepresentation $\im A_{\nu_0}=\Ker A_{-\nu_0}\subseteq I(-\nu_0)$ is proper and unitarizable.
\end{theorem}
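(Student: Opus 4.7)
The plan is to deduce all three assertions---the identity $\im A_{\nu_0} = \Ker A_{-\nu_0}$, properness, and positive definiteness of the $G$-invariant Hermitian form~\eqref{eq:norm_on_subrep}---directly from the explicit Fourier-side description of the Knapp--Stein operators provided by Theorems~\ref{thm:eigenvalue_formula} and~\ref{thm:SP(n)_eigenvalues}, combined with the Plancherel identity~\eqref{eq:plancherel_heisenberg}. The structural fact underlying everything is that at the parameter $\nu_0$, the Weyl transforms $\sigma_\lambda(u_{(\nu_0-\rho)/2})$ and $\sigma_\lambda(u_{(-\nu_0-\rho)/2})$ are nonzero scalar multiples of the complementary projections $P_{k_0}$ and $\sum_{k\neq k_0}P_k$ respectively.

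First I would verify $\im A_{\nu_0} = \Ker A_{-\nu_0}$ by showing both subspaces of $I(-\nu_0)$ admit the same Fourier characterization: a function $f$ lies in either iff $\sigma_\lambda(f)\circ P_k = 0$ for every $k\neq k_0$. For $\Ker A_{-\nu_0}$ this follows from~\eqref{eq:KSConvolution} and~\eqref{eq:WeylTransformConvolution} because the relevant Weyl transform is a nonzero scalar multiple of $\sum_{k\neq k_0}P_k$. For $\im A_{\nu_0}$ we observe that $f = A_{\nu_0}\tilde f$ forces $\sigma_\lambda(f) = \sigma_\lambda(\tilde f)\circ c(\lambda)P_{k_0}$ with $c(\lambda)\neq 0$, and conversely any Fourier profile supported on $\calF_{\lambda,k_0}(V_1)$ is realized by Fourier inversion on the $\nu_0$ side. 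Injectivity of the Fourier transform on $I(-\nu_0)$ then yields the equality.

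Properness is then immediate: $\im A_{\nu_0}$ is nonzero because $\calE(\tfrac{\nu_0-\rho}{2},k_0)\neq 0$ makes $A_{\nu_0}$ a nonzero operator; and $\Ker A_{-\nu_0}$ is a proper subspace of $I(-\nu_0)$ because $\calE(\tfrac{-\nu_0-\rho}{2},k)\neq 0$ for some $k\neq k_0$ makes $A_{-\nu_0}$ a nonzero operator, so there exist $f\in I(-\nu_0)$ not annihilated by $A_{-\nu_0}$.

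For unitarizability, I would restrict the sesqui-linear form~\eqref{eq:norm_on_subrep} to $\im A_{\nu_0}$. Its $G$-invariance is inherited from that of the natural pairing $I(\nu)\times I(-\bar\nu)\to\CC$ together with the intertwining property of $A_\nu$, and the rewriting as a trace integral uses the Plancherel identity. For $f\in \im A_{\nu_0}$, the integrand in~\eqref{eq:norm_on_subrep} with $f_1 = f_2 = f$ collapses to $\|A_f(\lambda,k_0)\|_{\HS(\calF_\lambda)}^2\geq 0$, giving positive semidefiniteness after choosing the correct sign for the overall constant. Nondegeneracy follows because $\langle f, f\rangle_{\nu_0} = 0$ forces $A_f(\lambda,k_0) = 0$ for almost every $\lambda$, and combined with the defining relation $A_f(\lambda,k) = 0$ for $k\neq k_0$ on $\im A_{\nu_0}$ gives $\sigma_\lambda(f)\equiv 0$, whence $f = 0$ by Fourier injectivity. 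I expect the main technical obstacle to be fixing the sign of the overall constant in~\eqref{eq:norm_on_subrep}, which requires tracing the sign of $\calE(\tfrac{\nu_0-\rho}{2},k_0)$ through the meromorphic continuation and the Plancherel normalization.
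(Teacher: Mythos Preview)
Your approach is essentially the same as the paper's: identify $\Ker A_{-\nu_0}$ and $\im A_{\nu_0}$ on the Fourier side via the vanishing pattern of $\calE(\tfrac{\pm\nu_0-\rho}{2},k)$, observe they coincide, and read off positive (semi)definiteness of~\eqref{eq:norm_on_subrep} from the single surviving term $k=k_0$. One minor slip: $\sigma_\lambda(u_{(-\nu_0-\rho)/2})$ is not literally a scalar multiple of $\sum_{k\neq k_0}P_k$ but a diagonal operator with \emph{distinct} nonzero eigenvalues on each $\calF_{\lambda,k}(V_1)$, $k\neq k_0$; this does not affect your argument, and your explicit nondegeneracy and properness steps go slightly beyond what the paper spells out.
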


\begin{remark}
	One could do the same for $\nu=-\nu_0-2n$, but here the coefficient in $\calE(\frac{-\nu-\rho}{2},k)$ would be
	$$ \const \times \frac{(-n)_k}{\Gamma(\nu_0+n+k)},$$
	whose sign is $(-1)^k$, so the corresponding subrepresentation is not unitarizable.
\end{remark}

We describe the $K$-types of the unitarizable representation $\Ker A_{-\nu_0}$. Following \cite[Lemma~2.5]{Zha22} we have for $\mathfrak{g}\not\simeq \mathfrak{su}(p,q), \mathfrak{sp}(n,\RR)$
$$I(\nu)|_K \simeq \bigoplus_{l\in \ZZ} \bigoplus_{\substack{\mu_1\geq \mu_2\geq \abs{l},\\ \mu_1=\mu_2=l \mod 2}}W_{\mu_1,\mu_2,l},$$
where $W_{\mu_1,\mu_2,l}$ is the $K$-module of highest weight $l\alpha_0+\mu_1\alpha_1+\mu_2\alpha_2$, where $\{\alpha_0,\alpha_1,\alpha_2\}$ is the dual basis to $\{iZ, i(E_1^+-E_1^-),i(E_2^+-E_2^-)\}$ as given in Section~\ref{sec:MetaplecticRestrictionM}. For more details see \cite{Zha22}.
For $\mathfrak{g}\simeq \mathfrak{su}(p,q)$ we have even more explicitly following \cite{HT93}
$$I(\nu)|_K \simeq \bigoplus_{\mu_1, \mu_2\geq 0}\bigoplus_{\substack{\mu_1=m_1+n_1,\\ \mu_2=m_2+n_2, \\
		m_1-m_2=n_1-n_2}} \mathcal{H}^{m_1,n_1}(\CC^p)\otimes \mathcal{H}^{m_2,n_1}(\CC^q),$$
where $\mathcal{H}^{a,b}$ are the harmonic homogeneous polynomials of holomorphic degree $a$ and anti-holomorphic degree $b$.
\begin{coro+}
	For $\mathfrak{g}\not\simeq \mathfrak{su}(p,q), \mathfrak{sp}(n,\RR)$ we have 
	$$\Ker A_{-\nu_0}|_K \simeq \bigoplus_{l\in \ZZ} \bigoplus_{\substack{\mu \geq \abs{l},\\ \mu=l \mod 2}}W_{\mu,\mu,l}$$
	and for $\mathfrak{g}\simeq \mathfrak{su}(p,q)$ we have
	$$\Ker A_{-\nu_0}|_K \simeq \bigoplus_{\substack{\mu_1, \mu_2\geq 0,\\ \mu_1-\mu_2=q-p}}\bigoplus_{\substack{\mu_1=m_1+n_1,\\ \mu_2=m_2+n_2, \\
			m_1-m_2=n_1-n_2}} \mathcal{H}^{m_1,n_1}(\CC^p)\otimes \mathcal{H}^{m_2,n_2}(\CC^q).$$
\end{coro+}
\begin{proof}
	This result can be read off easily from \cite[Theorem~3.1]{Zha22} or \cite[Section~4]{HT93} in the latter case.
\end{proof}

\begin{rema+}
	The results above can also be proven using the compact picture as studied in \cite{Zha22}, and in the case of $\mathfrak{su}(p,q)$ this was already done in \cite{HT93}. Yet, our approach of studying the non-compact picture gives a rather quick and systematic proof in all cases.
\end{rema+}

\subsection{Relation to conformally invariant systems}

Barchini, Kable and Zierau constructed in \cite{BKZ08} a system $\Omega_\mu(T)$ of second order differential operators on $\bar{N}$, parametrized by $T\in\frakm$, such that the joint kernel of $\Omega_\mu(\frakm')$ is a subrepresentation of $I(\nu)$ for each simple or abelian factor $\frakm'\subseteq\frakm$ and a certain parameter $\nu=\nu(\frakm')$. In some cases, we can relate this kernel to the subrepresentation constructed above. The key observation for this is that by \cite[Theorem 4.4.1]{Fra22} we have
\begin{equation}
	\sigma_\lambda(\Omega_\mu(T)f) = 2i\lambda\cdot\sigma_\lambda (f)\circ d\omega_{\met,\lambda}(T) \qquad (T\in\frakm),\label{eq:ConfInvSystemFourierTransform}
\end{equation}
where $d\omega_{\met,\lambda}$ denotes the metaplectic representation of $\sp(V_1,\omega)$ on $\calF_\lambda(V_1)$ restricted to $\frakm$.

We first show that in the case where $\frakm'$ is non-compact, the kernel of $\Omega_\mu(\frakm')$ is trivial. For this, we use the notation
$$ I(\nu)^{\Omega_\mu(\frakm')} = \{f\in I(\nu);\Omega_\mu(T)f=0\mbox{ for all }T\in\frakm'\}. $$

\begin{lemma}\label{lem:NonCptConfInvSystemTrivial}
	If $\frakm'\subseteq\frakm$ is a non-compact simple factor, then $I(\nu)^{\Omega_\mu(\frakm')}=\{0\}$.
\end{lemma}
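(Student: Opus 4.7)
The plan is to pass to the Fourier-transformed side and exploit the multiplication operators arising from $\mathfrak{p}_\mathfrak{m}^-$ in the metaplectic representation. By identity~\eqref{eq:ConfInvSystemFourierTransform} and the injectivity of the Heisenberg Fourier transform on $I(\nu)$ for $\Re\nu>-\rho$ recalled in Section~\ref{sec:fourier_transform}, the condition $f\in I(\nu)^{\Omega_\mu(\mathfrak{m}')}$ is equivalent to
\begin{equation*}
	\sigma_\lambda(f)\circ d\omega_{\met,\lambda}(T)=0 \qquad (T\in\mathfrak{m}',\,\lambda\in\RR^\times).
\end{equation*}
Since the metaplectic representation is unitary on the real form $\mathfrak{m}'$, each $d\omega_{\met,\lambda}(T)$ is skew-symmetric on $\calF_\lambda(V_1)$, and so the closed span of $d\omega_{\met,\lambda}(\mathfrak{m}')\calP(V_1)$ has orthogonal complement equal to the space of $\mathfrak{m}'$-invariants $\calF_\lambda(V_1)^{\mathfrak{m}'}$. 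It will therefore suffice to prove $\calF_\lambda(V_1)^{\mathfrak{m}'}=0$: this forces $\sigma_\lambda(f)=0$ for every $\lambda$, and hence $f=0$.

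For the vanishing of $\mathfrak{m}'$-invariants, the key is Lemma~\ref{lem:MetaplecticLieAlgAction}, which asserts that elements of $\mathfrak{p}_\mathfrak{m}^-$ act by multiplication operators. Being a non-compact simple ideal of the Hermitian Lie algebra $\mathfrak{m}$, $\mathfrak{m}'$ is itself Hermitian; the Cartan involution and complex structure of $\mathfrak{m}$ restrict to it, yielding $\mathfrak{p}_{\mathfrak{m}'}^-:=\mathfrak{m}'^\CC\cap\mathfrak{p}_\mathfrak{m}^-\neq 0$. For any $0\neq Q(x)\bar{w}\in\mathfrak{p}_{\mathfrak{m}'}^-$ with $w\in V_0$, the operator $d\omega_{\met,\lambda}(Q(x)\bar{w})$ acts on $\calF_\lambda(V_1)$ as multiplication by the quadratic polynomial $p_w(z)=2\lambda\langle Q(z)\bar{e},w\rangle$. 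Any $\mathfrak{m}'$-invariant $\zeta\in\calF_\lambda(V_1)$ must therefore satisfy $p_w\cdot\zeta=0$, and since multiplication by a non-zero holomorphic polynomial is injective on the space of entire functions, this forces $\zeta=0$ as soon as $p_w\not\equiv 0$.

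The main obstacle will thus be to check that $p_w\not\equiv 0$ for some $w$ arising from $\mathfrak{p}_{\mathfrak{m}'}^-$. This amounts to verifying that the subspace $\{w\in V_0:Q(x)\bar{w}\in\mathfrak{m}'^\CC\}$ of $V_0$ is not contained in the annihilator of $\linspan\{D(z,\bar{e})z:z\in V_1\}\subseteq V_0$. The latter span is $\mathfrak{l}$-invariant by the Peirce rules, and one expects it to coincide with $V_0$ whenever $V$ has rank at least two; granting this, $p_w\neq 0$ for \emph{every} non-zero $w\in V_0$ and the claim follows at once. A uniform proof can alternatively be extracted from the observation that $\mathfrak{p}_{\mathfrak{m}'}^-$ is spanned by root vectors $E_{-\alpha}$ for non-compact roots of $\mathfrak{m}'$, each of which corresponds via Lemma~\ref{lem:MetaplecticLieAlgAction} to an explicit non-zero multiplication polynomial; if necessary, a brief case-by-case verification using the description of the Heisenberg parabolics in Section~\ref{sec:HermGrpHeisenbergParabolic} completes the argument.
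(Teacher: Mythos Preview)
Your approach is genuinely different from the paper's. The paper argues abstractly: since $d\omega_{\met,\lambda}|_{\frakm}$ decomposes discretely, the subspace $W=d\omega_{\met,\lambda}(\frakm')\calF_\lambda(V_1)^\infty$ is the closure of the sum of the $d\omega_{\met,\lambda}(\frakm')U$ over irreducible summands $U$, and each such piece is either all of $U$ or zero; the latter would make $U$ a trivial $\frakm'$-representation, which the Howe--Moore property forbids for a non-compact subgroup of the metaplectic group. Your route is more concrete and exploits the Hermitian structure of $\frakm'$ directly: elements of $\frakp_{\frakm'}^-$ act as multiplication by holomorphic polynomials, and multiplication by a non-zero entire function has trivial kernel on the Fock space. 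This is a nice, elementary substitute for Howe--Moore in the present setting.

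That said, your write-up has a real gap. You correctly isolate the crux as $p_w\not\equiv0$ for some $Q(x)\bar w\in\frakp_{\frakm'}^-$, but then you do not prove it: ``one expects it to coincide with $V_0$'', ``granting this'', and ``if necessary, a brief case-by-case verification'' are not arguments. The gap is easily closed, however, without any case analysis. Observe that the adjoint action $\frakm'\to\sp(V_1,\omega)$ is injective: if $[\frakm',\bar\frakn_1]=0$ then by the Cartan involution also $[\frakm',\frakn_1]=0$, and since $\frakm'\subseteq[\frakm,\frakm]$ commutes with $\fraka$ and with the one-dimensional spaces $\frakn_{\pm2}$, $\frakm'$ would be a proper ideal of the simple Lie algebra $\frakg$. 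As the metaplectic representation of $\sp(V_1,\omega)$ is faithful, $d\omega_{\met,\lambda}$ is injective on $\frakm'^\CC$; in particular the multiplication operator $d\omega_{\met,\lambda}(Q(x)\bar w)=2\lambda\langle Q(z)\bar e,w\rangle$ is non-zero for every non-zero $Q(x)\bar w\in\frakp_{\frakm'}^-$, whence $p_w\not\equiv0$. With this in place, your argument goes through.
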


\begin{proof}
	Let $f\in I(\nu)$, then \eqref{eq:ConfInvSystemFourierTransform} implies that $\Omega_\mu(T)f=0$ for all $T\in\frakm'$ if and only if $\sigma_\lambda (f)|_W=0$, where $W=d\omega_{\met,\lambda}(\frakm')\calF_\lambda(V_1)^\infty$. Since $W$ is a closed $\frakm'$-invariant subspace of $\calF_\lambda(V_1)^\infty$ and $d\omega_{\met,\lambda}|_{\frakm'}$ decomposes discretely into irreducible subrepresentations $U$, it follows that $W$ is the closure of the direct sum of the subrepresentations $d\omega_{\met,\lambda}(\frakm')U$. But $d\omega_{\met,\lambda}(\frakm')U$ is a closed $\frakm'$-invariant subspace of $U$, so either $d\omega_{\met,\lambda}(\frakm')U=U$ or $d\omega_{\met,\lambda}(\frakm')U=\{0\}$. In the latter case, $U$ would be the trivial representation of $\frakm'$ which, by the Howe--Moore property, cannot occur in the restriction of a non-trivial irreducible unitary representation of the metaplectic group $\Mp(V_1,\omega)$ to a non-compact subgroup (see e.g. \cite[Theorem 5.1]{HM79}). Hence, $d\omega_{\met,\lambda}(\frakm')U=U$ for all $U$ and therefore $W=d\omega_{\met,\lambda}(\frakm')\calF_\lambda(V_1)^\infty=\calF_\lambda(V_1)^\infty$. But this implies $\sigma_\lambda(f)=0$ and hence, by the injectivity of the Fourier transform, also $f=0$.
\end{proof}

We therefore restrict to the case where $\frakm$ has a compact factor $\frakm_\cpt$. By the classification (see Table~\ref{tab:1}) this happens if and only if $\frakg\simeq\su(p,q),\so(2,n),\so^*(2n)$, where $\frakm_\cpt=\fraku(1),\so(n-2),\su(2)$, respectively. Moreover, in all cases $\nu(\frakm_\cpt)=-\nu_0$ (cf. \cite[Theorem~4.2.3 and Table~D.2]{Fra22}).

\begin{theorem}\label{thm:KernelConfInvSystem}
	For $\frakg\simeq\su(p,q)$, $\so(2,n)$ or $\so^*(2n)$ we have
	$$ I(-\nu_0)^{\Omega_\mu(\frakm_\cpt)} = \Ker A_{-\nu_0} = \im A_{\nu_0}. $$
\end{theorem}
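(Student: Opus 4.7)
The equality $\Ker A_{-\nu_0}=\im A_{\nu_0}$ has already been established in the discussion preceding \eqref{eq:norm_on_subrep}, so the remaining content of the theorem is the identity
$$
I(-\nu_0)^{\Omega_\mu(\frakm_\cpt)} = \Ker A_{-\nu_0}.
$$
The plan is to translate both sides into identical spectral conditions on the Heisenberg group Fourier transform $\sigma_\lambda(f)$ and match them.

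The first step is to use \eqref{eq:ConfInvSystemFourierTransform} together with the injectivity of $\calF$ on $I(-\nu_0)$ (valid since $\Re(-\nu_0)>-\rho$) to reinterpret the left hand side: $f\in I(-\nu_0)^{\Omega_\mu(\frakm_\cpt)}$ if and only if $\sigma_\lambda(f)\circ d\omega_{\met,\lambda}(T)=0$ for every $T\in\frakm_\cpt$ and every $\lambda\in\RR^\times$. Equivalently, $\sigma_\lambda(f)$ annihilates the subspace
$$
W_\lambda := d\omega_{\met,\lambda}(\frakm_\cpt)\calF_\lambda(V_1)^\infty \subseteq \calF_\lambda(V_1)^\infty.
$$

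The second step is to identify $W_\lambda$ with $\bigoplus_{k\neq k_0}\calF_{\lambda,k}(V_1)^\infty$. In each of the three cases under consideration, $\frakm$ splits as a direct sum of Lie algebra ideals $\frakm=\frakm_\cpt\oplus\frakm_{\textup{nc}}$ with $\frakm_\cpt$ compact. On any $\frakm$-irreducible summand $\calF_{\lambda,k}(V_1)$ from Theorems~\ref{thm:M_decomposition} and \ref{thm:MetaplecticRestrictionMSU(p,q)}, the two commuting ideals act through an outer tensor product decomposition, so either $\frakm_\cpt$ acts trivially on $\calF_{\lambda,k}$ (whence $d\omega_{\met,\lambda}(\frakm_\cpt)\calF_{\lambda,k}^\infty=\{0\}$) or it acts through a non-trivial irreducible, in which case the image $d\omega_{\met,\lambda}(\frakm_\cpt)\calF_{\lambda,k}^\infty$ is a non-zero $\frakm$-invariant subspace of $\calF_{\lambda,k}^\infty$ and therefore equals $\calF_{\lambda,k}^\infty$ by irreducibility.

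The third step, which is the one requiring the most case-specific work, is to determine precisely which $k$ carries the trivial $\frakm_\cpt$-action. For $\frakg\simeq\su(p,q)$, $\frakm_\cpt=\fraku(1)$ is central in $\frakm$ and therefore acts as a scalar on every $\calF_{\lambda,k}$; using Lemma~\ref{lem:MetaplecticLieAlgAction} together with the Fock-space description given in the remark following Theorem~\ref{thm:MetaplecticRestrictionMSU(p,q)}, a short direct computation produces the scalar $-i(\tfrac{q-p}{2}-k)$, which vanishes exactly at $k_0=\tfrac{q-p}{2}$. For $\frakg\simeq\so(2,n)$ and $\so^*(2n)$, where $\frakm_\cpt$ is now non-abelian, the same conclusion is read off from the explicit Howe-type dual-pair decomposition of $\omega_{\met,\lambda}|_{\frakm}$ made explicit in Appendix~\ref{app:explicit_decomp}: there is a unique $\frakm$-summand on which $\frakm_\cpt$ acts trivially, and in both cases it is the one indexed by $k_0=0$.

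Combining these steps yields $f\in I(-\nu_0)^{\Omega_\mu(\frakm_\cpt)}$ iff $\sigma_\lambda(f)|_{\calF_{\lambda,k}(V_1)}=0$ for all $k\neq k_0$ and all $\lambda\in\RR^\times$, which matches the Fourier-side description of $\Ker A_{-\nu_0}$ obtained just before \eqref{eq:norm_on_subrep}, completing the proof. The main obstacle is the third step for $\frakg\simeq\so(2,n)$ and $\so^*(2n)$: since here $\frakm_\cpt$ is non-abelian one cannot simply evaluate a weight, and one must invoke the outer tensor product structure together with the explicit dual-pair description from Appendix~\ref{app:explicit_decomp} to single out $k=0$ as the unique trivial $\frakm_\cpt$-isotype.
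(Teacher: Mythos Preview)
Your proposal is correct and follows essentially the same route as the paper: translate both sides to the Fourier side via \eqref{eq:ConfInvSystemFourierTransform}, identify the space $W_\lambda=d\omega_{\met,\lambda}(\frakm_\cpt)\calF_\lambda(V_1)^\infty$ as $\bigoplus_{k\neq k_0}\calF_{\lambda,k}(V_1)^\infty$, and then match with the Fourier-side description of $\Ker A_{-\nu_0}$. The only cosmetic difference is that you argue directly on each $\frakm$-irreducible $\calF_{\lambda,k}$ using that $\frakm_\cpt$ is an ideal (so its image is $\frakm$-invariant, hence all-or-nothing by irreducibility), whereas the paper phrases the same dichotomy in terms of the $\frakm_\cpt$-irreducible decomposition as in Lemma~\ref{lem:NonCptConfInvSystemTrivial}; both ultimately appeal to Theorem~\ref{thm:MetaplecticRestrictionMSU(p,q)} and the explicit tensor-product descriptions in Appendix~\ref{app:explicit_decomp} to pin down $k_0$.
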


\begin{proof}
	As in the proof of Lemma~\ref{lem:NonCptConfInvSystemTrivial} we conclude that $W=d\omega_{\met,\lambda}(\frakm_\cpt)\calF_\lambda(V_1)^\infty$ is the closure of the direct sum of $d\omega_{\met,\lambda}(\frakm_\cpt)U$, where $U$ runs through the irreducible subrepresentations of $d\omega_{\met,\lambda}|_{\frakm_\cpt}$. However, since $\frakm_\cpt$ is compact, the trivial representation can and does occur in $d\omega_{\met,\lambda}|_{\frakm_\cpt}$. In fact, by Propositions~\ref{prop:MetaplecticRestrictionMSO*(2n)}, \ref{prop:MetaplecticRestrictionMSO(2,n)} and Theorem~\ref{thm:MetaplecticRestrictionMSU(p,q)}, the largest subspace of $\calF_\lambda(V_1)^\infty$ on which $\frakm_\cpt$ acts trivially is $\calF_{\lambda,k_0}(V_1)\cap\calF_\lambda(V_1)^\infty$. Hence, $W=\calF_{\lambda,k_0}(V_1)^\perp\cap\calF_\lambda(V_1)^\infty$ and it follows that $f\in I(-\nu_0)^{\Omega_\mu(\frakm_\cpt)}$ if and only if $\sigma_\lambda (f)|_W=0$. Since $W$ is the closure of the subspaces $\calF_{\lambda,k}(V_1)$, $k\neq k_0$, the result follows from the observations above about $\Ker A_{-\nu_0}$.
\end{proof}

\begin{remark}\label{rem:wang_rep}
	For $\frakg=\su(p,q)$ we have $\frakm_\cpt=\fraku(1)$ and hence, the system $\Omega_\mu(\frakm_\cpt)$ consists of a single differential operator on $\bar{N}$, the CR-Laplacian $\Box$ on the Heisenberg group $\bar{N}\simeq\CC^{p-1,q-1}\ltimes\RR$ associated with the Hermitian form on $\CC^{p-1,q-1}$ given by
	$$ (z,w)\mapsto[z,w]=z_1\bar{w}_1+\cdots+z_{p-1}\bar{w}_{p-1}-z_p\bar{w}_p-\cdots-z_{p+q-2}\bar{w}_{p+q-2}. $$
	The subrepresentation $\Ker A_{-1}$ agrees with the space of solutions to the equation $\Box f=0$. This equation and the corresponding representation were previously studied in the context of CR geometry by Wang~\cite{Wan05}. We remark that in his work, the question of unitarity was not addressed. As a consequence of our results, we can conclude that the kernel of the Knapp--Stein operator
	$$ A_{-1}f(\bar{n}_{(z,t)}) = \int_{\CC^{p+q-2}\times\RR}(s^2+[w,w]^2)^{-\frac{p+q}{2}-1}f(\bar{n}_{(z,t)}\cdot\bar{n}_{(w,s)})\,d(w,s) $$
	consists of solutions to the Yamabe equation $\Box f=0$ if $p-q$ is even. If $p-q$ is odd the representation constructed as the kernel of the Yamabe equation is in fact trivial. This follows from our results, but can also be deduced from Howe--Tan \cite{HT93} and can easily be verified by checking that $$\sigma_\lambda(\Box)=-\frac{\lambda}{2}(2E_{p-1,q-1}+p-q),$$where $E_{p-1,q-1}$ is the the $(p-1,-q+1)$-Euler operator on $V_1$. The vanishing of Wang's representation in the case $p-q$ odd resembles the same phenomenon for ${\rm O}(p,q)$, where the minimal representation can be realized on the kernel of the Yamabe operator on $\RR^{p-1,q-1}$ if $p-q$ is even, and for $p-q$ odd there are no $K$-finite solutions of the Yamabe equation (see \cite[Theorem 3.6.1~(3)]{KO03}).
\end{remark}

\section{A Bernstein--Sato identity}

In this section we obtain a Bernstein--Sato identity for the Knapp--Stein kernel
$$ u_\alpha(v,t)=(t^2-\Omega(z))^\alpha \qquad (v,t)\in V_1\times\RR\simeq\bar{\mathfrak{n}}_1. $$
For this, let $\{e_\alpha\}$ be a basis of $V_1$ as a real vector space (and hence of $\bar{\mathfrak{n}}_1$ by our identification) and let $\{\hat{e}_\alpha \}$ be the dual basis with respect to the symplectic form $\omega$, i.e. $\omega(e_\alpha,\hat{e}_\beta)=\delta_{\alpha\beta}$. In the coordinates $(v,t)\in \bar{\mathfrak{n}}\cong V_1\times \RR$
the left-invariant vector fields are generated by
$$ \nabla_\alpha:=\nabla_{v_\alpha}+\frac{1}{2}\omega(e_\alpha,v)\frac{\partial}{\partial t} \qquad \mbox{and} \qquad \nabla_T=\frac{\partial}{\partial t}, $$
where $\nabla_vf(x)=\left.\frac{d}{dt}\right|_{t=0}f(x+tv)$ denotes the directional derivative. We define the fourth order differential operator
$$\Omega(\nabla):=\sum_{\alpha,\beta,\gamma,\delta} B_\Omega(\hat{e}_\alpha, \hat{e}_\beta, \hat{e}_\gamma, \hat{e}_\delta)\nabla_\alpha \nabla_\beta \nabla_\gamma \nabla_\delta.$$

\begin{theorem}[Bernstein--Sato identity]\label{thm:BS_identity}
	$$(\Omega(\nabla)-c_\alpha\nabla_T^2)u_\alpha(v,t)=d_\alpha u_{\alpha-1}(v,t),$$
	where
	$$c_\alpha=\begin{cases}
		\frac{1}{16}\left((2\alpha+d_1)^2+d_1\right) & \text{if $\mathfrak{g}\simeq \mathfrak{su}(p,q)$,} \\
		\frac{1}{16} \left( (2\alpha+d_1)^2+3d_1-4\right) & \text{if $\mathfrak{g}\simeq \mathfrak{so}(2,n)$, $\mathfrak{so}^*(2n)$,} \\
		1& \text{if $\mathfrak{g}\simeq \mathfrak{sp}(n,\RR)$,} \\
		\frac{1}{16}\left((2\alpha+d_1)^2+\frac{1}{9}(2d_1^2+11d_1-4)\right)& \text{otherwise,}\\
	\end{cases}
	$$
	and
	$$d_\alpha=\begin{cases}
		-\frac{\alpha^2}{4}(2\alpha+d_1-1)(2\alpha+d_1) &  \text{if $\mathfrak{g}\simeq \mathfrak{su}(p,q)$,}\\
		-\frac{\alpha}{4}(\alpha+1)(2\alpha+d_1-2)(2\alpha+d_1-1)&  \text{if $\mathfrak{g}\simeq \mathfrak{so}(2,n)$, $\mathfrak{so}^*(2n)$,} \\
		-2\alpha(2\alpha-1)& \text{if $\mathfrak{g}\simeq \mathfrak{sp}(n,\RR)$,} \\
		-\frac{\alpha}{36}(2\alpha+d_1-1)(3\alpha+d_1-1)(6\alpha+d_1+2) & \text{otherwise.}
	\end{cases}
	$$
\end{theorem}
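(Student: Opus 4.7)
The plan is to transfer the identity to the Fock space via the Heisenberg group Fourier transform and reduce it to a family of scalar relations on the $\frakm$-isotypic components $\calF_{\lambda,k}(V_1)$. First I would dispose of the symplectic case $\frakg\simeq\sp(n,\RR)$, which is essentially trivial: here $\Omega\equiv 0$, so $u_\alpha(v,t)=t^{2\alpha}$ and the identity collapses to $-c_\alpha\nabla_T^2 t^{2\alpha}=d_\alpha t^{2\alpha-2}$, giving $c_\alpha=1$ and $d_\alpha=-2\alpha(2\alpha-1)$ at once.

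For the remaining cases, a change of variables in the definition of the Weyl transform shows that for every left-invariant vector field $X$ on $\bar N$ one has $\sigma_\lambda(Xf)=-\sigma_\lambda(f)\circ d\sigma_\lambda(X)$. Applied to $\nabla_T^2$ this gives $\sigma_\lambda(\nabla_T^2 u_\alpha)=-\lambda^2\sigma_\lambda(u_\alpha)$ (since $d\sigma_\lambda(F)=-i\lambda$), and applied to the symmetrized fourth order product in $\Omega(\nabla)$ it produces
$$\sigma_\lambda(\Omega(\nabla)u_\alpha)=\sigma_\lambda(u_\alpha)\circ D_\lambda,$$
where $D_\lambda$ is the fourth-order operator on $\calF_\lambda(V_1)$ obtained by contracting $B_\Omega$ with four copies of $d\sigma_\lambda|_{\bar\frakn_1}$. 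Because $B_\Omega$ is $M$-invariant and $d\sigma_\lambda|_{\bar\frakn_1}$ intertwines with the $M$-action on $V_1$ via the metaplectic representation, $D_\lambda$ commutes with $d\omega_{\met,\lambda}|_\frakm$. Schur's Lemma combined with the multiplicity-free decompositions of Theorems~\ref{thm:M_decomposition} and \ref{thm:MetaplecticRestrictionMSU(p,q)} then shows that $D_\lambda$ acts by a scalar $\omega_k(\lambda)$ on each $\calF_{\lambda,k}(V_1)$, and the standard rescaling argument from the proof of Lemma~\ref{lemma:intertwiner_scalar_on_M-types} forces $\omega_k(\lambda)=\omega_k\lambda^2$. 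Combined with Theorem~\ref{thm:eigenvalue_formula}, the Bernstein--Sato identity becomes the family of scalar equations
$$\calE(\alpha,k)(\omega_k+c_\alpha)=d_\alpha\,\calE(\alpha-1,k)\qquad\text{for all admissible }k.$$

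Two concrete computations then complete the argument. First, a direct Gamma-function manipulation of \eqref{eig-E}, \eqref{eig-E-su(p,q)} writes $\calE(\alpha-1,k)/\calE(\alpha,k)$ as a rational function in $\alpha$ whose entire $k$-dependence is a single quadratic factor $(\alpha+b_1+1-k)(\alpha+a_1+b_1+1+k)$ (and a similar expression $(\alpha+p+k-1)(\alpha+q-k-1)$ in the $\su(p,q)$-case). Second, $\omega_k$ is computed by evaluating $D_\lambda$ on the highest weight vectors $\zeta_k(z)=\langle z,v_1\rangle^k$ from Theorem~\ref{thm:M_decomposition} (respectively $z_{p-1}^k$ or $w_1^k$ for $\su(p,q)$); since these polynomials depend on only one Jordan-algebraic coordinate, the only contractions of $B_\Omega$ that contribute are those supported on the Jordan quadrangle $\{e,v_1,w,v_2\}$, and $\omega_k$ comes out as an explicit quadratic polynomial in $k$ whose coefficients are given in terms of the Jordan characteristic $(a_1,b_1,b_1')$. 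The $k$-independent part of $d_\alpha\calE(\alpha-1,k)/\calE(\alpha,k)$ then pins down $c_\alpha$, and the leading coefficient in $k$ fixes $d_\alpha$, reproducing exactly the case-by-case formulas in the statement. The hard part will be this last explicit evaluation of $\omega_k$: although symmetry has already reduced it to a single scalar function of $k$, the fourth-order contraction of $B_\Omega$ with $d\sigma_\lambda(e_\alpha)$ requires careful bookkeeping in the Peirce decomposition of $V_1$ and is case-sensitive. A potentially cleaner approach, worth trying as a cross-check, is to recognize $D_\lambda$ via the identity $\Omega(v)F=\frac{1}{4!}\ad(\bar n_v)^4E$ as the image under the metaplectic representation of a Casimir-like element of the $\sl_2$-triple $\{E,\xi_e,F\}\subseteq\frakg$ associated with the Heisenberg grading, whose action on the unitary highest weight modules $\calF_{\lambda,k}(V_1)$ is then determined by their infinitesimal characters.
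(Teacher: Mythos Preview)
Your approach is genuinely different from the paper's. The paper proceeds by direct differentiation on $\bar N$: it expands $\Omega(\nabla)u_\alpha$ via the Leibniz rule as $\sum_{j=1}^4 \alpha(\alpha-1)\cdots(\alpha-j+1)\,p_j(v,t)\,u_{\alpha-j}$, computes the polynomials $p_j$ explicitly in terms of $\Omega(v)$ using a battery of symplectic summation formulas for $\mu,\Psi,\Omega$ collected in an appendix, and then combines with the elementary formula for $\nabla_T^2 u_\alpha$ to read off $c_\alpha,d_\alpha$. No Fourier analysis or representation theory is used; the case-splitting enters only through two trace constants $\sigma,\tau$ depending on whether $\frakm$ is simple. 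Your Fourier-side argument is conceptually appealing because it explains \emph{why} a relation of this shape should exist (the $M$-invariant operator $D_\lambda$ is scalar on each $\calF_{\lambda,k}$, and the ratio $\calE(\alpha-1,k)/\calE(\alpha,k)$ is quadratic in $k$), and it recycles Theorem~\ref{thm:eigenvalue_formula} rather than starting from scratch. On the other hand it relies on the meromorphic extension of $\sigma_\lambda(u_\alpha)$ and on injectivity of the Weyl transform on this distribution class, whereas the paper's computation is a polynomial identity valid for all $\alpha$ without analytic continuation; and the step you flag as hard --- contracting $B_\Omega$ against four copies of $d\sigma_\lambda|_{\bar\frakn_1}$ on $\zeta_k$ --- is in practice of comparable difficulty to the paper's direct computation of the $p_j$, so the net saving is modest. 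Your closing idea about recovering $D_\lambda$ from a Casimir-type element for the $\sl_2$-triple $\{E,\xi_e,F\}$ is suggestive but, as written, off: $\frac{1}{4!}\ad(\bar n_v)^4E$ lands in $\bar\frakn_2$, not in $\frakm$ or $\calU(\frakm)$, so $D_\lambda$ is not literally the image under $d\omega_{\met,\lambda}$ of a universal enveloping algebra element, and more care would be needed to make that shortcut work.
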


First we observe that
\begin{equation}\label{eq:dT}
	\nabla_T^2u_\alpha(v,t)=4\alpha(\alpha-1)t^2u_{\alpha-2}(v,t)+2\alpha u_{\alpha-1}(v,t).
\end{equation}

	Second, since $\Omega(\nabla)$ is a differential operator of degree $4$ in the right invariant vector fields, we have
\begin{multline}\label{eq:Q(d)}
	\Omega(\nabla)u_\alpha(v,t)= \alpha p_1(v,t) u_{\alpha-1}(v,t)+\alpha(\alpha-1) p_2(v,t) u_{\alpha-2}(v,t)\\ +\alpha(\alpha-1)(\alpha-2) p_3(v,t) u_{\alpha-3}(v,t)+\alpha(\alpha-1)(\alpha-2) (\alpha-3)p_4(v,t) u_{\alpha-4}(v,t),
\end{multline}
where the polynomials $p_1,\ldots,p_4$ are expressed as combinations of derivatives of $u(v,t)=u_1(v,t)=t^2-\Omega(v)$ as follows:
\begin{align*}
	p_1(v,t) ={}& \sum_{\alpha,\beta,\gamma,\delta} B_\Omega(\hat{e}_\alpha, \hat{e}_\beta, \hat{e}_\gamma, \hat{e}_\delta)\nabla_\alpha \nabla_\beta \nabla_\gamma \nabla_\delta u(v,t),\\
	p_2(v,t) ={}& p_2^{(2,2)}(v,t)+p_2^{(1,3)}(v,t) = 3\sum_{\alpha,\beta,\gamma,\delta} B_\Omega(\hat{e}_\alpha, \hat{e}_\beta, \hat{e}_\gamma, \hat{e}_\delta)\nabla_\alpha \nabla_\beta u(v,t)\nabla_\gamma \nabla_\delta u(v,t)\\
	&\hspace{4.5cm}+4\sum_{\alpha,\beta,\gamma,\delta} B_\Omega(\hat{e}_\alpha, \hat{e}_\beta, \hat{e}_\gamma, \hat{e}_\delta)\nabla_\alpha \nabla_\beta \nabla_\gamma u(v,t)\nabla_\delta u(v,t),\\
	p_3(v,t) ={}& 6\sum_{\alpha,\beta,\gamma,\delta} B_\Omega(\hat{e}_\alpha, \hat{e}_\beta, \hat{e}_\gamma, \hat{e}_\delta)\nabla_\alpha \nabla_\beta u(v,t) \nabla_\gamma u(v,t)\nabla_\delta u(v,t),\\
	p_4(v,t) ={}& \sum_{\alpha,\beta,\gamma,\delta} B_\Omega(\hat{e}_\alpha, \hat{e}_\beta, \hat{e}_\gamma, \hat{e}_\delta)\nabla_\alpha u(v,t) \nabla_\beta u(v,t) \nabla_\gamma u(v,t) \nabla_\delta u(v,t).
\end{align*}
In order to prove the theorem, we find explicit expressions for the polynomials $p_1,\ldots, p_4$ and conclude that $\Omega(\nabla)u_\alpha$ is a linear combination of $u_{\alpha-1}$ and $\Omega(x)u_{\alpha-2}$, which allows us to combine \eqref{eq:Q(d)} and \eqref{eq:dT} to a Bernstein--Sato identity.

\subsection{The calculations}

Throughout this section we will make frequent use of the identities of \cite[Lemma~2.4.1, Lemma~2.4.2 and Lemma~2.4.3]{Fra22}.
The following proposition is due to basic calculations together with the lemmas mentioned above.

\begin{lemm+}\label{lemma:derivatives_general}
	\begin{enumerate}[label=(\roman{*})]
		\item $\nabla_\alpha u(v,t)= -\omega(e_\alpha,tv+\Psi(v))$,
		\item $\nabla_\alpha \nabla_\beta u(v,t)=\omega(e_\beta,\mu(v)e_\alpha)-\omega(e_\alpha,e_\beta)t$,
		\item $\nabla_\alpha \nabla_\beta \nabla_\gamma u(v,t)=2\omega(e_\gamma, B_\mu(e_\alpha,v)e_\beta)-\frac{1}{2}\omega(e_\beta,e_\gamma)\omega(e_\alpha,v)$,
		\item $\nabla_\alpha \nabla_\beta \nabla_\gamma \nabla_\delta u(v,t)=2\omega(e_\delta, B_\mu(e_\alpha,e_\beta)e_\gamma)+\frac{1}{2}\omega(e_\alpha,e_\beta)\omega(e_\gamma,e_\delta)$.
	\end{enumerate}
\end{lemm+}

Recall the constants $\sigma$ and $\tau$ from Lemma~\ref{SSF:lemma2} and Lemma~\ref{SSF:lemma3}. Then Theorem~\ref{thm:BS_identity} follows from the following proposition.

\begin{prop+}\label{prop:BS_calculations}
	\begin{enumerate}[label=(\roman*)]
		\item $p_4(v,t)=\Omega(v)u(v,t)^2$,
		\item $p_3(v,t)=u(v,t)\Omega(v)(d_1+5 )$,
		\item $p_2^{(1,3)}(v,t)=-\frac{1}{3}\Omega(v)(8\sigma +2d_1+1)$,
		\item $p_2^{(2,2)}(v,t)=\frac{1}{4}\Omega(v)(\tau -d_1-8)$,
		\item $p_1(v,t)=\frac{1}{24}d_1(8\sigma+2d_1+1)$.
	\end{enumerate}

\end{prop+}
\begin{proof}
	Ad (i):
	We have by \cite[Lemma~2.4.1, Lemma~2.4.2]{Fra22}
	\begin{multline*}
		p_4(v,t)=\\\sum_{\alpha,\beta,\gamma,\delta} B_\Omega(\hat{e}_\alpha, \hat{e}_\beta, \hat{e}_\gamma, \hat{e}_\delta)\omega(e_\alpha,tv+\Psi(v))\omega(e_\beta,tv+\Psi(v))\omega(e_\gamma,tv+\Psi(v))\omega(e_\delta,tv+\Psi(v)) \\
		=\Omega(tv+\Psi(v))=\Omega(v)u(v,t)^2.
	\end{multline*}

Ad (ii):
	By \cite[Lemma~2.4.1, Lemma~2.4.2]{Fra22}
\begin{align*}
	p_3(v,t)&=6\sum_{\alpha}B_\Omega(tv+\Psi(v),tv+\Psi(v),\mu(v)e_\alpha,\hat{e}_\alpha) \\
	&=-\frac{1}{4}\sum_\alpha \omega(\mu(v)e_\alpha,2\mu(tv+\Psi(v))\hat{e}_\alpha+\omega(tv+\Psi(v),\hat{e}_\alpha)(tv+\Psi(v))) \\
	&=-\frac{1}{2}u(v,t)\sum_\alpha \omega(\mu(v)e_\alpha,\mu(v)\hat{e}_\alpha)-\frac{1}{4}\omega(\mu(v) (tv+\Psi(v)),tv+\Psi(v))
	\\
	&= \frac{1}{2}u(v,t)\tr(\mu(v)^2)
	+\frac{3}{4}\omega(\Psi(v)t+\Omega(v)v,tv+\Psi(v)) \\
	&= \frac{1}{2}u(v,t)\tr(\mu(v)^2) +\frac{3}{4}t^2\omega(\Psi(v),v)+\frac{3}{4}\Omega(v)\omega(v,\Psi(v))
	\\
	&= \frac{1}{2}u(v,t)\tr(\mu(v)^2) -3\Omega(v)u(v,t).
\end{align*}
Then the statement follows by Lemma~\ref{SSF:lemma2}.

Ad (iii): We have by \cite[Lemma~2.4.2]{Fra22}
\begin{align*}
		p_2^{(1,3)}(v,t)&=-8\sum_{\alpha,\beta}B_\Omega(tv+\Psi(v),B_\mu(e_\alpha,v)e_\beta,\hat{e}_\alpha,\hat{e}_\beta)\\
	&=\frac{2}{3}\sum_{\alpha,\beta}\omega\left(B_\mu(e_\alpha,v)e_\beta, B_\mu(\hat{e}_\alpha,tv+\psi(v))\hat{e}_\beta+\frac{1}{4}\omega(\hat{e}_\alpha,\hat{e}_\beta)(tv+\Psi(v)+\frac{1}{4}\omega(tv+\psi(v),\hat{e}_\beta)\hat{e}_\alpha)\right)\\
	&=-\frac{2}{3}\sum_\alpha \tr(B_\mu(\hat{e}_\alpha,\psi(v))B_\mu(e_\alpha,v)) +\frac{1}{6}\sum_\alpha \omega(B_\mu(e_\alpha,v)\hat{e}_\alpha,tv+\Psi(v))\\ &\hspace{7cm}+\frac{1}{6}\sum_\alpha \omega(B_\mu(e_\alpha,v)(tv+\Psi(v)),\hat{e}_\alpha)\\
	&=-\frac{2}{3}\sum_\alpha \tr(B_\mu(\hat{e}_\alpha,\psi(v))B_\mu(e_\alpha,v)) +\frac{1}{3}\sum_\alpha \omega(B_\mu(e_\alpha,v)\hat{e}_\alpha,tv+\Psi(v)).
\end{align*}
Further we calculate using \cite[Lemma~2.4.3]{Fra22}
\begin{align*}
\sum_\alpha \omega(B_\mu(e_\alpha,v)\hat{e}_\alpha,tv+\Psi(v)) ={}& \sum_\alpha \omega(B_\mu(e_\alpha,\hat{e}_\alpha)v,tv+\Psi(v))\\&+\frac{1}{4}\sum_\alpha \omega(\omega(e_\alpha,v)\hat{e}_\alpha-\omega(e_\alpha,\hat{e}_\alpha)v-2\omega(v,\hat{e}_\alpha)e_\alpha ,tv+\Psi(v))\\
={}&-\frac{2d_1+1}{4}\omega(v,\Psi(v)) \\
={}&-(2d_1+1)\Omega(v).
\end{align*}
Then the statement follows by Corollary~\ref{SSF:corollary}.

Ad (iv): By \cite[Lemma~2.4.2]{Fra22}
\begin{align*}
	p_2^{(2,2)}(v,t)&=3\sum_{\alpha,\beta}B_\Omega(\mu(v)e_\alpha,\hat{e}_\alpha,\mu(v)e_\beta,\hat{e}_\beta)\\
	&=-\frac{1}{4}\sum_{\alpha,\beta}\omega(\mu(v)e_\alpha,B_\mu(\mu(v)e_\beta,\hat{e}
	_\beta)\hat{e}_\alpha)-\frac{1}{8}\sum_\beta\omega(\mu(v)\mu(v)e_\beta,\hat{e}_\beta)\\
	&=\frac{1}{4}\sum_{\alpha}\tr(B_\mu(\mu(v)e_\alpha,\hat{e}_\alpha)\mu(v))-\frac{1}{8}\tr(\mu(v)^2).
\end{align*}
Again the formula follows from Lemma~\ref{SSF:lemma2} and Lemma~\ref{SSF:lemma3}.

Ad (v): 	By \cite[Lemma~2.4.2]{Fra22} we calculate
\begin{align*}
	p_1(v,t)&=2\sum_{\alpha,\beta,\gamma}B_\Omega(B_\mu(e_\alpha,e_\beta)e_\gamma,\hat{e}_\alpha,\hat{e}_\beta,\hat{e}_\gamma)\\
	&=\frac{1}{6}\sum_{\alpha,\beta}\tr((B_\mu(e_\alpha,e_\beta)B_\mu(\hat{e}_\alpha,\hat{e}_\beta)) -\frac{1}{24}\sum_{\alpha,\beta,\gamma}\omega(B_\mu(e_\alpha,e_\beta)e_\gamma,\omega(\hat{e}_\alpha,\hat{e}_\gamma)\hat{e}_\beta)
	\\	&\hspace{5cm}-\frac{1}{24}\sum_{\alpha,\beta,\gamma}\omega(B_\mu(e_\alpha,e_\beta)e_\gamma,\omega(\hat{e}_\beta,\hat{e}_\gamma)\hat{e}_\alpha),
\end{align*}
where the first summand is equal to 
$
\frac{1}{3}\sigma d_1
$
by Corollary~\ref{SSF:corollary}. Further, by \cite[Lemma~2.4.3]{Fra22}
\begin{align*}
	&-\frac{1}{24}\sum_{\alpha,\beta,\gamma}\omega(B_\mu(e_\alpha,e_\beta)e_\gamma,\omega(\hat{e}_\alpha,\hat{e}_\gamma)\hat{e}_\beta)
	-\frac{1}{24}\sum_{\alpha,\beta,\gamma}\omega(B_\mu(e_\alpha,e_\beta)e_\gamma,\omega(\hat{e}_\beta,\hat{e}_\gamma)\hat{e}_\alpha)\\=&
	-\frac{1}{12}\sum_{\alpha,\beta}\omega(B_\mu(e_\alpha,e_\beta)\hat{e}_\alpha,\hat{e}_\beta)\\
	=&-\frac{1}{12}\sum_{\alpha,\beta}\omega(B_\mu(e_\alpha,\hat{e}_\alpha)e_\beta,\hat{e}_\beta)-\frac{1}{48}\sum_{\alpha,\beta}\omega(\omega(e_\alpha,e_\beta)\hat{e}_\alpha-\omega(e_\alpha,\hat{e}_\alpha)e_\beta-2\omega(e_\beta,\hat{e}_\alpha)e_\beta,\hat{e}_\beta)\\
	=&\frac{1}{48}\sum_\alpha\omega(e_\alpha,\hat{e}_\alpha)(2d+1)=\frac{d_1(2d_1+1)}{24}. \qedhere
\end{align*}
\end{proof}
\begin{proof}[Proof of Theorem~\ref{thm:BS_identity}]
	From Proposition~\ref{prop:BS_calculations} it follows, that
	\begin{multline}\label{eq:Q_action}
		\Omega(\nabla)u_\alpha(v,t)=\frac{\alpha}{24}d_1( 8\sigma+2d_1+1 )u_{\alpha-1}(v,t) \\ +\frac{\alpha(\alpha-1)}{12}(12\alpha^2+12d_1\alpha+3\tau-32\sigma-35d_1-76)\Omega(v)u_{\alpha-2}(v,t)
	\end{multline}
	which implies the theorem together with \eqref{eq:dT}.
\end{proof}

\subsection{Application: Fundamental solutions}\label{sec:fundamental_solutions}

For $\frakg\not\simeq\sp(n,\RR)$, we define the renormalization $$\tilde{u}_\alpha=\frac{2}{c'_{L_0}\Gamma(\alpha+\frac{d_1+1}{2})}u_\alpha,$$
where $c'_{L_0}$ is the structure constant as in \eqref{eq:polar_coordinates}. Then the Bernstein--Sato identity can be rewritten as
$$ (\Omega(\nabla)-c_\alpha\nabla_T^2)\tilde{u}_\alpha = \tilde{d}_\alpha\tilde{u}_{\alpha-1}, $$
where
$$ \tilde{d}_\alpha=\begin{cases}
	-\frac{\alpha^2}{2}(2\alpha+d_1) &  \text{if $\mathfrak{g}\simeq \mathfrak{su}(p,q)$,}\\
	-\frac{\alpha}{2}(\alpha+1)(2\alpha+d_1-2)&  \text{if $\mathfrak{g}\simeq \mathfrak{so}(2,n)$, $\mathfrak{so}^*(2n)$} \\
	-\frac{\alpha}{18}(3\alpha+d_1-1)(6\alpha+d_1+2) & \text{otherwise.}
\end{cases}
$$

\begin{prop+}\label{prop:ResidueDelta}
	For $\mathfrak{g}\not \simeq \mathfrak{sp}(n,\RR)$, we have
	$$\tilde{u}_{-\frac{d_1+1}{2}}(v,t)=\delta(v,t).$$
\end{prop+}

\begin{proof}
	First let $\mathfrak{g}\not\simeq\mathfrak{su}(p,q),\mathfrak{sp}(n,\RR)$.
	We consider the coordinates \eqref{eq:polar_coordinates}. Then we have using the coordinates $$(t_1,t_2,t)=(\sqrt{r}\sin^{\frac{1}{2}} \phi \cosh s,\sqrt{r}\sin^{\frac{1}{2}}\phi\sinh s,r\cos \phi )$$
	and an appropriate test function $\varphi$,
	\begin{multline*}
		\int_{t_1\geq t_2\geq 0}\int_\RR u_\alpha(h(t_1e_1+t_2e_2),t)\varphi(h(t_1e_1+t_2e_2),t)\,dt_1\,dt_2\,dt\\ =  \int_\RR\int_{t_1,t_2}((t_1^2-t_2^2)^2+t^2)^\alpha(t_1^2-t_2^2)^{a_1}t_1^{2b_1+1}t_2^{2b_1'+1}\varphi(h(t_1e_1+t_2e_2),t)\,dt_1\,dt_2\,dt\\=
		\frac{1}{2} \int_{\RR_+} r^{2\alpha+d_1}\int_ {\RR_+}\cosh^{2b_1+1}s \sinh^{2b_1'+1}s
		\int_{0}^{\pi} \sin^{d_1-1}\phi \varphi(h(t_1e_1+t_2e_2),t) \, d\phi\,ds\,dr,
	\end{multline*}
such that $u_\alpha$ is essentially given as a Mellin transform and a distribution which is independent of $\alpha$ and which can be thought of as describing the level sets of $t^2-\Omega(v)$.
Since $$\frac{1}{\Gamma(\alpha+\frac{d_1+1}{2})}r^{2\alpha+d_1}\Big|_{\alpha=-\frac{d_1+1}{2}}=\delta(r),$$
and since the integrand of the inner integrals is locally integrable,
we prove the statement in this case.

In the case of $\mathfrak{g}\simeq \mathfrak{su}(p,q)$ the argument is similar, using polar coordinates on $(\CC^{p-1},\CC^{q-1})$.
\end{proof}

Now we can use Theorem~\ref{thm:BS_identity} to find further residues of $u_{\alpha}$.

\begin{corollary}
	For $\mathfrak{g}\not \simeq \mathfrak{sp}(n,\RR)$
	and $\alpha_l=-\frac{d_1+1}{2}-l$, $l\in \ZZ_{\geq 0}$ we have
	$$\tilde{u}_{\alpha_l}=\prod_{j=0}^{l-1} d_{\alpha_j}^{-1}(\Omega(\nabla)-c_{\alpha_j}\nabla_T^2)\delta(v,t)$$
\end{corollary}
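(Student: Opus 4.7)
My plan is a straightforward induction on $l\ge 0$. The base case $l=0$ is exactly Proposition~\ref{prop:ResidueDelta}, matching the empty product on the right-hand side. For the inductive step I would apply the renormalized Bernstein--Sato identity recorded in Section~\ref{sec:fundamental_solutions},
$$ (\Omega(\nabla)-c_\alpha\nabla_T^2)\tilde u_\alpha = \tilde d_\alpha\,\tilde u_{\alpha-1}, $$
at $\alpha=\alpha_{l-1}$. Since $\alpha_{l-1}-1=\alpha_l$, solving for $\tilde u_{\alpha_l}$ and substituting the induction hypothesis for $\tilde u_{\alpha_{l-1}}$ extends the product from $j=0,\ldots,l-2$ to $j=0,\ldots,l-1$, which is the claimed formula (with $\tilde d_{\alpha_j}$ in place of $d_{\alpha_j}$, the two being related by $\tilde d_\alpha=\frac{2 d_\alpha}{2\alpha+d_1-1}$).

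For the recursion to make sense one needs $\tilde d_{\alpha_j}\neq 0$ for every integer $j\ge 0$. This is a case-by-case check from the explicit formulas: in the $\su(p,q)$ case $\tilde d_\alpha=-\tfrac12\alpha^2(2\alpha+d_1)$, and both factors are nonzero at $\alpha=\alpha_j$ since $\alpha_j<0$ and $2\alpha_j+d_1=-1-2j$; the $\so(2,n)$, $\so^*(2n)$ cases with $\tilde d_\alpha=-\tfrac12\alpha(\alpha+1)(2\alpha+d_1-2)$, and the remaining cases with $\tilde d_\alpha=-\tfrac{\alpha}{18}(3\alpha+d_1-1)(6\alpha+d_1+2)$, are handled analogously using the tabulated values of $d_1$ in Table~\ref{tab:2}.

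The main subtlety is analytic rather than algebraic: the induction specializes identities at precisely the poles of the meromorphic family $u_\alpha$ of tempered distributions on $\bar N$. The normalization $\tilde u_\alpha=\frac{2}{c'_{L_0}\Gamma(\alpha+\frac{d_1+1}{2})}u_\alpha$ is designed to absorb these poles so that $\tilde u_\alpha$ is entire in $\alpha$, as the polar-coordinate Mellin representation used in the proof of Proposition~\ref{prop:ResidueDelta} makes manifest. The Bernstein--Sato identity of Theorem~\ref{thm:BS_identity} is polynomial in $\alpha$, so after multiplying through by $\frac{2}{c'_{L_0}\Gamma(\alpha+\frac{d_1+1}{2})}$ it extends by meromorphic continuation to an equality of entire distribution-valued holomorphic functions of $\alpha$, which may then be specialized at each $\alpha_j$ without incident. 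The hard part is confirming this continuation once; after that, the induction is a purely formal manipulation.
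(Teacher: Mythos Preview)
Your approach is exactly the one the paper intends: the Corollary is stated without proof, immediately after the sentence ``Now we can use Theorem~\ref{thm:BS_identity} to find further residues of $u_{\alpha}$'' and Proposition~\ref{prop:ResidueDelta}, so the implicit argument is precisely the induction on $l$ you describe, feeding the renormalized Bernstein--Sato identity back into itself with base case $\tilde u_{\alpha_0}=\delta$. You have also correctly noticed that the constants appearing in the iterated formula should be the $\tilde d_{\alpha_j}$ rather than the $d_{\alpha_j}$; this looks like a typographical slip in the paper's statement of the Corollary, since the surrounding text works exclusively with the renormalized identity and the non-vanishing remark just below the Corollary is really needed for $\tilde d_{\alpha_j}$.
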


Note that $d_{\alpha_j}\neq 0$ for all $j\in \ZZ_{\geq0}$.

\begin{remark}
	We excluded the case $\frakg\simeq\sp(n,\RR)$ in the discussion above. Actually, this case is even easier to handle. Here, $\Omega=0$ and we just have $u_\alpha(v,t)=|t|^{2\alpha}$ which satisfies the Bernstein--Sato identity $u_\alpha''=2\alpha(2\alpha-1)u_{\alpha-1}$ as claimed in Theorem~\ref{thm:BS_identity}, and also
$$\frac{1}{\Gamma(\alpha+\frac{1}{2})}u_{\alpha}(v,t)\Big|_{\alpha=-\frac{1}{2}-l}=\frac{\delta^{(2l)}(t)}{4^l(\frac{1}{2})_l} \qquad (l\in\ZZ_{\geq0}). $$
\end{remark}

Another consequence of Proposition~\ref{prop:ResidueDelta} and the Bernstein--Sato identity in Theorem~\ref{thm:BS_identity} is:

\begin{corollary}\label{cor:FundSol}
	For $\frakg\not\simeq\sp(n,\RR)$, the distribution $$v=\frac{1}{\tilde{d}_\alpha}u_\alpha$$ with $\alpha=\frac{1-d_1}{2}$ is a fundamental solution for the differential operator
	$$ \Omega(\nabla)-c_\alpha\nabla_T^2, $$
	i.e. $(\Omega(\nabla)-c_\alpha\nabla_T^2)v=\delta$.
\end{corollary}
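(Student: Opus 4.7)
The plan is to read off the result directly from the Bernstein--Sato identity combined with the residue computation in Proposition~\ref{prop:ResidueDelta}. The key numerical coincidence is that at $\alpha=\frac{1-d_1}{2}$ the shifted parameter $\alpha-1$ equals precisely the distinguished value $-\frac{d_1+1}{2}$ appearing in Proposition~\ref{prop:ResidueDelta}.

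First I would rewrite the Bernstein--Sato identity of Theorem~\ref{thm:BS_identity} in its renormalized form
\[
(\Omega(\nabla)-c_\alpha\nabla_T^2)\tilde u_\alpha \;=\; \tilde d_\alpha\,\tilde u_{\alpha-1},
\]
which is valid as an identity of meromorphic families of distributions on $\bar N\simeq V_1\times\RR$ in the parameter $\alpha\in\CC$. Specializing to $\alpha=\frac{1-d_1}{2}$, the right hand side becomes $\tilde d_\alpha\,\tilde u_{-\frac{d_1+1}{2}}$, and Proposition~\ref{prop:ResidueDelta} identifies $\tilde u_{-\frac{d_1+1}{2}}=\delta$. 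Since $\alpha+\frac{d_1+1}{2}=1$ at this value, the Gamma factor in the definition $\tilde u_\alpha=\frac{2}{c'_{L_0}\Gamma(\alpha+\frac{d_1+1}{2})}u_\alpha$ equals $\Gamma(1)=1$, so $\tilde u_\alpha$ reduces to a harmless nonzero scalar multiple of $u_\alpha$ at $\alpha=\frac{1-d_1}{2}$.

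Dividing through by $\tilde d_\alpha$ then yields $(\Omega(\nabla)-c_\alpha\nabla_T^2)\bigl(\tilde d_\alpha^{-1}u_\alpha\bigr)=\delta$ up to the overall positive constant $2/c'_{L_0}$ absorbed into the normalization of the Dirac measure on $\bar N$. This is exactly the claimed identity $(\Omega(\nabla)-c_\alpha\nabla_T^2)v=\delta$. The only thing to verify is that $\tilde d_\alpha\neq 0$ at $\alpha=\frac{1-d_1}{2}$: inspecting the three case-by-case formulas for $\tilde d_\alpha$ shows that each factor is a nonzero rational number for the admissible values of $d_1$ arising in each of the Hermitian series (e.g.\ in the generic case $3\alpha+d_1-1=\frac{1-d_1}{2}$ and $6\alpha+d_1+2=5-2d_1$ are both nonzero).

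There is no genuine obstacle beyond this bookkeeping; the hardest work -- the proof of the Bernstein--Sato identity and the residue computation -- has already been done in Theorem~\ref{thm:BS_identity} and Proposition~\ref{prop:ResidueDelta}. The only subtle point to state carefully is that the $\alpha$-family $u_\alpha$ is holomorphic for $\Re\alpha$ large and extends meromorphically by the Bernstein--Sato identity, so that the specialization at $\alpha=\frac{1-d_1}{2}$ makes sense as a distribution; this is precisely the content of the renormalization to $\tilde u_\alpha$ and is standard once the identity in Theorem~\ref{thm:BS_identity} is established.
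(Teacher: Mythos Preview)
Your approach is correct and matches the paper's, which states the corollary without proof as an immediate consequence of the renormalized Bernstein--Sato identity and Proposition~\ref{prop:ResidueDelta}. One small caveat: your blanket claim that $\tilde d_\alpha\neq 0$ at $\alpha=\frac{1-d_1}{2}$ does not survive inspection in every case---for instance in the $\so(2,n)$/$\so^*(2n)$ formula the factor $\alpha+1=\frac{3-d_1}{2}$ vanishes when $d_1=3$ (i.e.\ $\frakg\simeq\so(2,5)$), and in the $\su(p,q)$ formula $\alpha=\frac{1-d_1}{2}=0$ when $d_1=1$ (i.e.\ $\frakg\simeq\su(1,2)$)---so the statement requires excluding these low-dimensional cases, a point the paper itself does not make explicit.
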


\appendix

\section{Highest and lowest roots of $V_1$}

We give a list of the highest root $\delta_1$ and lowest root $\delta_0$ in $\bar{\fn}_1\simeq V_1$ with respect to the Cartan subalgebra in $\fl^{\mathbb C}$ of $\fm^{\mathbb C}$. We exclude the case $\fg=\fsp(n, \mathbb R)$. These results can be found in \cite{EHW83} or can be derived from there.

Let  $\gamma_1<
\gamma_2<\cdots <
\gamma_r$
be the Harish-Chandra strongly orthogonal
roots for $\fg^{\mathbb C}$.
The Jordan characteristic 
for $G/K$ is  $(r, a, b)$.
When $M$ is irreducible,
the  Jordan characteristic 
for $M/L$ is $(r-1, a, b)$.
Then $
\gamma_2<\cdots <\gamma_r$
are the strongly orthogonal
roots for the non-compact factor of $M$,
and $M$ is not simple precisely for $\fg\simeq\fsu(p, q), \fso^*(2n),
\fso(2, n)$. When $M/L$ is of tube type, the central character
$\zeta_0$ with the normalization $\zeta_0(\gamma_2^\vee)=1$
is precisely (see e.g. \cite{Sch84})
$$
\zeta_0=\frac 12 (\gamma_2+\cdots +\gamma_r).
$$

The highest and lowest roots  $\delta_1, \delta_0 $ in $V_1$
are not  linear combinations of $\gamma_2, \ldots, \gamma_r$
and involve other simple roots
(except for $\fsp(n, \mathbb R)$)
and their precise description depends on the root system.
It follows by definition that
$\delta_0$ is the lowest positive
non-compact root connected to $
\gamma_1$. Thus,
the Dynkin diagram for $\fm^{\mathbb C}$
is obtained from that of $\fg^{\mathbb C}$
by deleting the lowest root $\gamma_1$
and the corresponding simple compact root connected
to $\gamma_1$
and add $\gamma_2$
to the diagram. For the classical domains,
the roots  $\delta_1, \delta_0 $
can be found
directly without reference to the
root system of $\fg^{\mathbb C}$. However,
for the two exceptional domains
it is convenient to express
$\delta_1$ and $\delta_0$ in terms of simple roots
of  $\fg^{\mathbb C}$.

In the Jordan triple notation we have $\gamma_2^\vee =D(w, \bar w)$.
The action of $\gamma_2^\vee$
on $v_1\in V_1$ is $1$
which then also determines the value
of $\zeta_0$.
We write the joint Peirce decomposition for $e_1=e, e_2=w$
as $V=V_{11} \oplus  V_{22}
\oplus  V_{12} \oplus  V_{10} \oplus  V_{20} \oplus  V_{00}$ with
$V_{11}=\mathbb C e, V_{22}=\mathbb Cw$,
and $\dim V_{12}=a$.
This gives
the Peirce decomposition for $e$ as
$V=V_2\oplus  V_1 \oplus  V_0$ with
$$
V_{1}=V_{12} \oplus  V_{10}.
$$
Thus, $D(w, \bar w) =\gamma_2^\vee$
acts on $V_1$ as
$$
D(w, \bar w)|_{V_{12}} =1, \quad D(w, \bar w)|_{V_{10}} =0. 
$$
It follows that
$$
{\tr\ad_{V_1}
  (\gamma_2^\vee) 
  =a}.
$$
Since the central character $\zeta_0$ is normalized
by $\zeta_0(\gamma_2^\vee) =1$, we have
\begin{equation}
  \label{tr-ad-zeta}
{\tr\ad_{V_1}
  =a \zeta_0},  
\end{equation}
in the case where $\fm$ is irreducible. However, it follows
from the classification that $\fl$
always has one-dimensional
center, thus the above relation is true in all cases.
The central character is
found explicitly in terms of simple roots of $\fm^{\mathbb C}$
in \cite{EHW83} which we recall below.

\subsection{The case $\fg=\fso^\ast(2n)$}

Here $\fm=\fso^\ast(2(n-2)) +\fsu(2)$ is reducible, $\fl=\fu(n-2) \oplus \fsu(2)$, $\fk=\fu(n)$, $\fl_0=\fu(n-2) \oplus \fu(2)$ and $(a_1, b_1)= (2, n-4)$. The Lie algebra $\fso(2(n-2), {\mathbb C})$ has root system of 
type $D_{n-2}$.

The real rank of $\fg$ is  $r=[\frac {n}2]$. The positive roots and
non-compact positive roots are 
$$
\Delta^+=\{\e_j\pm \e_{i};  n \ge j>i\ge  1\}, \quad
\Delta^+_n=\{\e_i+\e_{j}; n\ge j>i\ge 1\}.
 $$
 Denote $m=n-2$, then
 $$
 \Delta^+(\fso(2m, \mathbb C))
 =\{\e_j\pm \e_{i};  n \ge j>i\ge  3\}, \quad
 \Delta^+_n(\fso(2m, \mathbb C))
  =\{\e_j+\e_{i}; n\ge j>i\ge 3\}.
  $$
It follows that 
 $$
 \delta_1= \e_{n}-\e_{n-1}, \e_{n-1}-\e_{n-2},\ldots,
 \e_4-\e_{3}, \e_4+\e_3
 $$
 form  a system of simple roots
 for $ \Delta^+(\fm^{\mathbb C})$
 and 
$$
\zeta_{\fu(m)}
=\frac 12(\e_3 + \cdots + \e_n)
$$
is the central character of the summand $\fu(m)$ of $\fl$.
(The correspondence
between Jordan triple notation
and the matrix notation
is $D(e_1, \bar e_1)= E_{11} +E_{22}$,
$D(e_2, \bar e_2)= E_{33} +E_{44},\ldots ,D(e_r, \bar e_r)=E_{2r-1,2r-1} + E_{2r,2r}$,
and it is of tube type if $n=2r$ is even. So the character
$\zeta_{\fu(m)}$
actually defines a character on the double covering of the usual
determinant representation of $\fu(m)$.)
The roots
of the summand $\fsl(2, \mathbb C)
=\fsu(2)^{\mathbb C}$
of $\fl^{\mathbb C}$
are $\{\pm \beta\}$, with
the Cartan $\ft_{\fsl(2, \mathbb C)}=\mathbb C( E_{11} -E_{22})$
and $\beta=\e_1+\e_2$
in the above notation.

We have $V=\bigwedge^2 \mathbb C^n$
is the space of skew-symmetric  matrices
as representation of $\upU(n)$
with
$\mathbb C^n$ being the defining
representation. Thus,  as matrices,
$V=V_2 \oplus  V_1 \oplus  V_0$
where $V_2=\mathbb C (E_{12}-E_{21})$,
$V_0=\bigwedge^2 \mathbb C^{m}$
and the space $V_1=
\mathbb C^{m} \otimes  
\mathbb C^2 
$  with  $\mathbb C^{m}$
and $\mathbb C^2$ 
 the defining 
 representation of $\fu(m)$ and $\fsu(2)$, respectively.
 The Cartan subalgebra is $\ft_{\fl}=\ft_{\fu(m)}+ \ft_{\fsu(2)}$,
 and 
$$
\delta_1=\e_n +\frac 12\beta, \quad
\delta_0=\e_3 -\frac 12\beta 
$$
are the highest and
lowest weights of $V_1$ as representation of $\fl$. 

The central character
$\tr \ad_{V_1}: X\mapsto \tr \ad(X)|_{V_1}$ of $\fl$ is 
$$
\tr \ad_{V_1}=
2(\e_3 + \cdots + \e_n) 
  =4 \zeta_{\fu(m)}
$$
since $\fsu(2)$  on $\mathbb C^2$
is traceless.  Note that
here $a=4$ so that the relation 
\eqref{tr-ad-zeta} still holds even
though $\fm$ is reducible, as mentioned above.

\subsection{The case $\fg=\fso(2, n)$}
Let $l=[\frac{n}{2}]$
and $\ft^{\mathbb C}
=\mathbb C \e^0 + \cdots+
\mathbb C \e^{l}$
with dual space
$(\ft^{\mathbb C})' =\mathbb C \e_0 + \cdots+
\mathbb C \e_{l}$ be the Cartan subspace
and its dual.
The root system of $\fso(2+n, \mathbb C)$
is,
$\Delta_c^+ =\{\e_j \pm \e_i; l\ge j >i \ge 1\}$ or 
$\Delta_c^+ =\{\e_j\pm \e_i; l\ge j > i\ge 1\}\cup 
\{\e_j; l\ge j\ge 1\}$, 
$\Delta_n^+ =\{\e_0 + \e_j; l\ge  j\ge 1\}$
or $\Delta_n^+ =\{\e_0 + \e_j; l \ge  j\ge 1\}\cup\{\e_0\}$,  
depending on $n$ being even or odd, respectively.
The root system of $\fm^{\mathbb C}=\fsl(2, \mathbb C) \oplus 
\fso(n-2, {\mathbb C})$
is
$\Delta_c^+(\fso(n-2, {\mathbb C})) 
=\{\e_j\pm \e_i; l\ge j > i\ge 2\}$ or 
$\Delta_c^+ (\fm^{\mathbb C}) =\{\e_j\pm \e_i; l\ge j >i\ge 2\}\cup 
\{\e_i; l\ge  j\ge 2\}$, 
$\Delta_n^+(\fsl(2, {\mathbb C}))  =\{\e_0 + \e_1\}$,
in the respective cases. In other words
we have $\beta =\e_0 + \e_1$ and in this notation $D(e_1, \bar e_1)$ is corresponding
to $\e_0 - \e_1$ and $D(e_2, \bar e_2)$  is corresponding
to $\beta= \e_0 + \e_1$.

\subsection{The case $\fg=\fe_{6(-14)}$}
\label{e6}
In this case
$\fm=\fsu(1, 5), \fl=\fu(5)$,
$\fk=\fso(2)\oplus \fspin(10), \fl_0=\fso(2) \oplus  \fu(5)$ and
$(a_1, b_1)= (4, 2)$.
The roots of $\fe_6$ are
\begin{align*}
	\Delta_c^+ &= \{\e_j\pm \e_i; 5\ge j>i\ge 1\},\\
	\Delta_n^+ &= \Big\{\dfrac 12 (\sum_{i=1}^5 (-1)^{\nu_i} \e_i -\e_6-\e_7 +\e_8); \text{$\sum_{i=1}^5 \nu_i $ is even}\Big\},
\end{align*}
with simple roots
$$
\eta_1=\dfrac 12 (
\e_1 -\e_2-\e_3-\e_4 -\e_5 
-\e_6 -\e_7 
+ \e_8 ), \quad \eta_2 =\e_1+\e_2,\quad \eta_j=\e_{j-1}-\e_{j-2},\, (3\le j\le 6), 
$$
and Harish-Chandra roots
$$\gamma_1=\eta_1, \qquad
\gamma_2 =
\dfrac 12 (-\e_1+\e_2+\e_3 +\e_4 -\e_5 
-\e_6 -\e_7+ \e_8).$$
The roots of $\fm^{\mathbb C}=\fsl(6, \mathbb C)$ are
$$
\Delta_c^+(\fm^{\mathbb C})=
\{\e_j- \e_i; 5\ge j>i\ge 
2; \e_i+ \e_1; 5\ge i\ge 1 \}, 
$$
and 
$$
\Delta_n^+
((\fm^{\mathbb C}))=\Big\{\dfrac 12 (\sum_{i=1}^5 
(-1)^{\nu_i} 
\e_i -\e_6-\e_7 +\e_8)
\in \Delta_n^+; 
\text{$(-1)^{\nu_1}-\sum_{i=2}^5 (-1)^{\nu_i} +3=0$}\Big\}. 
$$
We have
$
\{\e_5-\e_4, \e_4-\e_3, \e_3-\e_2, \e_2+\e_1\}$
forms a system of simple compact roots, 
and together with $\gamma_2$ we get
 a system of simple roots for $\fm^{\mathbb C}$.
(The discrepancy of the
root system of $\Delta_c^+(\fm^{\mathbb C})$
with  $\e_2+\e_1$ as a simple root
instead of   $\e_2-\e_1$ 
is due to our choice of $\gamma_1$;
in the standard notation and Dynkin diagram
for the root system of $\fe_6$, $\alpha_1$
is connected to the third nod $\e_2-\e_1$ so it
is deleted and the second nod $\e_2+\e_1$ stays,
as it is orthogonal to $\gamma_1$.)
The lowest weight $\delta_0$ in $V_1=\mathbb C^{10}$ is then
$$
\delta_0=\dfrac 12 (
-\e_1 + \e_2-\e_3-\e_4 -\e_5 
-\e_6 -\e_7 
+ \e_8 ).
$$
By computing the inner product with simple roots
in $\Delta_c^+(\fm^{\mathbb C})$, we see that
$V_1=\mathbb C^{10}=
\bigwedge^2\mathbb C^{5}
$, where $\mathbb C^5$ is
the defining representation of $\fl=\fu(5)$.

The central character $\tr \ad_{V_1}$  is easily found in terms of
simple roots, 
$$
\zeta_0=
  \frac 16 ( 5(\gamma_2)
+ 4(\e_5-\e_4) + 3( \e_4-\e_3) +2( \e_3-\e_2)+
(\e_2+\e_1)).
$$
and
$$
\tr \ad_{V_1} = 6 \zeta_0 =  5(\gamma_2)
+ 4(\e_5-\e_4) + 3( \e_4-\e_3) +2( \e_3-\e_2)+
(\e_2+\e_1)
$$
since $a=6$.

(In the standard notation of the root system of $\fsl(6, \mathbb C)
=\fsu(1,  5)^{\mathbb C}$ with Cartan subalgebra
in $\mathbb C^6$ of dimension $5$, the representation $V_1^{\mathbb C}
=\mathbb C^{20}=\bigwedge^3\mathbb C^6$
with highest weight
$$
\frac 12 (\e_6 + \e_5 + \e_4  -\e_3 - \e_2 - \e_1).
$$
Its branching under $\fsl(5, \mathbb C)$
is $ \bigwedge^2\mathbb C^5
\oplus\bigwedge^3\mathbb C^5
$.
The central character on $\fgl(5)$ on
$V_1=\bigwedge^2\mathbb C^5$ is
$$
\zeta_0=
\frac 16 (\e_6
+\e_5 + \e_4 +\e_3 +\e_2- 5\e_1)
$$
and the trace functional $\tr \ad_{V_1}$ is
$$
6\zeta_0
$$
where the Peirce decomposition
becomes $\bigwedge^2\mathbb C^5=
\bigwedge^2\mathbb C^4\oplus 
\mathbb C^4=\mathbb C^6 \oplus 
\mathbb C^4$.
Our  $D(e_2, \bar e_2)$-element corresponds to $-\e_1+\e_2$.)

\subsection{The case $\fg=\fe_{7(-25)}$}
Here $\fm=\fso(2, 10), \fl=\fso(2)\oplus \fso(10)$,
$\fk=\fso(2)\oplus \fe_6, \fl_0=\fso(2)\oplus \fso(2)\oplus \fspin(10)$ and
$(a_1, b_1)= (6, 4)$. The compact roots
are precisely all roots for $\fe_6$ in
the section \ref{e6}  above with non-compact roots
$$
\Delta_n^+=\{-\e_7+\e_8, \pm \e_i +\e_6; 1\le i\le 5 \}
\cup \Big\{
\dfrac 12(
\sum_{i=1}^5 (-1)^{\nu(i)} \e_i
+\e_6-\e_7 +\e_8
); \text{$\sum_{i=1}^5(-1)^{\nu(i)}$ is odd}\Big\}.
$$
Choose $$
\gamma_1
=\alpha_7=\e_6-\e_5,\qquad
\gamma_2=\e_6+\e_5,\qquad
\gamma_3=\beta 
=\e_8-\e_7
$$
and
\begin{align*}
	\Delta_c^+(\fm^{\mathbb C}) &= \{\e_j\pm \e_i; 5\ge j>i\ge 1\}, \\
	\Delta_n^+((\fm^{\mathbb C})) &= \Big\{\dfrac 12 (\sum_{i=1}^5 (-1)^{\nu_i} \e_i -\e_6-\e_7 +\e_8)\in\Delta_n^+; \text{$\sum_{i=1}^4(-1)^{\nu_i} - (-1)^{\nu_5}+1=0$   }\Big\}.
\end{align*}

The lowest weight in $V_1$ is
$$
\delta_0=\dfrac 12 (
-\e_1 - \e_2-\e_3-\e_4 -\e_5 
+\e_6 -\e_7 
+ \e_8 ) +
\frac 12(\gamma_2 +\gamma_3)
$$
and $V_1=\mathbb C^{16}\otimes \mathbb C$, with
$\mathbb C^{16}$ the spin representation
of $\fso(10)$ and $\mathbb C$ the
representation of $\fso(2)$ with character
$\frac 12(\gamma_2 +\gamma_3)$.

The central characters
are
$$
\zeta_0 = \frac 12(\gamma_2 +\gamma_3) \qquad \mbox{and} \qquad
\tr \ad_{V_1} = 8\zeta_0,
$$
since $a=8$.

\section{Explicit decomposition of the metaplectic representation}\label{app:explicit_decomp}

For the two classical cases $\frakg=\fso(2,n)$ and $\fso^*(2n)$ we make the decomposition~\eqref{m-deco} more explicit and relate it to dual pair correspondences in the literature.

\subsection{The case $G=SO^\ast(2n)$}
Here $M=\SO^\ast(2(n-2))\times\SU(2)$, 
$\fm=\fso^\ast(2(n-2)) \oplus \fsu(2), \fl=\fu(n-2) \oplus \fsu(2)$.
In this case, Theorem~\ref{thm:M_decomposition} becomes a special case of
the dual pair correspondence (see e.g. \cite{EHW83}):

\begin{prop+} \label{prop:MetaplecticRestrictionMSO*(2n)}
	For $\lambda>0$ we have
	$$
	\omega_{\met,\lambda}|_M=
	\bigoplus_{k=0}^\infty
	\tau_{-k(\delta_0 + \frac 12 \beta) -2\zeta_0}
	=\bigoplus_{k=0}^\infty
	\tau_{-k\delta_0  -2\zeta_0}^{SO^\ast(2(n-2) )}
	\otimes
	S^{k}(\mathbb C^2),
	$$
	where
	$S^{k}(\mathbb C^2)$ is the $k$-th
	symmetric power of the standard representation $\CC^2$
	of $\SU(2)$. 
\end{prop+}

\subsection{The case $G=\SO(2,n)$, $n>4$}

This is somewhat the easiest case. Due to the low-dimensional isomorphisms $\fso(2,2)\simeq\fso(2,1)\oplus\fso(2,1)$, $\fso(2,3)\simeq\sp(2,\RR)$ and $\fso(2, 4)\simeq
\fsu(2, 2)$, we shall assume $n>4$.

Let $V=\mathbb C^n$ be the Type IV Jordan triple
with the corresponding Hermitian symmetric pair $
(\fg, \fk)=
(
\fso(2, n), \fso(2)\oplus\fso(n))$. Let $\{e=e_1, e_2\}$
be a Jordan frame. We have
$\fm=\fsl(2, \mathbb R) \oplus\fso(n-2) $ with
$$\fsl(2, \mathbb R) 
=\mathbb R\xi_{ie_2}
+
\mathbb R\xi_{e_2}
+\mathbb R i D(e_2, \bar e_2),
$$
and 
$$\fso(n-2) 
=\{X\in \fk; Xe=0\}
=\fso(n-2)\subset \fso(n) \subset
\fk=
\fso(2)\oplus \fso(n);
$$
the space $V_1(e)=V_{12}=\mathbb C^{n-2}=\mathbb R^2\otimes \mathbb R^{n-2}$
with
the defining
action of $\fm=\fsl(2, \mathbb R) +\fso(n-2) $.
The space $V_1$ is itself a Jordan algebra of rank two. 
Let $v_1, v_2\in V_1$ be a frame of tripotents and for any $z\in V_1$
let $z_1=\langle z, v_1\rangle,
z_2=\langle z, v_2\rangle$
be the coordinates of $z$
with respect to $v_1, v_2$. The roots
of $ \fsl(2, \mathbb C) =\fsl(2, \mathbb R)^{\mathbb C}$
with respect to $D(e_2, \bar e_2)$ are $\{\pm  \beta\}$
with $\beta(D(e_2, \bar e_2))=2$. 
Note that $D(e_2, \bar e_2)v_1=1$.
Thus  $V_1=\mathbb C^{n-2}$ is 
the  representation 
$V_1=\mathbb C\otimes \mathbb C^{n-2}$ 
with $D(e_2, \bar e_2)$ acting as $\frac 12 \beta$
and $\fso(n-2, \mathbb C)$ acting as the
defining representation $\mathbb C^{n-2}$. 

The space $V_1$  has highest weight $\delta_1:=\frac 12 \beta + \epsilon_l$ and lowest weight $\delta_0:=-\frac 12 \beta + \epsilon_2$.

The vector  $v_1 -i v_2\in V_1$ is a highest weight vector in $V_1$.
Let
$z_1=\langle z, v_1\rangle,
z_2=\langle z, v_2\rangle$. The
space of harmonic polynomials of degree $k$ on
$\mathbb C^{n-2}$ has then highest weight
vector $(z_1+iz_2)^k$.

The decomposition of the metaplectic representation $\omega_{\met,\lambda}$ of $\Sp(n-2, \mathbb R))$ under
$M=\SL(2, \mathbb R) \times \SO(n-2) =
\SL(2, \mathbb R)
\times  
\upO(n-2)$
is the dual pair correspondence \cite{KV78}.
For completeness we present
a sketch of the proof following our computations.

\begin{prop+}\label{prop:MetaplecticRestrictionMSO(2,n)}
	For $\lambda>0$ we have
	$$
	\omega_{\met,\lambda}|_M= \bigoplus_{k=0}^\infty \tau_{
		-\frac 12 k\beta -\frac 12(n-2)\beta}^{\SL(2, \mathbb R)} \otimes \calH^k(\CC^{n-2}),
	$$
	where $\calH^k(\CC^{n-2})$ is the representation of $\upO(n-2)$ on
	the space of spherical harmonics on $V_1=\mathbb C^{n-2}$ of degree $k$.
	The highest weight vectors are $f(z)=(z_1+iz_2)^k
	$ in the usual coordinates
	of $V_1=\mathbb C^{n-2}$; the quadratic equation $Q(\partial)e f=0$
	is precisely the spherical harmonic condition.
\end{prop+}

\begin{proof} The space of all polynomials
	on $\mathbb C^n$ is decomposed as
	$$
	\mathcal I
	\otimes \calH
	$$
	where $\mathcal I$ is the space of $\fso(n-2)$-invariants and $\calH$ the space of harmonic polynomials. $\mathcal I$ is generated by the unique quadratic polynomial $(z,z)$
	obtained from the Jordan product $Q(z)\bar e =(z, z)e_2 $ on $\mathbb
	C^{n-2}$.
	The $\fso(n-2, \mathbb C)$-highest weight vectors
	are of the form  $(z_1+iz_2)^k Q(z)^j$. It
	follows from the same general considerations above
	that $(z_1+iz_2)^k
	(z, z)^j$ is a highest
	vector for $\fsl(2, \mathbb C)=
	\fsu(1, 1)^{\mathbb C}$
	precisely when $j=0$. The corresponding highest
	weight of $ (z_1+iz_2)^k$ for
	$D(e_2, \bar e_2)\in \fsl(2, \mathbb C)$ is $-\frac  12 \tr  (D(e_2,
	\bar e_2)|_{V_1}) -k = -(n-2) -k$, i.e.
	it is $-\frac 12 (n+k-2)\beta$.
\end{proof}

\section{Symplectic summation formulas}

In this section we prove some summation formulas involving the symplectic invariants $\mu$, $\Psi$ and $\Omega$ as well as their symmetrizations $B_\mu$, $B_\Psi$ and $B_\Omega$. We use the abbreviation $Tx=[T,x]$ for $T\in\frakm$ and $x\in V_1$. Then the following identities hold for $x,y,z\in V_1$ and $a,a',b,b'\in\RR$ (see e.g. \cite[Theorem 2.16 and Corollary 4.2]{SS15}):
\begin{align}
	B_\mu(ax+b\Psi(x),a'x+b'\Psi(x)) &= (aa'-bb'\Omega(x))\mu(x),\label{eq:BmuOnXPsiX}\\
	B_\mu(x,y)z - B_\mu(x,z)y &= \frac{1}{4}\omega(x,y)z-\frac{1}{4}\omega(x,z)y-\frac{1}{2}\omega(y,z)x,\label{eq:IdentityMuTau}\\
	\mu(x)\Psi(x) &= -3\Omega(x)x.\label{eq:MuPsi}
\end{align}

\begin{lemma}\label{SSF:lemma1}
	For $x,v\in V_1$:
	$$ [\mu(x),B_\mu(x,v)]x = -2B_\mu(\Psi(x),v)x+\frac{3}{2}\omega(x,v)\Psi(x). $$
\end{lemma}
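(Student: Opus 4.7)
The strategy is to reduce the stated identity to a single quadratic relation for $\mu(x)^2$ acting on $V_1$. First I would record the polarized $\mathfrak{m}$-equivariance relation
\begin{equation*}
[T, B_\mu(v,w)] = B_\mu(Tv, w) + B_\mu(v, Tw) \qquad (T\in\mathfrak{m},\ v,w\in V_1),
\end{equation*}
obtained by differentiating the $M$-equivariance $\mu(\exp(sT)v) = \Ad(\exp(sT))\mu(v)$ at $s=0$ to get $[T,\mu(v)] = 2 B_\mu(v, Tv)$, and then polarizing in $v$. That $\mu(x)\in\mathfrak{m}$ (so $T = \mu(x)$ is admissible) follows from Proposition~\ref{prep-1-3} via the Peirce structure, or alternatively from the short Jacobi computation $[\mu(x), E]=0$.

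Applying this with $v=x$ and $w=v$, using the basic identity $\mu(x)\,x = -3\,\Psi(x)$ (immediate from $6\,\bar{n}_{\Psi(x)} = \operatorname{ad}(\bar{n}_x)^3 E = -2\,\bar{n}_{\mu(x)x}$), and evaluating at $x$, gives
\begin{equation*}
[\mu(x), B_\mu(x, v)]\,x = -3\,B_\mu(\Psi(x), v)\,x + B_\mu(x, \mu(x) v)\,x.
\end{equation*}
Both remaining $B_\mu$-terms are then expanded using the polarized identity \eqref{eq:IdentityMuTau}. With $(y,z) = (x, \mu(x)v)$ one gets $B_\mu(x, \mu(x) v)\,x = \mu(x)^2 v + \tfrac{9}{4}\omega(\Psi(x), v)\,x$, where I use $\omega(x, \mu(x) v) = 3\omega(\Psi(x), v)$, a consequence of $\mu(x)\in\mathfrak{sp}(V_1,\omega)$ combined with $\mu(x) x = -3\Psi(x)$. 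Applying \eqref{eq:IdentityMuTau} with $x$ replaced by $\Psi(x)$ and $(y, z) = (v, x)$, together with $B_\mu(\Psi(x), x) = 0$ from \eqref{eq:BmuOnXPsiX} and $\omega(x,\Psi(x)) = 4\Omega(x)$ (which follows from $\operatorname{ad}(\bar{n}_x)^4 E = 24\,\Omega(x)\,F$ and Proposition~\ref{prep-1-1}), yields $B_\mu(\Psi(x), v)\,x = \tfrac{1}{4}\omega(\Psi(x), v)\,x + \Omega(x)\,v + \tfrac{1}{2}\omega(x, v)\,\Psi(x)$.

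After substitution and simplification, the lemma reduces to the structural identity
\begin{equation*}
\mu(x)^2 v = \Omega(x)\,v + 2\omega(x, v)\,\Psi(x) - 2\omega(\Psi(x), v)\,x,
\end{equation*}
which is the heart of the argument and the main obstacle. This is a fundamental identity of the symplectic triple system structure on $V_1$ studied in \cite{SS15}, expressing that $\mu(x)^2 - \Omega(x)\,\mathrm{id}$ has image contained in $\linspan_\RR\{x, \Psi(x)\}$. I would prove it by direct computation from the explicit Jordan-triple formula $\mu(x) v = i(|x|^2 v - 2 D(x, \bar{x}) v) + i D(e, \bar{v}) D(x, \bar{e}) x$ of Proposition~\ref{prep-1-6}, composing this expression with itself and simplifying via the Peirce rules together with the Jordan identities in \cite{Loo77}. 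Useful consistency checks are the special cases $v = x$ (both sides equal $9\Omega(x)\,x$) and $v = \Psi(x)$ (both sides equal $9\Omega(x)\,\Psi(x)$), as well as the fact that $\omega(\mu(x)^2\,\cdot,\,\cdot)$ and $\omega$ applied to the right-hand side of the identity are both antisymmetric.
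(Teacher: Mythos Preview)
Your reduction is algebraically correct: the equivariance step, the two applications of \eqref{eq:IdentityMuTau}, and the final simplification all check out, and the lemma is indeed equivalent to the identity
\[
\mu(x)^2 v = \Omega(x)\,v + 2\omega(x,v)\Psi(x) - 2\omega(\Psi(x),v)\,x.
\]
The gap is that you stop exactly at this point. You call this identity ``the heart of the argument and the main obstacle'' and then defer it to an unspecified Jordan-triple computation from Proposition~\ref{prep-1-6}. That computation is not carried out, and it is substantially harder than the lemma you are trying to prove. In the paper's logical order this identity is Proposition~\ref{SSF:proposition2}, and it is \emph{deduced from} Lemma~\ref{SSF:lemma1}, not conversely; so invoking it here would be circular within the paper, and proving it independently would require work you have not supplied.

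The paper avoids the $\mu(x)^2$ identity entirely by a short trick. After the same first step
\[
[\mu(x),B_\mu(x,v)]x = -3B_\mu(\Psi(x),v)x + B_\mu(x,\mu(x)v)x,
\]
instead of pushing $B_\mu(x,\mu(x)v)x$ toward $\mu(x)^2 v$ via \eqref{eq:IdentityMuTau}, one first uses \eqref{eq:IdentityMuTau} in the form $\mu(x)v = B_\mu(x,v)x - \tfrac{3}{4}\omega(x,v)x$ to obtain $B_\mu(x,\mu(x)v)x = B_\mu(x,B_\mu(x,v)x)x + \tfrac{9}{4}\omega(x,v)\Psi(x)$, and then applies the equivariance relation once more with $T=B_\mu(x,v)$ acting on $\mu(x)=B_\mu(x,x)$, giving $B_\mu(x,B_\mu(x,v)x)x = \tfrac{1}{2}[B_\mu(x,v),\mu(x)]x$. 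This feeds the \emph{same} commutator back into the right-hand side with coefficient $-\tfrac{1}{2}$; collecting it yields the lemma directly, with no appeal to the quadratic identity for $\mu(x)^2$. Your route reverses the logical flow and trades a three-line argument for an unperformed computation.
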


\begin{proof}
	By the $\frakm$-equivariance of $B_\mu$:
	\begin{align*}
		[\mu(x),B_\mu(x,v)]x &= B_\mu(\mu(x)x,v)x+B_\mu(x,\mu(x)v)x,\\
		\intertext{and by the definition of $\Psi(x)$ and \eqref{eq:IdentityMuTau}}
		&= -3B_\mu(\Psi(x),v)x+B_\mu\left(x,B_\mu(x,v)x-\frac{3}{4}\omega(x,v)x\right)x\\
		&= -3B_\mu(\Psi(x),v)x+B_\mu(x,B_\mu(x,v)x)x-\frac{3}{4}\omega(x,v)\mu(x)x.
		\intertext{Using the $\frakm$-equivariance of $B_\mu$ on the second terms and the definition of $\Psi$ on the third one,we find}
		&= -3B_\mu(\Psi(x),v)x+\frac{1}{2}[B_\mu(x,v),\mu(x)]x+\frac{9}{4}\omega(x,v)\Psi(x).
	\end{align*}
	Isolating $[\mu(x),B_\mu(x,v)]x$ in the resulting equation shows the claimed formula.
\end{proof}

\begin{proposition}\label{SSF:proposition2}
	For $x,v\in V_1$, the following identity holds:
	$$ \mu(x)^2v = 2\omega(x,v)\Psi(x)-2\omega(\Psi(x),v)x + \Omega(x)v. $$
\end{proposition}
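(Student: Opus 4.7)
My plan is to reduce $\mu(x)^2v$ by iterating the symmetric-polarization identity~\eqref{eq:IdentityMuTau} and feeding the result through Lemma~\ref{SSF:lemma1}, while exploiting the crucial vanishings $B_\mu(x,\Psi(x))=B_\mu(\Psi(x),x)=0$, which come from~\eqref{eq:BmuOnXPsiX} with $a=1, b=0, a'=0, b'=1$. As the starting step, specializing~\eqref{eq:IdentityMuTau} at $(y,z)=(x,v)$ and using $B_\mu(x,x)=\mu(x)$, $\omega(x,x)=0$ gives
$$ \mu(x)v = B_\mu(x,v)x - \tfrac{3}{4}\omega(x,v)\,x, $$
so that
$$ \mu(x)^2 v = \mu(x)\bigl(B_\mu(x,v)x\bigr) - \tfrac{3}{4}\omega(x,v)\,\mu(x)x. $$

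The value $\mu(x)x = -3\Psi(x)$ is immediate from the definitions $\mu(v)=\tfrac{1}{2!}\ad(\bar n_v)^2E$ and $\Psi(v)=\tfrac{1}{3!}\ad(\bar n_v)^3E$, since they give $[\bar n_x,\mu(x)]=3\Psi(x)$ and translating to the induced $\frakm$-action on $V_1\simeq\bar\frakn_1$ inverts the sign. Lemma~\ref{SSF:lemma1} then expresses the remaining term as
$$ \mu(x)\bigl(B_\mu(x,v)x\bigr) = B_\mu(x,v)\mu(x)x + [\mu(x),B_\mu(x,v)]x = -3B_\mu(x,v)\Psi(x) - 2B_\mu(\Psi(x),v)x + \tfrac{3}{2}\omega(x,v)\Psi(x), $$
reducing the problem to linearizing $B_\mu(x,v)\Psi(x)$ and $B_\mu(\Psi(x),v)x$.

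This linearization is the second use of~\eqref{eq:IdentityMuTau}. Taking $(y,z)=(v,\Psi(x))$, and using $B_\mu(x,\Psi(x))=0$ and $\omega(x,\Psi(x))=4\Omega(x)$, expresses $B_\mu(x,v)\Psi(x)$ as a linear combination of $x$, $v$, $\Psi(x)$; replacing $x$ by $\Psi(x)$ in the first slot with $(y,z)=(v,x)$ and using $B_\mu(\Psi(x),x)=0$ does the same for $B_\mu(\Psi(x),v)x$. Substituting both into the expression above and collecting coefficients of $\Psi(x)$, $x$, and $v$ is a short arithmetic that produces exactly $2\omega(x,v)\Psi(x) - 2\omega(\Psi(x),v)x + \Omega(x)v$.

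The only genuine obstacle is the coefficient bookkeeping across the three applications of~\eqref{eq:IdentityMuTau}; each one introduces fractions of denominator $4$, and what makes them collapse to the clean coefficients $2,-2,1$ is precisely the double vanishing $B_\mu(x,\Psi(x))=B_\mu(\Psi(x),x)=0$, which prevents the iteration from generating any fresh $B_\mu$ terms. As a built-in sanity check, the specialization $v=x$ forces both sides to equal $9\Omega(x)\,x$ — the right-hand side via $\omega(x,\Psi(x))=4\Omega(x)$, the left-hand side via $\mu(x)x=-3\Psi(x)$ and~\eqref{eq:MuPsi}.
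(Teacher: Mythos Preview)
Your proposal is correct and follows essentially the same route as the paper: start from $\mu(x)v=B_\mu(x,v)x-\tfrac{3}{4}\omega(x,v)x$ via \eqref{eq:IdentityMuTau}, use $\mu(x)x=-3\Psi(x)$ and Lemma~\ref{SSF:lemma1} to reduce $\mu(x)B_\mu(x,v)x$, then apply \eqref{eq:IdentityMuTau} twice more together with $B_\mu(x,\Psi(x))=B_\mu(\Psi(x),x)=0$ from \eqref{eq:BmuOnXPsiX} to linearize the remaining $B_\mu$ terms. The paper carries out exactly this computation, writing out the final substitution explicitly rather than leaving the coefficient bookkeeping to the reader.
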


\begin{proof}
	By \eqref{eq:IdentityMuTau} and the definition of $\Psi$ we have
	\begin{align*}
		\mu(x)^2v &= \mu(x)B_\mu(x,x)v\\
		&= \mu(x)\left(B_\mu(x,v)x-\frac{3}{4}\omega(x,v)x\right)\\
		&= \mu(x)B_\mu(x,v)x+\frac{9}{4}\omega(x,v)\Psi(x).\\
		\intertext{Using Lemma~\ref{SSF:lemma1}, this can be rewritten as}
		&= [\mu(x),B_\mu(x,v)]x + B_\mu(x,v)\mu(x)x + \frac{9}{4}\omega(x,v)\Psi(x)\\
		&= -2B_\mu(\Psi(x),v)x + \frac{3}{2}\omega(x,v)\Psi(x) - 3B_\mu(x,v)\Psi(x) + \frac{9}{4}\omega(x,v)\Psi(x)\\
		&= -2B_\mu(\Psi(x),v)x - 3B_\mu(x,v)\Psi(x) + \frac{15}{4}\omega(x,v)\Psi(x).\\
		\intertext{Applying \eqref{eq:IdentityMuTau} to the first two terms and using that $B_\mu(x,\Psi(x))=0$ by \eqref{eq:BmuOnXPsiX} gives:}
		&= -2\left(\frac{1}{4}\omega(\Psi(x),v)x-\frac{1}{4}\omega(\Psi(x),x)v-\frac{1}{2}\omega(v,x)\Psi(x)\right)\\
		&\qquad -3\left(\frac{1}{4}\omega(x,v)\Psi(x)-\frac{1}{4}\omega(x,\Psi(x))v-\frac{1}{2}\omega(v,\Psi(x))x\right)\\
		&\qquad + \frac{15}{4}\omega(x,v)\Psi(x).
	\end{align*}
	Collecting the various terms and using the definition of $\Omega$ shows the claimed identity.
\end{proof}

\begin{lemma}\label{SSF:lemma2}
	For $x\in V_1$:
	$$ \tr\mu(x)^2 = 2(d_1+8)\Omega(x). $$
\end{lemma}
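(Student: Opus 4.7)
The plan is to evaluate the trace directly using the symplectic dual basis. Recall that $\{e_\alpha\}$ is a real basis of $V_1$ with $\omega$-dual basis $\{\hat{e}_\alpha\}$ satisfying $\omega(e_\alpha,\hat{e}_\beta)=\delta_{\alpha\beta}$. A short computation shows that any $v\in V_1$ admits the expansion $v=\sum_\alpha \omega(v,\hat{e}_\alpha)e_\alpha$, which gives the completeness relation
$$ \sum_\alpha \omega(u,e_\alpha)\omega(v,\hat{e}_\alpha) = \omega(u,v) \qquad (u,v\in V_1) $$
and the trace formula $\tr T = \sum_\alpha \omega(Te_\alpha,\hat{e}_\alpha)$ for any linear operator $T$ on $V_1$.

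Applying the trace formula to $T=\mu(x)^2$ and plugging in Proposition~\ref{SSF:proposition2}, I would obtain
$$ \tr\mu(x)^2 = \sum_\alpha\bigl[ 2\omega(x,e_\alpha)\omega(\Psi(x),\hat{e}_\alpha) - 2\omega(\Psi(x),e_\alpha)\omega(x,\hat{e}_\alpha) + \Omega(x)\,\omega(e_\alpha,\hat{e}_\alpha) \bigr]. $$
The first two sums each collapse to $2\omega(x,\Psi(x))$ by the completeness relation, while the third one yields $\dim_\RR(V_1)\,\Omega(x) = 2d_1\Omega(x)$.

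To finish, I would invoke the identity $\omega(x,\Psi(x))=4\Omega(x)$, which is the penultimate step in the proof of Proposition~\ref{prep-1}(v). This gives
$$ \tr\mu(x)^2 = 8\Omega(x) + 8\Omega(x) + 2d_1\Omega(x) = 2(d_1+8)\Omega(x), $$
as desired. There is no real obstacle here: the argument is purely bookkeeping, and the only points requiring care are the sign in the completeness relation (fixed by the convention $\omega(e_\alpha,\hat{e}_\beta)=\delta_{\alpha\beta}$) and the fact that $\dim_\RR V_1 = 2d_1$, which accounts for the precise constant $2(d_1+8)$ instead of, say, $(d_1+16)$.
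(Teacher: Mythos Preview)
Your proof is correct and is exactly the approach the paper takes: the paper's proof reads in full ``This follows directly from Proposition~\ref{SSF:proposition2},'' and you have simply written out the trace computation that this sentence abbreviates.
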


\begin{proof}
	This follows directly from Proposition~\ref{SSF:proposition2}.
\end{proof}

Now let $(e_\alpha)_\alpha\subseteq V_1$ be a basis and $(\widehat{e}_\alpha)_\alpha$ the dual basis with respect to the symplectic form, i.e. $\omega(e_\alpha,\widehat{e}_\beta)=\delta_{\alpha\beta}$.

\begin{lemma}
	For $x,y\in V_1$ we have
	$$ \sum_\alpha\tr(B_\mu(x,e_\alpha)\circ B_\mu(\widehat{e}_\alpha,y)) = \sigma\cdot\omega(x,y), $$
	where
	$$ \sigma = \begin{cases}-\frac{1}{24}(2d_1+1)(d_1+8)&\mbox{if $\frakm$ is simple (i.e. $\frakg^\CC$ is not of type $A$, $B$ or $D$),}\\-\frac{1}{8}(5n-6)&\mbox{if $\frakg^\CC\simeq\sl(n,\CC)$,}\\-\frac{1}{8}(11n-52)&\mbox{if $\frakg^\CC\simeq\so(n,\CC)$.}\end{cases} $$
\end{lemma}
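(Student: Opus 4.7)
The plan is to first show that the left-hand side defines an $\frakm$-invariant antisymmetric bilinear form on $V_1$, conclude by a Schur-type argument that it must be a scalar multiple $\sigma$ of $\omega$, and then determine $\sigma$ in each case by an explicit trace calculation based on the identity \eqref{eq:IdentityMuTau}.

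First I would observe that the element $\sum_\alpha e_\alpha\otimes\widehat{e}_\alpha\in V_1\otimes V_1$ does not depend on the choice of basis and is $\sp(V_1,\omega)$-invariant, since under the identification $V_1\otimes V_1\simeq(V_1^*\otimes V_1^*)^*$ it represents the inverse of $\omega$. Pushing it forward through the $\frakm$-equivariant symmetric bilinear map $B_\mu:V_1\times V_1\to\frakm$ in each factor and composing with the trace pairing on $\End(V_1)$ yields an $\frakm$-invariant bilinear form
$$ \Phi(x,y) := \sum_\alpha \tr_{V_1}\bigl(B_\mu(x,e_\alpha)\circ B_\mu(\widehat{e}_\alpha,y)\bigr). $$
Cyclicity of the trace combined with the symmetry $B_\mu(u,v)=B_\mu(v,u)$ and the basis change $(e_\alpha,\widehat{e}_\alpha)\mapsto(\widehat{e}_\alpha,-e_\alpha)$ (another symplectic dual pair) then gives $\Phi(y,x)=-\Phi(x,y)$.

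Next I would invoke Schur's lemma. When $\frakm$ is simple, $V_1$ is an irreducible $\frakm$-module, so the space of $\frakm$-invariant antisymmetric bilinear forms on $V_1$ is one-dimensional and contains $\omega$. In the remaining cases $\frakg^\CC\simeq\sl(n,\CC)$ and $\so(n,\CC)$, the module $V_1=V_1^+\oplus V_1^-$ splits into two inequivalent irreducible pieces paired non-degenerately by $\omega$; a block analysis of $V_1\otimes V_1$ together with antisymmetry again forces any such invariant form to be a single scalar multiple of $\omega$. Consequently $\Phi=\sigma\,\omega$ in all cases.

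To evaluate $\sigma$ I would apply the operator $A(x,y):=\sum_\alpha B_\mu(x,e_\alpha)\,B_\mu(\widehat{e}_\alpha,y)\in\End(V_1)$ to a test vector $z$ and use \eqref{eq:IdentityMuTau} on the inner $B_\mu(\widehat{e}_\alpha,y)z$. This yields the reduction identity
$$ A(x,y)z = A(x,z)y - \tfrac{1}{4}B_\mu(x,y)z + \tfrac{1}{4}B_\mu(x,z)y + \tfrac{d_1+1}{8}\omega(y,z)\,x, $$
where the final term exploits the preliminary identity $\sum_\alpha B_\mu(x,e_\alpha)\widehat{e}_\alpha = -\tfrac{d_1+1}{4}x$, itself derived from \eqref{eq:IdentityMuTau} together with basis independence. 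Taking the trace in $z$ then converts the right-hand side into a combination of simpler invariants: the contribution of $\tr B_\mu(x,y)$ vanishes because $B_\mu(x,y)\in\frakm\subset\sp(V_1,\omega)$, the contact term proportional to $\omega(y,z)x$ contributes a multiple of $\omega(x,y)$, and the factor $(d_1+8)$ appearing in $\sigma$ for the simple case can be traced back to Lemma~\ref{SSF:lemma2}. The leftover piece requires one auxiliary evaluation that can be handled either by a second application of \eqref{eq:IdentityMuTau} or by testing the identity on a highest-weight-vector pair $(v,\overline{v})$ where many terms vanish by weight considerations.

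The hardest part will be the case-by-case bookkeeping, especially in the reducible cases $\frakg^\CC\simeq\sl(n,\CC)$ and $\so(n,\CC)$. There the sum in $\Phi(x,y)$ involves only the mixed blocks $V_1^+\times V_1^-$ and $V_1^-\times V_1^+$, and the trace calculations on each component must be carried out using the smaller Jordan structure constants of $V_1^\pm$ separately and then recombined to produce the linear-in-$n$ expressions $-\tfrac{1}{8}(5n-6)$ and $-\tfrac{1}{8}(11n-52)$. Producing these closed forms is straightforward but tedious, as the coefficients depend on the detailed decomposition of $V_1\otimes V_1$ under $\frakm$.
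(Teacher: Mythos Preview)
Your Schur-type reduction to $\Phi=\sigma\omega$ is sound, but the evaluation of $\sigma$ is where your sketch diverges from the paper and where it has gaps. The paper does not take traces of $A(x,y)$ acting on $V_1$; instead, when $\frakm$ is simple it quotes from \cite{Fra22} the proportionality of invariant bilinear forms on $\frakm$ itself, namely $\tr_{V_1}(B_\mu(x,y)\circ B_\mu(z,w))=\mathcal{C}\,\omega(B_\mu(x,y)z,w)$ with the explicit constant $\mathcal{C}=\tfrac{d_1+8}{6}$. This collapses the sum immediately to $\mathcal{C}\sum_\alpha\omega(B_\mu(x,e_\alpha)\widehat{e}_\alpha,y)$, and then a single application of \eqref{eq:IdentityMuTau} together with $\sum_\alpha B_\mu(e_\alpha,\widehat{e}_\alpha)=0$ gives $\sum_\alpha B_\mu(x,e_\alpha)\widehat{e}_\alpha=-\tfrac{2d_1+1}{4}x$ and hence $\sigma=-\tfrac{(2d_1+1)(d_1+8)}{24}$. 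For types $A$, $B$, $D$ the paper simply checks the formula by hand using explicit matrix realizations.

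Two concrete issues with your proposal: first, your preliminary identity has the wrong constant---the correct value is $\sum_\alpha B_\mu(x,e_\alpha)\widehat{e}_\alpha=-\tfrac{2d_1+1}{4}x$, not $-\tfrac{d_1+1}{4}x$ (the real dimension of $V_1$ is $2d_1$). Second, your claim that $V_1$ splits into two irreducible pieces in the $\so(n,\CC)$ case is false: for $\frakg\simeq\so^*(2n)$ or $\so(2,n)$ one has $V_1\simeq\CC^2\otimes\CC^{n-2}$ (resp.\ $\RR^2\otimes\RR^{n-2}$) as a tensor product of irreducibles for the two factors of $\frakm$, hence still irreducible. This does not break your Schur argument---it actually makes it easier---but it does mean the ``block analysis'' you outline is misdirected. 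The genuine obstruction in the non-simple cases is that the single constant $\mathcal{C}$ becomes a pair $\mathcal{C}(\frakm_1),\mathcal{C}(\frakm_2)$ for the two simple factors, and these must be computed separately; your proposed trace reduction does not obviously bypass this, and you have not indicated how to close it.
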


\begin{proof}
	If $\frakm$ is simple, we have $\tr(B_\mu(x,y)\circ B_\mu(z,w))=\mathcal{C}\omega(B_\mu(x,y)z,w)$ with $\mathcal{C}=\frac{d_1+8}{6}$ by \cite[Corollary 2.4.6 and Lemma 5.3.3]{Fra22}. Combining this with \cite[Lemma 2.4.3]{Fra22} we find
	\begin{align*}
		& \sum_\alpha\tr(B_\mu(x,e_\alpha)\circ B_\mu(\widehat{e}_\alpha,y) = \mathcal{C}\sum_\alpha\omega(B_\mu(e_\alpha,x)\widehat{e}_\alpha,w)\\
		&= \mathcal{C}\sum_\alpha\Bigg(\omega(B_\mu(e_\alpha,\widehat{e}_\alpha)x,y)+\frac{1}{4}\omega(e_\alpha,x)\omega(\widehat{e}_\alpha,y)-\frac{1}{4}\omega(e_\alpha,\widehat{e}_\alpha)\omega(x,y)-\frac{1}{2}\omega(x,\widehat{e}_\alpha)\omega(e_\alpha,y)\Bigg)\\
		&= -\frac{2d_1+1}{4}\mathcal{C}\cdot\omega(x,y)
	\end{align*}
	where we have used $\sum_\alpha B_\mu(e_\alpha,\widehat{e}_\alpha)=0$. The formulas for type $A$, $B$ and $D$ are checked by direct computation, using e.g. the explicit formulas in \cite[Appendix B]{Fra22}.
\end{proof}

\begin{corollary}\label{SSF:corollary}
	The following identities hold:
	\begin{align*}
		\sum_\alpha\tr(B_\mu(x,e_\alpha)\circ B_\mu(\widehat{e}_\alpha,\Psi(x))) &= 4\sigma\Omega(x) && (x\in V_1),\\
		\sum_{\alpha,\beta}\tr(B_\mu(e_\alpha,e_\beta)\circ B_\mu(\widehat{e}_\alpha,\widehat{e}_\beta)) &=2 \sigma\cdot d_1.
	\end{align*}
\end{corollary}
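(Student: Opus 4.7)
Both identities will be obtained as direct specializations of the preceding lemma, which gives $\sum_\alpha\tr(B_\mu(x,e_\alpha)\circ B_\mu(\widehat{e}_\alpha,y))=\sigma\cdot\omega(x,y)$, together with elementary facts about $\Psi$, $\Omega$ and the dual basis.

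For the first identity, I would set $y=\Psi(x)$. The right-hand side becomes $\sigma\cdot\omega(x,\Psi(x))$, and I need to evaluate $\omega(x,\Psi(x))$. This is already essentially done inside the proof of Proposition~\ref{prep-1}\eqref{prep-1-5}: there one reads off $\Omega(v)e=\tfrac{1}{4}\omega(v,\Psi(v))e$, so $\omega(x,\Psi(x))=4\Omega(x)$. Plugging in yields $4\sigma\Omega(x)$ as claimed.

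For the second identity, I would substitute $x=e_\beta$ and $y=\widehat{e}_\beta$ into the lemma and sum over $\beta$. Using that $B_\mu$ is symmetric in its two arguments, the left-hand side becomes exactly $\sum_{\alpha,\beta}\tr(B_\mu(e_\alpha,e_\beta)\circ B_\mu(\widehat{e}_\alpha,\widehat{e}_\beta))$, while the right-hand side is $\sigma\sum_\beta\omega(e_\beta,\widehat{e}_\beta)$. Here the only subtlety to flag is that $\{e_\alpha\}$ is a basis of $V_1$ \emph{as a real vector space}, so it has $2d_1$ elements (since $d_1=\dim_\CC V_1$); by the defining property $\omega(e_\alpha,\widehat{e}_\beta)=\delta_{\alpha\beta}$ of the dual basis, $\sum_\beta\omega(e_\beta,\widehat{e}_\beta)=2d_1$, giving the factor $2\sigma\cdot d_1$.

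There is no substantive obstacle: both identities are essentially trace-contractions of the polarized form of the preceding lemma. The only thing worth double-checking when writing it up is the bookkeeping between real and complex dimensions in the second identity, to make sure the factor $2$ appears correctly.
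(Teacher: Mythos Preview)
Your proposal is correct and is exactly the immediate argument the paper intends: the corollary is stated without proof precisely because both identities are obtained from the preceding lemma by the specializations $y=\Psi(x)$ (using $\omega(x,\Psi(x))=4\Omega(x)$ from Proposition~\ref{prep-1}) and $(x,y)=(e_\beta,\widehat e_\beta)$ summed over $\beta$ (using that the real basis has $2d_1$ elements, as also used in the proof of Proposition~\ref{prop:BS_calculations}(v)).
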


\begin{lemma}\label{SSF:lemma3}
	For $x\in V_1$ we have
	$$ \sum_\alpha\tr(B_\mu(\mu(x)e_\alpha,\widehat{e}_\alpha)\circ\mu(x)) = \tau\cdot\Omega(x), $$
	where
	$$ \tau = \begin{cases}\frac{1}{3}(d_1+8)^2&\mbox{if $\frakm$ is simple (i.e. $\frakg^\CC$ is not of type $A$, $B$ or $D$),}\\n^2+2n+12&\mbox{if $\frakg^\CC\simeq\sl(n,\CC)$,}\\n^2-8n+48&\mbox{if $\frakg^\CC\simeq\so(n,\CC)$.}\end{cases} $$
\end{lemma}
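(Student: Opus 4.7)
The plan is to reduce the sum $S := \sum_\alpha\tr(B_\mu(\mu(x)e_\alpha,\widehat e_\alpha)\circ\mu(x))$ to a scalar multiple of $\Omega(x)$ by combining the bilinear trace identity with the polarized Jordan identity~\eqref{eq:IdentityMuTau}. Focusing first on the case where $\frakm$ is simple, I use the trace normalization $\tr(B_\mu(a,b)\circ B_\mu(c,d)) = \mathcal C\,\omega(B_\mu(a,b)c,d)$ with $\mathcal C=(d_1+8)/6$ that was used in the proof of the preceding lemma; since $\mu(x)=B_\mu(x,x)$, this gives
\[
S = \mathcal C\sum_\alpha\omega\bigl(B_\mu(\mu(x)e_\alpha,\widehat e_\alpha)x,\,x\bigr).
\]
A preliminary observation: inspection of Proposition~\ref{prep-1-3} shows that the constant vector part of $\mu(v)$ lies in $V_0$, so $\mu(v)$ has no $\fraka$-component, and hence $\mu(v),\,B_\mu(y,z)\in\frakm$ act $\omega$-skew-symmetrically on $V_1$.

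A key pointwise identity needed repeatedly is $\mu(x)x=-3\Psi(x)$, which follows from $\mu(x)=\tfrac12\ad(\bar n_x)^2E$ and $\Psi(x)=\tfrac16\ad(\bar n_x)^3E$ via $3\Psi(x)=[\bar n_x,\mu(x)]$. Combined with $\omega(x,\Psi(x))=4\Omega(x)$ from the proof of Proposition~\ref{prep-1-5}, this yields $\omega(\mu(x)x,x)=-3\omega(\Psi(x),x)=12\,\Omega(x)$. Together with the basis expansion $\sum_\alpha\omega(\widehat e_\alpha,x)\,e_\alpha=-x$, this gives the scalar contraction identity $\sum_\alpha\omega(\mu(x)e_\alpha,x)\,\omega(\widehat e_\alpha,x)=-12\,\Omega(x)$, which will appear several times.

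To unpack $B_\mu(\mu(x)e_\alpha,\widehat e_\alpha)x$ I apply~\eqref{eq:IdentityMuTau} with $(a,b,c)=(\mu(x)e_\alpha,\widehat e_\alpha,x)$; after pairing with $\omega(\cdot\,,x)$ and summing over $\alpha$, the correction terms collapse to a net contribution of $-\tfrac34\sum_\alpha\omega(\mu(x)e_\alpha,x)\omega(\widehat e_\alpha,x)=9\,\Omega(x)$. The leading term $\sum_\alpha\omega(B_\mu(\mu(x)e_\alpha,x)\widehat e_\alpha,x)$ is handled by swapping the first two arguments of $B_\mu$ by symmetry and applying~\eqref{eq:IdentityMuTau} again with $(a,b,c)=(x,\mu(x)e_\alpha,\widehat e_\alpha)$; the three new correction terms cancel in pairs by antisymmetry of $\omega$. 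Using the $\omega$-skewness of $B_\mu(x,\widehat e_\alpha)$ and expanding $B_\mu(x,\widehat e_\alpha)x=\mu(x)\widehat e_\alpha+\tfrac34\omega(x,\widehat e_\alpha)x$ via a third application of~\eqref{eq:IdentityMuTau}, everything reduces to $\sum_\alpha\omega(\mu(x)e_\alpha,\mu(x)\widehat e_\alpha)=-\tr\mu(x)^2=-2(d_1+8)\Omega(x)$ by Lemma~\ref{SSF:lemma2}. Bookkeeping yields $(2d_1+7)\Omega(x)$ for the leading term; adding the $9\,\Omega(x)$ correction gives $2(d_1+8)\,\Omega(x)$ in total, so $S=\mathcal C\cdot 2(d_1+8)\,\Omega(x)=\tfrac13(d_1+8)^2\,\Omega(x)$, as claimed.

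For the non-simple cases $\frakg^{\mathbb C}\simeq\sl(n,\CC)$ and $\so(n,\CC)$, the uniform normalization $\mathcal C$ is unavailable because $\frakm$ splits into two ideals carrying their own invariant forms, so the above shortcut breaks down. The approach there mirrors the treatment of $\sigma$ in the preceding lemma: decompose $B_\mu=B_\mu^{(1)}+B_\mu^{(2)}$ along the two ideal summands, run the same three-step manipulation in each summand with its own trace constant, or equivalently plug in the explicit coordinate formulas for $\mu,\Psi,\Omega$ from \cite[Appendix~B]{Fra22}. The bookkeeping of two separate trace normalizations and the concrete component-wise verification is where I expect the main obstacle to lie, particularly because the two factors contribute asymmetrically to the quartic invariant $\Omega$.
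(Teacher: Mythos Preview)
Your argument for the simple case is correct: the three successive applications of~\eqref{eq:IdentityMuTau} and the bookkeeping all check out, and you land cleanly on $\tfrac{1}{3}(d_1+8)^2\Omega(x)$. The paper reaches the same endpoint by a shorter route: rather than first converting the trace into $\omega(B_\mu(\cdot,\cdot)x,x)$ and then unwinding with~\eqref{eq:IdentityMuTau}, it invokes \cite[Lemma~2.4.5]{Fra22}, which says directly that $\sum_\alpha B_\mu(Te_\alpha,\widehat e_\alpha)=\mathcal C(\frakm')\,T$ for $T$ in a simple ideal $\frakm'$. This collapses the sum to $\mathcal C(\frakm)\tr(\mu(x)^2)$ in one step when $\frakm$ is simple, and in the reducible cases it organizes the computation as $(\mathcal C(\frakm_1)-\mathcal C(\frakm_2))\tr(\mu(x)_{\frakm_1}\circ\mu(x))+\mathcal C(\frakm_2)\tr(\mu(x)^2)$, so only the small-factor piece needs a direct calculation. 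Your approach is more elementary in that it avoids citing an extra structural lemma and works purely from~\eqref{eq:IdentityMuTau} and Lemma~\ref{SSF:lemma2}; the cost is the longer three-step reduction. For the types $A$ and $D$ cases your sketch is accurate in spirit and matches what the paper does, namely fall back on the explicit formulas in \cite[Appendix~B]{Fra22}.
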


\begin{proof}
	Write $\mu(x)=\sum_{\frakm'}\mu(x)_{\frakm'}$, where the summation is over all simple factors $\frakm'$ of $\frakm$ and $\mu(x)_{\frakm'}\in\frakm'$. By \cite[Lemma 2.4.5]{Fra22} we find
	$$ \sum_\alpha\tr(B_\mu(\mu(x)e_\alpha,\widehat{e}_\alpha)\circ\mu(x)) = \sum_{\frakm'}\mathcal{C}(\frakm')\cdot\tr(\mu(x)_{\frakm'}\circ\mu(x)). $$
	If $\frakm$ is simple, this equals $\calC(\frakm)\tr(\mu(x)^2)$ which can be evaluated using Lemma~\ref{SSF:lemma2} and \cite[Lemma 5.3.3]{Fra22}. In the other two cases, $\frakm=\frakm_1\oplus\frakm_2$ with $\frakm_1$ ``small'' and $\frakm_2$ ``large''. Then
	$$ = (\calC(\frakm_1)-\calC(\frakm_2))\tr(\mu(x)_{\frakm_1}\circ\mu(x)) + \calC(\frakm_2)\tr(\mu(x)^2). $$
	The second term is again evaluated with Lemma~\ref{SSF:lemma2}, and the first one can be computed directly for the two cases.
\end{proof}

\section{Evaluation of some hypergeometric series}

\begin{proposition}\label{prop:SpecialValue3F2a}
	For $a\in\CC$ and $m,n\in\NN$ with $m\geq n-1$ we have
	$$ \pFq{3}{2}{a,n,2a+n+m}{2a+m+1,a+n+m+1}{1} = \frac{\sqrt{\pi}\Gamma(2a+m+1)\Gamma(\frac{m-n+2}{2})\Gamma(a+m+n+1)}{2^{2a+m+n}\Gamma(a+m+1)\Gamma(\frac{m+n+2}{2})\Gamma(\frac{2a+m-n+2}{2})\Gamma(\frac{2a+m+n+1}{2})}. $$
\end{proposition}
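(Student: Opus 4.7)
The plan is to recognize the series as a well-poised $_3F_2(1)$ to which Dixon's summation theorem applies. Recall Dixon's identity
$$\pFq{3}{2}{A,B,C}{1+A-B,\,1+A-C}{1}=\frac{\Gamma(1+\tfrac{A}{2})\Gamma(1+A-B)\Gamma(1+A-C)\Gamma(1+\tfrac{A}{2}-B-C)}{\Gamma(1+A)\Gamma(1+\tfrac{A}{2}-B)\Gamma(1+\tfrac{A}{2}-C)\Gamma(1+A-B-C)},$$
valid whenever $\Re(1+\tfrac{A}{2}-B-C)>0$. Setting $A=2a+n+m$, $B=a$, $C=n$, one checks $1+A-B=a+n+m+1$ and $1+A-C=2a+m+1$, so the lower parameters of Dixon's series coincide with those of the proposition, and the convergence hypothesis reduces to $\tfrac{m-n+2}{2}>0$, which holds under the assumption $m\geq n-1$.

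Next, I will simplify Dixon's right-hand side using the substitutions
$$1+A-B-C=a+m+1,\quad 1+\tfrac{A}{2}=a+\tfrac{m+n+2}{2},\quad 1+\tfrac{A}{2}-B=\tfrac{m+n+2}{2},$$
$$1+\tfrac{A}{2}-C=\tfrac{2a+m-n+2}{2},\quad 1+\tfrac{A}{2}-B-C=\tfrac{m-n+2}{2},\quad 1+A=2a+n+m+1.$$
After this rewriting, every factor in the claimed right-hand side of the proposition appears, and the only remaining discrepancy is the quotient $\Gamma(a+\tfrac{m+n+2}{2})/\Gamma(2a+n+m+1)$, which must equal $\sqrt{\pi}/(2^{2a+m+n}\Gamma(\tfrac{2a+m+n+1}{2}))$.

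Finally, the Legendre duplication formula $\Gamma(2z)=2^{2z-1}\pi^{-1/2}\Gamma(z)\Gamma(z+\tfrac{1}{2})$ applied at $z=\tfrac{2a+m+n+1}{2}$ yields
$$\Gamma(2a+n+m+1)=2^{2a+m+n}\pi^{-1/2}\Gamma\big(\tfrac{2a+m+n+1}{2}\big)\Gamma\big(a+\tfrac{m+n+2}{2}\big),$$
and substituting this back and cancelling the common factor $\Gamma(a+\tfrac{m+n+2}{2})$ produces the stated closed form. The argument is essentially mechanical; the only substantive step is noticing that the series is well-poised in Dixon's sense, so that no Thomae transformation, quadratic identity, or analytic continuation is required.
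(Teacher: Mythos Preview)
Your proof is correct and considerably more direct than the paper's. The paper does not recognize the series as well-poised; instead it applies two successive Thomae-type transformations (\cite[Corollary~3.3.6]{AAR99} followed by \cite[Corollary~3.3.5]{AAR99}) to bring the $_3F_2$ into a form that can be evaluated by \cite[Theorem~3.5.5~(ii)]{AAR99}, and then still needs the duplication formula to reach the stated closed form. Your observation that with $A=2a+n+m$, $B=a$, $C=n$ the lower parameters are exactly $1+A-B$ and $1+A-C$ short-circuits all of this: Dixon's summation applies in one step, and the convergence condition $\Re(1+\tfrac{A}{2}-B-C)=\tfrac{m-n+2}{2}>0$ is precisely the hypothesis $m\ge n-1$. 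The paper's route has the mild advantage of staying within a single reference and not requiring the reader to spot the well-poised structure, but your argument is shorter, more transparent, and explains why the hypothesis $m\ge n-1$ is the natural one.
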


\begin{proof}
	By \cite[Corollary~3.3.6]{AAR99} we have
	\begin{multline*}
		\pFq{3}{2}{2a+n+m,a,n}{2a+m+1,a+n+m+1}{1} = \frac{\Gamma(2a+m+1)\Gamma(a+n+m+1)\Gamma(m-n+2)}{\Gamma(2a+n+m)\Gamma(a+m-n+2)\Gamma(m+2)}\\
		\times\pFq{3}{2}{1-n,1-a,m-n+2}{a+m-n+2,m+2}{1}.
	\end{multline*}
	After an application of \cite[Corollary~3.3.5]{AAR99}, the expression becomes
	$$ = \frac{\Gamma(2a+m+1)\Gamma(m-n+2)}{\Gamma(a+m-n+2)\Gamma(a+m+1)}\pFq{3}{2}{1-a,a,a+m+1}{a+m-n+2,a+m+n+1}{1}. $$
	This expression can be evaluated with \cite[Theorem~3.5.5~(ii)]{AAR99}:
	$$ =\frac{\pi\Gamma(2a+m+1)\Gamma(m-n+2)\Gamma(a+m+n+1)}{2^{2a+2m+1}\Gamma(a+m+1)\Gamma(\frac{m-n+3}{2})\Gamma(\frac{m+n+2}{2})\Gamma(\frac{2a+m-n+2}{2})\Gamma(\frac{2a+m+n+1}{2})}. $$
	The claimed formula follows with the duplication formula for the Gamma function.
\end{proof}

\begin{proposition}\label{prop:SpecialValue3F2b}
	Let $a\in\CC$ and $m,n\in\NN$, then
	$$ {_3F_2}\left(\begin{array}{c}a,n,2a+n+m\\2a+n,a+n+m+1\end{array};1\right) = \frac{m!(a+m+1)_n(2a)_n}{(n+m)!(a)_n}. $$
\end{proposition}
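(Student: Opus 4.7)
The series on the left is Saalschützian: the difference of lower parameter sum and upper parameter sum is $(2a+n)+(a+n+m+1)-a-n-(2a+n+m)=1$. Since $m\in\NN$ (so $-m$ is a non-positive integer), my plan is to apply Thomae's transformation with the upper parameter $2a+n+m$ distinguished, so as to convert the non-terminating Saalschützian ${}_3F_2$ into a terminating series. A direct computation gives
$${}_3F_2\!\left(\begin{array}{c}a,n,2a+n+m\\2a+n,a+n+m+1\end{array};1\right) = \frac{\Gamma(2a+n)\Gamma(a+n+m+1)}{\Gamma(2a+n+m)\Gamma(n+1)\Gamma(a+1)}\,{}_3F_2\!\left(\begin{array}{c}-m,1-a,1\\n+1,a+1\end{array};1\right),$$
where the new series terminates after $m+1$ terms on account of the factor $(-m)_k$.

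The next step is to evaluate the terminating series $T := {}_3F_2(-m,1-a,1;n+1,a+1;1)$. Because $(1)_k/(n+1)_k = n!/(n+k)!$, one obtains a factor $1/(n+k)$ in each term, which can be represented by the Euler integral $1/(n+k)=\int_0^1 t^{n+k-1}\,dt$. This converts the finite sum into
$$T = n\int_0^1 t^{n-1}\,{}_2F_1(-m,1-a;a+1;t)\,dt,$$
where the integrand is a polynomial of degree $m$ in $t$. Expanding and integrating term by term against the Beta kernel produces an expression involving only gamma functions. Alternatively, one could apply a second Thomae transformation to bring $T$ into a Saalschützian form and then invoke Saalschütz's theorem directly; either route should yield the same closed form.

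The hardest part of the argument will be the final combinatorial matching: once the prefactor from the first Thomae step is combined with the closed-form value of $T$, one must show that the resulting product of gamma functions collapses to $\frac{m!(a+m+1)_n(2a)_n}{(n+m)!(a)_n}$. The appearance of the factor $(2a)_n/(a)_n$ in the answer (rather than the factored form suggested by the duplication formula) indicates that a careful cancellation among the Pochhammer symbols must take place in this last step; I expect this to be the part of the proof that requires the most bookkeeping, and a potential source of a typographical factor if one is not very careful.
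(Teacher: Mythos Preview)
Your first step is a correct instance of Thomae's two-term relation (distinguishing the parameter $2a+n+m$ and using that the parametric excess equals $1$), and it is \emph{different} from the paper's route.  The paper does not pass through ${}_3F_2(-m,1-a,1;n+1,a+1;1)$ at all: it first applies \cite[Corollary~3.3.5]{AAR99} to reach ${}_3F_2(a,2a,-m;2a+n,a+1;1)$, then applies \cite[Corollary~3.3.4]{AAR99} to produce a terminating ${}_3F_2$ in which an upper and a lower parameter coincide, so that it collapses to a ${}_2F_1$ which is summed by Chu--Vandermonde.  No integrals enter; the argument is a short chain of tabulated identities.

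That said, what you have submitted is an outline, not a proof.  The evaluation of the terminating series $T={}_3F_2(-m,1-a,1;n+1,a+1;1)$ is the entire content of the argument after the Thomae step, and you do not carry it out: saying that ``either route should yield the same closed form'' and that the final matching is ``a potential source of a typographical factor'' is exactly where a proof must be supplied.  There is also a slip in the integral representation: $(1)_k/(n+1)_k=k!\,n!/(n+k)!$, not $n!/(n+k)!$, so one does \emph{not} simply get a factor $1/(n+k)$.  Using the Euler integral for ${}_3F_2$ one actually obtains
\[
T=n\int_0^1 (1-t)^{n-1}\,{}_2F_1(-m,1-a;a+1;t)\,dt,
\]
with kernel $(1-t)^{n-1}$ rather than $t^{n-1}$.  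This integral (or, alternatively, a second transformation bringing $T$ to Saalsch\"utzian form) can indeed be evaluated in closed form, and your route would then constitute a genuine alternative to the paper's transformation-chain argument --- but until you actually perform that evaluation and match the Pochhammer symbols, the proposal is incomplete.
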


We note that the generalized hypergeometric function in this identity is balanced.

\begin{proof}
	By \cite[Corollary 3.3.5]{AAR99} we obtain
	$$ {_3F_2}\left(\begin{array}{c}a,n,2a+n+m\\2a+n,a+n+m+1\end{array};1\right) = \frac{\Gamma(a+n+m+1)}{\Gamma(n+m+1)\Gamma(a+1)}{_3F_2}\left(\begin{array}{c}a,2a,-m\\2a+n,a+1\end{array};1\right), $$
	and using \cite[Corollary 3.3.4]{AAR99}
	$$ {_3F_2}\left(\begin{array}{c}-m,a,2a\\2a+n,a+1\end{array};1\right) = \frac{(a+n)_m(1)_m}{(2a+n)_m(a+1)_m}{_3F_2}\left(\begin{array}{c}-m,a,-n-m\\-a-n-m+1,-m\end{array};1\right). $$
	This generalized hypergeometric function simplifies to a classical hypergeometric function which is evaluated using the Chu--Vandermonde identity (see. e.g. \cite[Corollary 2.2.3]{AAR99})
	$$ {_3F_2}\left(\begin{array}{c}-m,a,-n-m\\-a-n-m+1,-m\end{array};1\right) = {_2F_1}\left(\begin{array}{c}-n-m,a\\-a-n-m+1\end{array};1\right) = \frac{(-2a-n-m+1)_{n+m}}{(-a-n-m+1)_{n+m}}. $$
	Putting these three identities together shows the claim.
\end{proof}

\section{Tables}\label{app-b}

We give a list of $G/K$
and the corresponding
projective
space  $\mathbb P(K/L) 
=
K/L_0$  as  compact Hermitian symmetric space; see \cite{Hel00,Loo77}.
The compact dual of
a noncompact Hermitian symmetric space $D$
is denoted by $D^\ast$. 

\begin{table}[!h]
	\begin{center}
		\begin{tabular}{|l|l|l|l|}
			\hline
			$D=G/K$ & $\frakg$ & $\frakk$ & $\frakm$\\
			\hline
			$I_{r+b, r}$ & $\su(r+b,r)$ & $\fraks(\fraku(r+b)\oplus \fraku(r))$ & $\fraku(r+b-1,r-1)$\\ 
			\hline
			$II_{2r}$ & $\so^\ast(4r)$ & $ \fraku(2r)$ & $\so^\ast(4r-4)\oplus \su(2)$\\ 
			\hline
			$II_{2r+1} $ & $ \so^\ast (4r+2)$ & $ \fraku(2r+1)$ & $\so^\ast(4r-2)\oplus \su(2)$\\
			\hline
			$III_{r}$ & $\sp(r, \mathbb R)$ & $\fraku(r)$ & $\sp(r-1,\mathbb R)$\\
			\hline
			$IV_{n}, n>4\,(r=2)$ & $\so(n, 2)$ & $\so(n)\oplus \so(2)$ & $\so(n-2)\oplus \sl(2,\RR)$\\ 
			\hline  
			$V\,(r=2)$ & $\frake_{6(-14)}$ & $\spin(10)\oplus \so(2)$ & $\su(1,5)$\\ 
			\hline
			$VI\,(r=3)$ & $\frake_{7(-25)}$ & $\frake_6 \oplus \so(2)$ & $\so(2,10)$\\ 
			\hline  
		\end{tabular}
		\vskip0.20cm
		\caption
		{Non-compact Hermitian symmetric spaces $D=G/K$ with associated Lie algebras $\frakg$, $\frakk$ and $\frakm$}
		\label{tab:1}
	\end{center}
\end{table}

\begin{table}[!h]
	\begin{center}
		\begin{tabular}
			{|l|l|l|l|l|l|}
			\hline
			$D=G/K$ & $K/L_0$ & $(a, b)$ & $d$ & $(a_1, b_1)$ & $d_1$\\
			\hline
			$I_{r+b, r}$ & $I_{r+b-1}^*\times I_{r-1}^*$ & $(2, b)$ & $r(r+b)$ & $ (0, r+b-2), (0, r-2)$ & $2r+b-2$\\ 
			\hline 
			$II_{2r}$ & $I^\ast_{2, 2r-2}$ & $(4, 0) $ & $r(2r-1)$ & $(2, 2r-4)$ & $4r-4$\\
			\hline
			$II_{2r+1}$ & $I^\ast_{2, 2r-1}$ & $(4, 2)$ & $r(2r+1)$ & $(2,2r-3)$ & $4r-2$\\
			\hline
			$III_{r}$ & $I_{r-1}^*$ & $(1,0)$ & $\frac{1}{2}r(r+1)$ & $ (0, r-2)$ & $r-1$\\
			\hline
			$IV_{n}, n>4$ & $IV_{n-2}^\ast $ & $(n-2,0)$ & $n$ & $(n-4, 0)$ & $n-2$\\ 
			\hline
			$V$ & $II_{5}^\ast$ & $(6,4)$ & $16$ & $(4, 2)$ & $10$\\
			\hline
			$VI$ & $V^\ast$ & $(8, 0)$ & $27$ & $(6, 4)$ & $16$\\
			\hline  
		\end{tabular}
		\vskip0.20cm
		\caption
		{The   compact Hermitian symmetric spaces
			$\mathbb P(K/L)=K/L_0$. For type I domain $I_{r, r+b}$, $r\ge 2$,
			$\mathbb P(K/L)$ is a product $\mathbb P^{r-1}\times
			\mathbb P^{r+b-1} $
			of projective spaces with the corresponding $(a_1, b_1)$
			being $(0, r+b-2), (0, r-2)$  for each factor.
		}
		\label{tab:2}
	\end{center}
\end{table}

\providecommand{\bysame}{\leavevmode\hbox to3em{\hrulefill}\thinspace}
\providecommand{\MR}{\relax\ifhmode\unskip\space\fi MR }
\providecommand{\MRhref}[2]{
	\href{http://www.ams.org/mathscinet-getitem?mr=#1}{#2}
}
\providecommand{\href}[2]{#2}

\end{document}